\documentclass[11pt]{amsart}
\usepackage[applemac]{inputenc}
\usepackage{
 ulem,
 amsfonts}
 \usepackage{amsmath}
\usepackage{amsmath,esint, amssymb, amsthm,mathbbol,upgreek,exscale,bigints}

 \usepackage{mathtools}
 
\usepackage{fullpage}
\usepackage{
 ulem,
 amsfonts}
 \usepackage{amsmath}
\usepackage{amsmath,esint, amssymb, amsthm,mathbbol,upgreek,exscale}

 \usepackage{mathtools}

\usepackage[colorlinks=true, pdfstartview=FitV, linkcolor=blue, citecolor=blue,
 urlcolor=blue]{hyperref}
\usepackage{color}

\date{}

\newcommand\N{\mathbb{N}} 
\newcommand\R{\mathbb{R}} 
\newcommand\E{\varepsilon} 


\newcommand\EE{\varepsilon}



\theoremstyle{plain}

\numberwithin{equation}{section}
\newtheorem{theorem}{Theorem}[section]
\newtheorem{proposition}[theorem]{Proposition}
\newtheorem{lemma}[theorem]{Lemma}
\newtheorem{remark}[theorem]{Remark}
\newtheorem*{remarks}{Remarks}

\usepackage[textmath,displaymath,floats,graphics,delayed]{preview}
\sloppy

 \parindent 0pt
 \title[]{Dynamics of one fold symmetric patches for the aggregation equation and collapse to singular measure}

\author[T. Hmidi]{Taoufik Hmidi}
\address{Univ Rennes, CNRS, IRMAR - UMR 6625, F-35000 Rennes, France}
\email{thmidi@univ-rennes1.fr}

\author[D. Li]{Dong Li}
\address{ 
Department of Mathematics\\
The Hong Kong University of Science and Technology\\
 Clear Water Bay, Kowloon\\
 Hong kong}
\email{madli@ust.hk}


 \begin{document}

\begin{abstract}
We are concerned with the dynamics of one fold symmetric patches  for the two-dimensional  aggregation equation associated to the Newtonian potential. We reformulate a suitable   graph model  and prove a local well-posedness  result  in sub-critical and critical spaces. The global existence  is obtained only for small initial data using a weak damping property hidden in the velocity terms. This allows to analyze the concentration phenomenon of the aggregation patches near the blow up time. In particular, we prove that the patch collapses to a collection of disjoint segments and we provide a description of the singular measure through a careful study of the asymptotic behavior of the graph. 
 \end{abstract}

\newpage 

\maketitle{}
\tableofcontents
%
\section{Introduction}
This paper is devoted to the study of the two-dimensional aggregation equation with the Newtonian potential:

\begin{equation}\label{aggreg1}
\left\lbrace
\begin{array}{l}
\partial_t \rho+\textnormal{div}(v\, \rho)=0,\, t\geq0, x\in\R^2, \\
v(t,x)=-\frac{1}{2\pi}\displaystyle{\int_{\R^2}}\frac{x-y}{|x-y|^2}\rho(t,y) dy,\\
\rho(0,x)=\rho_0(x).
\end{array}
\right.
\end{equation}

This model with more general  potential interactions,  with or without dissipation, is used to explain some behavior  in physics and population dynamics. As a matter of fact  it appears in  vortex densities in superconductors \cite{AS,DZ,Kel-Seg}, material sciences \cite{HP,NPS01}, cooperative controls and  biological swarming \cite{BT,Bred,BCMo,GP,ME,MCO,TB}, etc... During the last few decades, a lot of intensive research activity has been devoted to explore several mathematical and numerical aspects of this equation. 
It is known according to \cite{BLL,NPS01} that classical solutions can be constructed for short time. They develop finite time singularity if and only if the initial data is strictly positive at  some points and the blow up time is explicitly given by $T_\star=\frac{1}{\max\rho_0}$. This follows from the equivalent form
$$
\partial_\rho+v\cdot\nabla\rho=\rho^2
$$
which, written with Lagrangian coordinates, gives   exactly a Riccati equation. Note that similarly to Yudovich result for  Euler equations \cite{Yu}, weak  unique solutions in $L^1\cap L^\infty$ can be constructed following the same strategy, for more details  see \cite{BLL,BL,BB10,BCL,BLR,FHK,FH,HJD,L,LR2}. Since $L^1$ norm is conserved at least at the formal level, then lot of efforts were done in order to extend the classical solutions beyond the first blowup time. In \cite{Poup}, Poupaud established the existence of global  generalized  solutions with defect measure when the initial data is a non negative  bounded  Radon  measure. He also showed that when the second moment of the initial data is bounded then for such solutions atomic part appears in finite time. This result is at some extent  in contrast with what is established for Euler equations. Indeed, according to Delort's result \cite{Delort} global weak solutions without defect measure can be  established when the initial vorticity  is a non negative  bounded  Radon  measure and the associated velocity has finite local energy. During the time those solutions do not develop atomic part contrary to the aggregation equation. This  illustrates somehow the gap  between both equations not only at the level of classical solutions but also for the weak solutions. The literature dealing with   measure valued solutions for the aggregation equation  with different potentials is very abundant and we we refer the reader to the papers \cite{BV,CDFLS,CCoV,CRosa, Masmoudi} and the references therein. 

Now we shall discuss another   subject  concerning  the aggregation patches. Assume that the initial data takes the patch form
$$
\rho_0={\bf{1}}_{D_0}
$$
with $D_0$ a bounded domain, then solutions can be  uniquely constructed up to the time $T^\star=1$ and  one can check that
$$
\rho(t)=\frac{1}{1-t}{\bf{1}}_{D_t}\quad \hbox{with}\quad (\partial_t+v\cdot\nabla){{\bf{1}}_{D_t}}=0.
$$
Note that $v$ is computed from $\rho$ through Biot-Savart law. To filter the time factor in the velocity field and find analogous equation to Euler equations it is more convenient  to rescale the time  as it was done in \cite{BLL}. Indeed, set 
$$\tau=-\ln(1-t), \quad u(\tau,x)= -\frac{1}{2\pi}\displaystyle{\int_{\R^2}}\frac{x-y}{|x-y|^2}{\bf{1}}_{\tilde{D}_\tau}(y) dy,\quad  \tilde{D}_\tau=D_t
$$
then we get
$$
(\partial_\tau+u\cdot\nabla){\bf{1}}_{\tilde{D}_\tau}=0, \tilde{D}_0=D_0.
$$
We observe  that with this formulation,  the blow up occurs at infinity and so the solutions do exist globally in time.
To alleviate the notations we  shall write this latter equation  with  the initial variables. Hence the vortex patch problem reduces to understand the evolution  equation
\begin{equation}\label{sqg}
\left\lbrace
\begin{array}{l}
\partial_t \rho+v \cdot\nabla\rho=0,\, t\geq0,\\
v(t,x)=-\frac{1}{2\pi}\displaystyle{\int_{D_t}}\frac{x-y}{|x-y|^2} dy,\\
\rho(0)={\bf{1}}_{D_0}.
\end{array}
\right.
\end{equation}
Let us point out that the area of the domain $D_t$ shrinks to zero exponentially, that is,
\begin{equation}\label{Exp1}
 \forall\, t\geq0,\quad \|\rho(t)\|_{L^1}= e^{-t}|D_0|.
\end{equation}
The solution to this problem is global in time and takes the form $\rho(t)={\bf{1}}_{D_t}, D_t=\psi(t,D_0)$ where $\psi$ denotes the flow associated to the velocity $v$. Similarly to Euler equations \cite{BC,Chem1}, Bertozzi, Garnett, Laurent and Verdera proved in \cite{BGV} the global in time persistence of the boundary regularity in H\"{o}lder spaces $C^{1+s}, s\in(0,1)$. However the asymptotic behavior  of the patches for large time  is still not well-understood despite some interesting numerical simulations giving some indications on  the concentration dynamics. Notice first that the area of the patch shrinks to zero which entails  that  the associated domains will converge in Hausdorff distance to  negligible sets.  The geometric structure of such sets is not well explored and hereafter we will give   two pedagogic and interesting simple examples illustrating the concentration, and one can find more details in \cite{BLL}. The first example  is the disc which shrinks to its center leading after normalization  procedure to the convergence to Dirac mass. The second one is the ellipse patch which collapses  to a segment along  the big axis and the normalized patch converges weakly to Wigner's semicircle law of density
$$
x_1\mapsto \frac{2\sqrt{{\bf{x_0}}^2-x_1^2}}{\pi {\bf{x_0}}^2}{\bf{1}_{[-x_0,x_0]}},\, {\bf{x_0}}=a-b.
$$
It seems that  the mechanisms governing   the concentration are  very complexe  and  related in part for some special class to the initial distribution of the local mass. Indeed,  the numerical experiments implemented  in \cite{BLL} for some  regular shapes indicate that generically   the concentration is organized along a skeleton structure. 
The aim of this  paper is to investigate  this phenomenon and try  to give  a complete answer for special class of initial data   where the concentration occurs along disjoint segments lying in the same line. More precisely, we will deal with a one-fold symmetric patch, and by rotation invariant we can suppose that it coincides with the real axis. We assume in addition  that the boundary of the upper part is the graph of a  slightly smooth function with small amplitude. Then we will show that we can track the dynamics of the graph globally in time
and  prove that the normalized solution   converges   weakly towards a probability measure supported in the union of disjoint segments lying in the real axis. The results will be formulated rigorously in Section \ref{Sec-graph}. The paper is organized as follows. In next section we formulate the graph equation and state our main results. In Sections 3 and 4 we shall discuss basic tools that we use frequently throughout the paper. In Section 5 we prove the local well-posedness for the graph equation. The global existence with small initial data is proved in Section 6. The last section deals with the asymptotic behavior of the normalized density and its convergence towards a singular measure. 
\section{Graph reformulation and main results}\label{Sec-graph}

%
%

The main purpose of this section  is to describe   the boundary motion of the patch associated to the equation \eqref{sqg} under suitable symmetry structure. One of the basic properties of the aggregation equation that we shall use in a crucial way  concerns its group of symmetry which is much more rich than Euler equations. Actually and in  addition to rotation and translation invariance,  the aggregation equation is in fact invariant by reflexion. 
To check this property and without loss of generality we can look for the invariance with respect to the real axis. Set
  $$
 X=(x,y)\in\R^2\quad\hbox{and}\quad \overline{X}=(x,-y)
 $$ 
 and  introduce 
 $$
 \widehat{\rho}(t,X)=\rho(t,\overline{X}),\quad \widehat{v}(t,X)=-\frac{1}{2\pi}\displaystyle{\int_{\R^2}}\frac{X-Y}{|X-Y|^2}\widehat{\rho}(t,Y) dY.
 $$
Using straightforward change of variables, it is quite easy to get
$$
v(t,X)=\overline{ \widehat{v}(t,\overline{X})},\quad \big[v\cdot\nabla\rho\big](t,X)=\big[\widehat{v}\cdot\nabla \widehat{\rho}\big](t,\overline{X}).
$$
Therefore we find that $\widehat{\rho}$ satisfies also the aggregation equation
$$
\partial_t\widehat{\rho}+\widehat{v}\cdot\nabla \widehat{\rho}=0.
$$
Combining this property with  the uniqueness of Yudovich solutions, it follows that  if  the initial data belongs to $ L^1\cap L^{\infty}$ and admits an axis of symmetry   then  the solution remains invariant with respect to the same axis. In the framework of the vortex patches this result means that if  the  initial data is given by $\rho_0={\bf{1}}_{D_0}$ and the domain $D_0$ is symmetric   with respect to the real axis, the domain $D_t$ defining the solution $\rho(t)={\bf{1}}_{D_t}$ remains  symmetric  with respect to the same axis for any positive time. Recall that in the form \eqref{sqg} Yudovich type solutions are global in time. To be precise about   the terminology, here and contrary to the standard definition of domain in topology which means a connected open set, we  mean  by domain any measurable set of strictly positive measure. In addition, a patch whose domain is symmetric with respect to the real axis (or any axis)  is said one fold symmetric.

Along the current study, we shall focus on  the domains $D_0$ such   that  the  boundary part lying  in the upper half-plane  is   described by the graph of a  $C^1$ positive function $f_0:\R\to \R_+$ with compact support. This is equivalent to say
$$
D_0=\Big\{(x,y)\in\R^2;\, x\in {\textnormal{supp }f_0},\, -f_0(x)\leq y\leq f_0(x)\Big\}.
$$  
We point out that concretely  we shall consider the evolution not of $D_0$ but  of its   extended set defined by
$$
\widehat{D}_0=\Big\{(x,y)\in\R^2;\, x\in\R,\, -f_0(x)\leq y\leq f_0(x)\Big\}.
$$
This does not matter since  the domain $D_t$   remains symmetric with the respect to the real  axis and  then  we can  simply track  its  evolution  by knowing  the dynamics of its extended domain: we just remove the extra lines located on the real axis.

 One of the main objective of this paper  is to follow the dynamics of the graph and investigate local and global well-posedness issues in different function spaces.  In the next lines, we shall   derive  the evolution equation governing the motion  of the initial graph $f_0$. Assume that in a short time interval $[0,T]$ the part of the boundary in the upper half-plane is described by  the graph of a $C^1-$function $f_t:\R\to \R_+$. This forces   the points of the boundary  of $\partial D_t$ located  on the real axis  to be  cusp  singularities.  As a material point located at the boundary remains on the boundary then any parametrization $s\mapsto\gamma_t(s)$ of the boundary should satisfy 
 $$
 \big(\partial_t\gamma_t(s)-v(t,\gamma_t(s))\big)\cdot\vec{n}(\gamma_t(s))=0,
 $$
 with $\vec{n}(\gamma_t)$ being a normal unit vector to the boundary at the point $\gamma_t(s)$. Now take the parametrization in the graph form $\gamma_t: x\mapsto \big(x, f(t,x)\big)$, then the preceding   equation reduces to the nonlinear transport equation
\begin{equation}\label{graph1}
\left\lbrace
\begin{array}{l}
\partial_t f(t,x)+u_1(t,x)\partial_x f(t,x)=u_2(t,x), \, t\geq0, x\in \R\\
f(0,x)=f_0(x),
\end{array}
\right.
\end{equation}
where $(u_1,u_2)(t,x)$ is the velocity $(v_1,v_2)(t,X)$ computed at the point $X=(x,f(t,x))$. Sometimes and along  this paper we use the following notations
$$
f_t(x)=f(t,x)\quad \hbox{and}\quad f^\prime(t,x)=\partial_xf(t,x).
$$
To reformulate  the equation \eqref{graph1} in a closed form we shall recover the velocity components with respect to the graph parametrization.
We start with the  computation of $v_1(X).$ Here and for the sake of  simplicity we drop the time parameter from the graph and the domain of the patch.
One writes according to Fubini's theorem 
\begin{eqnarray*}
-2\pi v_1(X)&=&\bigintss_{D}\frac{x-y_1}{|X-Y|^2} dY,\quad Y=(y_1,y_2)\\
&=&\bigintss_{\R}(x-y_1)\bigintss_{-f(y_1)}^{f(y_1)}\frac{dy_2}{(x-y_1)^2+(f(x)-y_2)^2} dy_1.
\end{eqnarray*}
Using the change of variables $y_2-f(x)=(x-y_1) Z$ we find
\begin{eqnarray*}
2\pi v_1(X)&=&\bigintss_{\R}\Bigg\{\arctan\Big(\frac{f(y)-f(x)}{y-x}\Big)+\arctan\Big(\frac{f(y)+f(x)}{y-x}\Big)\Bigg\} dy\\
&=&\bigintss_{\R}\Bigg\{\arctan\Big(\frac{f(x+y)-f(x)}{y}\Big)+\arctan\Big(\frac{f(x+y)+f(x)}{y}\Big)\Bigg\} dy.
\end{eqnarray*}
To  compute  $v_2$ in terms of $f$ we proceed as before and we find 
\begin{eqnarray*}
-2\pi v_2(X)&=&\bigintss_{D}\frac{f(x)-y_2}{|X-Y|^2} dA(Y)\\
&=&\bigintss_{\R}\bigintss_{-f(y_1)}^{f(y_1)}\frac{f(x)-y_2}{(x-y_1)^2+(f(x)-y_2)^2} dy_2 dy_1.
\end{eqnarray*}
Therefore we obtain the following expression
\begin{equation*}
4\pi v_2(x,f(x))=\bigintss_{\R} \log\Bigg(\frac{y^2+\big( f(x+y)-f(x)\big)^2}{y^2+\big(f(x+y)+f(x)\big)^2} \Bigg)dy.
\end{equation*}
With the notation adopted before for $(u_1,u_2)$ we finally get the formulas 
\begin{eqnarray}\label{fields6}
\nonumber u_1(t,x)&=&\frac{1}{2\pi}\bigintss_{\R}\Bigg\{\arctan\Big(\frac{f_t(x+y)-f_t(x)}{y}\Big)+\arctan\Big(\frac{f_t(x+y)+f_t(x)}{y}\Big)\Bigg\} dy\\
\qquad \qquad \qquad&& u_2(t,x)=\frac{1}{4\pi}\bigintss_{\R} \log\Bigg(\frac{y^2+\big( f_t(x+y)-f_t(x)\big)^2}{y^2+\big(f_t(x+y)+f_t(x)\big)^2} \Bigg)dy.
\end{eqnarray} 

We emphasize  that  for the coherence of the model   the graph equation \eqref{graph1} is supplemented with the initial condition $f_0(x)\geq0$. According to   Proposition \ref{prop20},  the positivity is preserved for enough smooth solutions. Furthermore,  and once again according to  this proposition we have a maximum principle estimate :
$$
\forall t\geq0,\,\forall x\in\R,\quad 0\leq f(t,x)\leq \|f_0\|_{L^\infty}.
$$
Notice that the model remains meaningful   even though  the function $f_t$ changes the sign. In this case the geometric domain of the patch is simply obtained by looking to the region delimited by the curve  of $f_t$ and its symmetric with respect to the real axis. This is also equivalent to deal with  positive function $f_t$ but its graph will be less regular and belongs only to the Lipschitz class.
Another essential element that will be analyzed later in Proposition \ref{prop20} concerns the support of the solutions which remains confined through the time. More precisely, if $\hbox{supp}f_0\subset [a,b]$ with $a<b$ then provided that the graph exists for  $t\in[0,T]$ one has
$$
\hbox{supp} f(t)\subset [a,b].
$$
This follows from the fact that the flow associated to the horizontal velocity $u_1$ is contractive on the boundary. 
It is not clear whether global weak solutions satisfying the maximum principle can be constructed. However, to deal with classical solutions one should control higher regularity of the graph and it seems from the transport structure of the equation  that the  optimal scaling for local well-posedness theory  is Lipschitz class. Denote by $g(t,x)=\partial_xf(t,x)$ the slope of the graph then  it is quite obvious from \eqref{graph1} that
 
 \begin{equation}\label{graphder}
 \partial_t g+u_1\partial_x g=-\partial_x u_1g+\partial_x u_2.
\end{equation}
For the computation of the source term we proceed in a classical way using the differentiation under the integral sign and we get  successively,

 \begin{eqnarray}\label{Derr}
  \nonumber2\pi\partial_x u_1(x)&=&\textnormal{p.v.}\bigintss_{\R}\frac{f^\prime(x+y)-f^\prime(x) }{y^2+(f(x+y)-f(x))^2} ydy\\
&+&\textnormal{p.v.} \bigintss_{\R}\frac{f^\prime(x+y)+f^\prime(x) }{y^2+(f(x+y)+f(x))^2} ydy
 \end{eqnarray}
and 
  \begin{eqnarray}\label{Derr1}
  \nonumber2\pi\partial_x u_2(x)&=& \textnormal{p.v.}\bigintss_{\R}\frac{\big(f(x+y)-f(x)\big)\big(f^\prime(x+y)-f^\prime(x)\big) }{y^2+(f(x+y)-f(x))^2} dy\\
  &-&\textnormal{p.v.}\bigintss_{\R}\frac{\big(f(x+y)+f(x)\big)\big(f^\prime(x+y)+f^\prime(x)\big) }{y^2+(f(x+y)+f(x))^2} dy,
 \end{eqnarray}
 where the notation $\textnormal{p.v.}$ is the Cauchy principal value. It is worthy to point out that the first two integrals appearing in the right hand side of the expressions of $\partial_x u_1$ and $\partial_x u_2$ are in fact connected to Cauchy operator associated to the curve $f$ defined in \eqref{Cauchop}. This operator is well-studied in the literature and some details will be given later in the Section \ref{SingXX}. 
 Next, we shall check that the integrals appearing in the right-hand side of the preceding formulas  can actually be restricted over a compact set related to the support of $f$. Let $[-M,M]$ be a symmetric segment  containing the set   $K_0-K_0$, with $K_0$ being the convexe hull of the support of $f_0$ denoted by $\hbox{supp}f_0$. It is clear that  the support of $\partial_x u_1f^\prime$ is contained in $K_0$ and thus  for $x\in K_0$ one has
$$
\textnormal{p.v.}\bigintss_{\R}\frac{f^\prime(x+y)-f^\prime(x) }{y^2+(f(x+y)-f(x))^2} ydy=\textnormal{p.v.}\bigintss_{-M}^{M}\frac{f^\prime(x+y)-f^\prime(x) }{y^2+(f(x+y)-f(x))^2} ydy.
$$
Consequently, we obtain for $x\in\R$,
 \begin{eqnarray*}
  2\pi f^\prime(x)\partial_x u_1(x)&=&\textnormal{p.v.}\bigintss_{-M}^{M}\frac{f^\prime(x+y)-f^\prime(x) }{y^2+(f(x+y)-f(x))^2} ydy\\
  &-&\textnormal{p.v.} \bigintss_{-M}^{M}\frac{f^\prime(x+y)+f^\prime(x) }{y^2+(f(x+y)+f(x))^2} ydy.
 \end{eqnarray*}
Coming back to the integral representation defining $\partial_x u_2$ one can see,  using a cancellation between both integrals,  that the support of $\partial_x u_2$ is contained in $K_0$. Furthermore, for $x\in K_0$ one may write,
  \begin{eqnarray*}
  2\pi\partial_x u_2(x)&=& \textnormal{p.v.}\bigintss_{-M}^{M}\frac{\big(f(x+y)-f(x)\big)\big(f^\prime(x+y)-f^\prime(x)\big) }{y^2+(f(x+y)-f(x))^2} dy\\
  &-&\textnormal{p.v.}\bigintss_{-M}^{M}\frac{\big(f(x+y)+f(x)\big)\big(f^\prime(x+y)+f^\prime(x)\big) }{y^2+(f(x+y)+f(x))^2} dy.
 \end{eqnarray*}
Gathering the preceding identities we deduce that 
 \begin{equation}\label{Ham11}
2\pi\big(-\partial_x u_1f^\prime(x)+\partial_x u_2\big)=F(x)-G(x)
 \end{equation}
 with
 
 $$
 F(x)\triangleq\textnormal{p.v.}\bigintss_{-M}^{M}\frac{\big[f(x+y)-f(x)-y f^\prime(x)\big]\big(f^\prime(x+y)-f^\prime(x)\big) }{y^2+(f(x+y)-f(x))^2} dy
 $$
 and
 $$
 G(x)\triangleq\textnormal{p.v.}\bigintss_{-M}^{M}\frac{\big[f(x+y)+f(x)+y f^\prime(x)\big]\big(f^\prime(x+y)+f^\prime(x)\big) }{y^2+(f(x+y)+f(x))^2} dy.
 $$
 Keeping in mind, and this will be useful at some points, that the foregoing integrals can be also extended to the full real axis. Sometimes and in order to reduce the size of the integral representation, we use the notations
 \begin{equation}\label{Nota11}
 \Delta_y^{\pm} f(x)=f(x+y)\pm f(x).
\end{equation}
 Thus $F$ and $G$ take the form
 \begin{equation}\label{TataX1}
 F(x)=\textnormal{p.v.}\bigintss_{-M}^{M}\frac{\big[\Delta_y^-f(x)-y f^\prime(x)\big]\Delta_y^-f^\prime(x) }{y^2+(\Delta_y^-f(x))^2} dy
 \end{equation}
 and
 \begin{equation}\label{TataX2}
 G(x)=\textnormal{p.v.}\bigintss_{-M}^{M}\frac{\big[\Delta_y^+f(x)+y f^\prime(x)\big]\Delta_y^+f^\prime(x)}{y^2+(\Delta_y^+f(x))^2} dy.
\end{equation}

The first main first result of this paper is devoted to the local well-posedness issue. We shall discuss  two results related to sub-critical and critical regularities.  Denote by $X$ one of the following spaces: H\"{o}lder spaces $C^s(\R)$ with $s\in(0,1)$ or Dini space $C^\star(\R).$ For more details about classical properties of these spaces we refer the reader  to the Section \ref{SecDini}.
\begin{theorem}\label{thm1}
 Let $f_0$ be a positive compactly supported function such that $f_0^\prime\in X$. Then, the following results hold true.
\begin{enumerate}
\item The equation \eqref{graph1} admits a unique local solution such that  $f^\prime\in L^\infty([0,T],X)$, where the time existence $T$ is related to the norm $\|f^\prime_0\|_{X}$ and the size of the support of $f_0$.
In addition, the solution satisfies the maximum principle
$$
\forall t\in[0,T],\quad \|f(t)\|_{L^\infty}\leq \|f_0\|_{L^\infty}.
$$
\item There exists a constant $\varepsilon>0$ depending only on the size of the support of $f_0$ such that if 
\begin{equation}\label{small1}
\|f^\prime_0\|_{C^s}<\varepsilon
\end{equation}
then the equation  \eqref{graph1} admits a unique global solution $f^\prime\in L^\infty(\R_+;C^s(\R))$. Moreover
$$
\forall t\geq0,\quad \|\partial_x f(t)\|_{L^\infty}\leq C_0 e^{-t}
$$
with $C_0$ a constant depending only on $\|f_0^\prime\|_{C^s}.$
\end{enumerate}
\end{theorem}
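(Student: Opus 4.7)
I view \eqref{graph1} as a quasi-linear transport equation for $f$ driven by the nonlocal velocity field $(u_1,u_2)$ of \eqref{fields6}. Working with the derivative equation \eqref{graphder} for $g:=f'$, Part (1) reduces to a standard transport-estimate argument in $X$ once the source term $F-G$ from \eqref{Ham11} is controlled in $X$. Part (2) rests on an additional structural ingredient: a hidden linear damping $-f'$ sitting inside $\partial_x u_2$, which can be extracted explicitly from the logarithmic kernel of $u_2$.

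\textbf{Part (1) --- Local well-posedness.} The analytic backbone is a pair of a priori estimates for the velocity, both following from the explicit representations \eqref{TataX1}--\eqref{TataX2} together with the boundedness of the Cauchy-type operator on H\"older and Dini spaces recalled in Section \ref{SingXX}: first, $\|u_1\|_{L^\infty}+\|u_2\|_{L^\infty}+\|\partial_x u_1\|_{L^\infty}\lesssim_{\|f\|_{L^\infty},|\mathrm{supp}\,f|}\|f'\|_{L^\infty}$; and second, $\|F-G\|_{X}\lesssim_{\|f\|_{L^\infty},|\mathrm{supp}\,f|}\|f'\|_{X}^{2}$. The second bound exploits the second-order cancellation $\Delta_y^- f-y f'(x)=O(|y|^{1+s})\|f'\|_{C^s}$ in the numerator of $F$ and the corresponding symmetrized structure of $G$. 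Feeding these into \eqref{graphder}, I would construct a solution by Friedrichs mollification (or a Picard scheme) and close an Osgood-type transport inequality for $\|g(t)\|_X$ on a time interval $[0,T]$ with $T=T(\|f_0'\|_X,|\mathrm{supp}\,f_0|)$. The maximum principle on $\|f(t)\|_{L^\infty}$ is provided by Proposition \ref{prop20}, and uniqueness follows from a difference estimate in a weaker norm using the Lipschitz dependence of $(u_1,u_2)$ on $f$.

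\textbf{Part (2) --- Global existence and decay.} The decisive observation is that, when $f$ is small, the logarithmic integral in \eqref{fields6} splits off an explicit linear contribution. Writing
\[
\log\!\left(\frac{y^2+(\Delta_y^- f)^2}{y^2+(\Delta_y^+ f)^2}\right)=\log\!\left(\frac{y^2}{y^2+4f(x)^2}\right)+\text{error},
\]
and using the elementary identity $\int_{\R}\log\bigl(y^2/(y^2+4a^2)\bigr)\,dy=-4\pi\,a$ for $a\geq 0$, one gets $u_2(x)=-f(x)+N(x)$ with $\|N\|_{L^\infty}\lesssim \|f\|_{L^\infty}\bigl(\|f\|_{L^\infty}+\|f'\|_{L^\infty}\bigr)$. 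Differentiating in $x$ and combining with \eqref{Ham11} yields $-\partial_x u_1\,f'+\partial_x u_2=-f'+\mathcal{R}$ with $\|\mathcal{R}\|_{X}\lesssim \|f\|_{L^\infty}\|f'\|_X+\|f'\|_{X}^{2}$. Hence \eqref{graphder} takes the form $\partial_t g+u_1\partial_x g=-g+\mathcal{R}$. A continuity/bootstrap argument based on the smallness hypothesis \eqref{small1} and the support-confinement in $[a,b]$ then produces the differential inequality
\[
\frac{d}{dt}\|f'(t)\|_{C^s}\leq -\|f'(t)\|_{C^s}+C\,\|f'(t)\|_{C^s}^{2},
\]
which for $\|f_0'\|_{C^s}<\varepsilon$ sufficiently small integrates to $\|f'(t)\|_{C^s}\leq C_0 e^{-t}$ for all $t\geq 0$.

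\textbf{Main obstacle.} The delicate part is the rigorous extraction, and $X$-level control, of the $-f'(x)$ damping. A naive estimate of the right-hand side of \eqref{graphder} only gives $\lesssim \|f'\|_X^2$---enough for stability but not for decay. The damping has to be identified at the level of $u_2$ itself, and the remainder $N(x)$ (together with $\partial_x N$) must be estimated in the derivative norm $X$, which requires integration by parts in the $y$-variable and the Lipschitz-graph analogue of the Cauchy-operator bounds. Matching this decomposition with the compact-support propagation and with the positivity of $f$ is where the bulk of the quantitative work of Part (2) lies.
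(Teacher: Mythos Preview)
Your Part (1) sketch is broadly aligned with the paper, though two points deserve caution. First, the bound $\|F-G\|_X\lesssim\|f'\|_X^2$ is not quite right: Proposition~\ref{prop10} shows that $G$ carries a genuinely \emph{linear} contribution, $\|G\|_X\le C(1+\|f'\|_D^{1/3})(\|f'\|_X+\|f'\|_X^{16})$. This is harmless for local existence but foreshadows the real difficulty in Part (2). Second, the paper does \emph{not} use a plain Friedrichs or Picard scheme: the a priori estimates rely on nonlinear rigidities (positivity of $f$, confinement of the support, and the specific pairing $f'\,T_f^{\alpha,\beta}$ in Theorem~\ref{th1}) that a linear regularization does not preserve. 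The construction in Section~\ref{Const778} uses a double regularization (a kernel cutoff $\varepsilon$ and a nonlinear iteration in $n$) precisely to respect these constraints.

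In Part (2) there is a genuine gap. Your key claim is that after extracting $-f'$, the remainder satisfies $\|\mathcal R\|_{C^s}\lesssim\|f\|_{L^\infty}\|f'\|_{C^s}+\|f'\|_{C^s}^2$, leading to $\tfrac{d}{dt}\|f'\|_{C^s}\le -\|f'\|_{C^s}+C\|f'\|_{C^s}^2$ and hence exponential decay of $\|f'\|_{C^s}$. This is too optimistic and in fact fails: Proposition~\ref{prop30} shows that $G=2\pi f'+L+N$ with $\|L\|_s\le 2\pi(\|f'\|_s+2\|f'\|_{L^\infty})+C\|f'\|_{L^\infty}^s\|f'\|_s$. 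The ``linear tail'' $L$ is of the \emph{same size} in $C^s$ as the damping $2\pi f'$ you have extracted, so there is no net strong damping at the level of the resolution space. What saves the argument is that $\|L\|_{L^\infty}$ is genuinely small ($\le C\|f\|_{L^\infty}^s\|f'\|_s$), and one can interpolate: the exact decay $\|f\|_{L^1}=e^{-t}\|f_0\|_{L^1}$ combined with $\|f'\|_{L^\infty}\le C\|f\|_{L^1}^{s/(2+s)}\|f'\|_s^{2/(2+s)}$ feeds a time-integrable coefficient into the $C^s$ Gronwall inequality (see \eqref{JunX1}--\eqref{GW1}). The outcome is that $\|f'(t)\|_{C^s}$ stays \emph{bounded}, not that it decays; the exponential decay $\|f'(t)\|_{L^\infty}\le C_0e^{-t}$ is obtained separately from the $L^\infty$ estimate \eqref{tun44}. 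Note that the theorem only asserts decay of $\|\partial_x f\|_{L^\infty}$, not of $\|f'\|_{C^s}$---your conclusion is stronger than what is claimed and, as far as the paper's analysis goes, than what is actually true.
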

Before outlining the strategy of the proofs some comments are in order.
\begin{remarks}
\begin{enumerate}
\item 
The global existence result is only proved for the sub-critical case. The critical case is more delicate to handle due to the lack of strong damping  which is only proved in the  sub-critical case.
\item From Sobolev emebeddings we deduce according to  the assumption on $f_0$ listed in \mbox{Theorem $\ref{thm1}$} that $f_0$ belongs to the space $\in C^1_c(\R)$ of compactly supported $C^1$ functions.
\item The maximum principle holds true globally in time, however it is note clear whether some suitable weak global solutions could be constructed in this setting.
\end{enumerate}
\end{remarks}

Now we shall give some details about the proofs. First we establish  local-in-time  a priori estimates based on the transport structure of the equation combined with some refined studies  on  modified curved Cauchy  operators implemented in Section \ref{SingXX} and essentially based on standard arguments from singular integrals. The construction of the solutions  done in the subsection \ref{Const778} in slightly intricate than the usual schemes used  for transport equations. This is due to the fact that the establishment of the a priori estimates is not only purely energetic. First, at some levels we use some nonlinear  rigidity of the equation like in Theorem \ref{thm1}-$(3)$ where the factor $f^\prime$ behind the operator should be the derivative of the function $f$ that appears inside the operator. Second, we use at some point the fact that the support is confined in time. Last we use at different steps  the positivity of the solution. Hence it seems quite difficult to find a linear scheme taking into account of those constraints. The idea is to implement a nonlinear scheme with two regularizing  parameters $\varepsilon$ and $n$. The first one is used to smooth out  the singularity  of the kernel  and the second  to smooth the solution through a nonlinear scheme. We first establish  that one has uniform a priori estimates on $n$ but on some small interval depending on $\varepsilon$. We are also able  to pass to the limit on $n$ and get a solution for a modified nonlinear problem. Second we check that the a priori estimates still be valid uniformly on $\varepsilon$. This ensures that the time existence can be in fact pushed up to the time given by the a priori estimates obtained for the initial equation \eqref{graph1}.  As a consequence we get  a uniform time existence with respect to $\varepsilon$ and finally we establish the  convergence towards  a solution of the initial value problem using  standard compactness arguments.

The global existence for small initial data requires much more careful analysis because there is no apparent dissipation or damping mechanisms in the equation. Moreover the estimates of the source term $G$ contains some linear parts as it is stated in Proposition \ref{prop10}. The basic ingredient to get rid of those linear parts  is to use a  hidden weak damping effect in $G$  that can  just absorb the growth of the linear part. We do not know if the damping proved for lower regularity still happen in the resolution space.  As to the nonlinear terms, they  are always associated with  some subcritical norms  and thus using  an interpolation argument with the exponential decay of the $L^1$ norm we get a  global in time control that  leads to the global existence.
 
The second result that we shall discuss deals with the asymptotic behavior of the solutions to \eqref{sqg} and \eqref{graph1}. We shall  study the collapse of the support to a collection of disjoint  segments  located at the axis of symmetry. Another interesting issue that will covered by this discussion concerns the characterization of the limit behavior  of the probability measure 

\begin{equation}\label{measprop}
dP_t\triangleq e^t\frac{{\bf{1}}_{D_t}}{|D_0|} dA,
\end{equation} 
with $dA$ being Lebesgue measure and $|D_0|$ denotes the Lebesgue measure of $D_0.$ Our result reads as follows.
\begin{theorem}\label{thm2}
Let $f_0$ be a positive compactly supported function such {that  $f^\prime_0\in C^{s}(\R)$,} with  $s\in(0,1)$. Assume that $\textnormal{supp} f_0$ is the  union of  $n-$disjoint segments and satisfying the smallness \mbox{condition $\eqref{small1}$.} Then there exists  a compact set $D_\infty\subset\R$  composed of   exactly of $n-$disjoint  segments  and a constant $C>0$ such that 
$$
\forall \, t\geq0, \quad d_H(D_t,D_\infty)\leq C e^{-t},\quad |D_\infty|\geq \frac12 |D_0|,
$$
with $d_H$ being the  Hausdorff distance and $|D_\infty|$ is the one-dimensional Lebesgue measure of $D_\infty$. In addition, the probability measures $\{dP_t\}_{t\geq0}$ defined in \eqref{measprop}  converges weakly  as $t$ goes to $+\infty$ to the   probability measure 
$$
dP_\infty:=\Phi\,\delta_{D_\infty\otimes\{0\}},
$$
with $\Phi$ being  a compactly supported   function in $D_\infty$ belonging to $C^\alpha(\R), $ for any $\alpha\in (0,1)$ and  can  be expressed in the form
\begin{equation}\label{Densmea}
\Phi(x)=\frac{f_0(\psi_\infty^{-1}(x))}{\|f_0\|_{L^1}} e^{g(x)},
\end{equation}
with  $g$  a function that can be implicitly  recovered from  the full  dynamics of solution $\{f_t, t\geq0\}$ and 
$$
\psi_\infty=\lim_{t\to+\infty}\psi(t).
$$
Note that $\psi(t)$ is the one-dimensional  flow associated to $u_1$ defined in  \eqref{flot1} and 
$$
D_t=\Big\{(x,y),\, x\in {\textnormal{supp }f_t};\, -f_t(x)\leq y\leq f_t(x)\Big\}.
$$

\end{theorem}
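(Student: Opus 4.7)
The strategy is to use Theorem~\ref{thm1}(2) to produce a global solution with exponential slope decay $\|\partial_x f(t)\|_{L^\infty}\le C_0 e^{-t}$, and then to analyze the one-dimensional flow $\psi(t,\cdot)$ associated to $u_1$. Since $f_t\ge 0$ vanishes at the edges of its compact support, the slope decay forces $\|f_t\|_{L^\infty}\le Ce^{-t}$. The convolution bound $\|v(t)\|_{L^\infty}\lesssim |D_t|^{1/2}$ together with the exponential area decay $|D_t|=e^{-t}|D_0|$, or alternatively a direct estimate of the graph formulas \eqref{fields6}, yields $\|u_1(t)\|_{L^\infty}+\|u_2(t)\|_{L^\infty}\le C e^{-t/2}$. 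The ODE $\dot\psi=u_1(t,\psi)$ then shows that $\psi(t,X)$ is Cauchy uniformly in $X$ and converges exponentially to a limit $\psi_\infty(X)$. Since $D_t$ sits in a vertical tube of height $2\|f_t\|_{L^\infty}$ above $\psi(t,\textnormal{supp}\,f_0)\times\{0\}$, setting $D_\infty:=\psi_\infty(\textnormal{supp}\,f_0)$ we obtain the Hausdorff bound $d_H(D_t,D_\infty\otimes\{0\})\le C e^{-t}$ after absorbing the slower decay into the faster of the two components.

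To ensure that $D_\infty$ consists of \emph{exactly} $n$ disjoint segments, I monitor the gaps $g_i(t):=\psi(t,a_{i+1})-\psi(t,b_i)$ between the components $[a_i,b_i]$ of $\textnormal{supp}\,f_0$. From $\dot g_i(s)=u_1(s,\psi(s,a_{i+1}))-u_1(s,\psi(s,b_i))$ and the exponential $L^\infty$ bound on $u_1$, one gets $|g_i(t)-g_i(0)|\le C\sqrt{|D_0|}$, which can be made smaller than $\tfrac12(a_{i+1}-b_i)$ for $\varepsilon$ small enough in \eqref{small1}, using that $f_0$ vanishes at the edges of its support and so $|D_0|=O(\varepsilon)$. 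The lower bound $|D_\infty|\ge\tfrac12|D_0|$ then follows by combining a uniform control $\partial_X\psi_\infty\ge\tfrac12$ (coming from $\int_0^\infty\|\partial_x u_1(s)\|_{L^\infty}\,ds<\infty$ and smallness) with the elementary inequality $|D_0|=2\int f_0\le 2\|f_0\|_{L^\infty}|\textnormal{supp}\,f_0|$ and the smallness of $\|f_0\|_{L^\infty}$.

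For the weak convergence of $dP_t$, a Taylor expansion in $y$ combined with $\|f_t\|_{L^\infty}\le Ce^{-t}$ gives, for any $\varphi\in C_c(\R^2)$,
$$\int\varphi\,dP_t=\frac{2e^t}{|D_0|}\int\varphi(x,0)f_t(x)\,dx+O(e^{-t}).$$
Changing variables $x=\psi(t,X)$ and introducing $G(t,X):=e^t f_t(\psi(t,X))\,\partial_X\psi(t,X)$, the right-hand side becomes $\|f_0\|_{L^1}^{-1}\int \varphi(\psi(t,X),0)\,G(t,X)\,dX$ since $\|f_0\|_{L^1}=|D_0|/2$. From the graph equation and the Jacobian ODE,
$$\partial_t\log G=1+\frac{u_2(t,\psi)}{f_t(\psi)}+\partial_x u_1(t,\psi).$$
The decisive point is that this quantity is $O(e^{-ct})$: by symmetry across the real axis one has $v_2(t,x,0)=0$ and $\partial_y v_1(t,x,0)=0$, so $u_2(t,x)=f_t(x)\partial_y v_2(t,x,0)+O(f_t^2)$ and $\partial_x u_1(t,x)=\partial_x v_1(t,x,0)+O(e^{-t})$, whence the right-hand side reduces to $1+\textnormal{div}\,v(t,x,0)+O(e^{-t})=O(e^{-t})$ in view of $\textnormal{div}\,v=-\mathbf{1}_{D_t}$. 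Integrating yields the pointwise limit $G(t,X)\to f_0(X)e^{g_0(X)}$ with $g_0$ bounded and H\"older, and pushing forward by $\psi_\infty$ gives
$$\Phi(x)=\frac{f_0(\psi_\infty^{-1}(x))}{\|f_0\|_{L^1}}\,e^{g(x)},\qquad g(x):=g_0(\psi_\infty^{-1}(x))-\log\partial_X\psi_\infty(\psi_\infty^{-1}(x)),$$
with the $C^\alpha$-regularity of $\Phi$ inherited from that of $f_0$ and of the flow.

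The main obstacle is turning the formal cancellation $1+\partial_y v_2+\partial_x v_1=0$ into a quantitative pointwise bound of size $O(e^{-ct})$: the velocities $u_1,u_2$ are evaluated along the moving curve $y=f_t(x)$ rather than on the axis, which forces one to control second-order corrections involving derivatives of $v$ that are not pointwise bounded but must be handled through the modified Cauchy-type singular integrals developed in Section~\ref{SingXX}, exploiting $\|f_t\|_{L^\infty}+\|\partial_x f_t\|_{L^\infty}=O(e^{-t})$. A secondary difficulty is that preserving the $n$ distinct components of the support requires the smallness of $\varepsilon$ to be adapted not only to the diameter of $\textnormal{supp}\,f_0$ but also to the minimal gap between consecutive segments; this is the non-perturbative place where \eqref{small1} plays an essential role.
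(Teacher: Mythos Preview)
Your strategy is essentially the paper's, and the proposal is sound. Two points of comparison are worth making.

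\textbf{The key cancellation.} Your identification of $1+u_2/f+\partial_x u_1$ with $1+\textnormal{div}\,v(t,x,0)=0$ on $D_t$ is exactly the right heuristic, but the paper does not go through the two-dimensional divergence. Instead it proves directly, in Proposition~\ref{prop20}(1) (see \eqref{VY1}), the identity $u_2(t,x)=-f(t,x)\big(1+R(t,x)\big)$ with $\|R(t)\|_{L^\infty}\le C\|f'(t)\|_D(1+\|f'(t)\|_{L^\infty}^5)$, obtained by an explicit Taylor expansion of the graph formula \eqref{fields6} for $u_2$. This bypasses your ``main obstacle'' entirely: there is no need to expand $v$ off the axis or invoke the Cauchy operators of Section~\ref{SingXX} at this stage. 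Correspondingly, the paper tracks $e^t f_t(\psi(t,X))$ \emph{without} the Jacobian; the integrability of $\partial_x u_1$ (your extra term in $\partial_t\log G$) is established separately via \eqref{zarda88}--\eqref{mqs1} and is used only to show that $\psi_\infty$ is bi-Lipschitz. Your inclusion of the Jacobian is harmless but packages two independent estimates into one.

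\textbf{Preservation of the $n$ components.} Your gap-tracking argument, and the resulting claim that $\varepsilon$ must be adapted to the minimal gap $a_{i+1}-b_i$, is unnecessary. You already have $\tfrac12\le\psi_\infty'\le\tfrac32$ from $\int_0^\infty\|\partial_x u_1\|_{L^\infty}\,d\tau\le C\varepsilon^{s/(2+s)}$; since $\psi_\infty$ is then a strictly increasing homeomorphism of $\R$, it automatically sends the $n$ disjoint closed intervals of $\textnormal{supp}\,f_0$ to $n$ disjoint closed intervals, with no constraint on $\varepsilon$ beyond that of Theorem~\ref{thm1}(2). This is how the paper proceeds (see \eqref{Lip1} and Section~7.2), and it also gives $|D_\infty|\ge\tfrac12|\textnormal{supp}\,f_0|$ immediately, without the detour through $|D_0|\le 2\|f_0\|_{L^\infty}|\textnormal{supp}\,f_0|$.
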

\begin{remark}
The regularity of the profile $\Phi$ might be improved and we expect that $\Phi$ keeps the same regularity as the graph.
\end{remark}
The proof of the collapse of the support to a  disjoint union of segments can be easily derived from the formula \eqref{Densmea} which ensures that the support of the limit measure is exactly the image of the support of $f_0$ by the limit flow $\psi_\infty$ which is a homeomorphism of the real axis. To get the convergence with  the Hausdorff distance we just use the  exponential damping of the amplitude of the curve.  As to the characterization of the limit measure it is based on the exponential decay decay of the amplitude of graph  combined with the scattering as $t$ goes to infinity of the normalized solution $e^tf(t)$. In fact, we prove that the density is nothing but the formal quantity 
$$
\Phi(x)=2\lim_{t\to+\infty} e^{t}f(t,x)
$$
whose existence is obtained using the transport structure of the  equation through the characteristic method  combined with  the damping effects of the nonlinear source terms. 
%
%

\section{Generalities on the limit shapes}
In this short section we shall discuss a simple result dealing with  the role of symmetry in the structure of the limit shape $D_\infty$.  Roughly speaking, we shall prove  that thin initial domains along their axis of symmetry generate concentration to segments. Notice that 
$$
D_\infty\triangleq\Big\{\lim_{t\to+\infty}\psi(t,x), \, x\in D_0\Big\}
$$ 
where $\psi$ is the flow associated to the velocity $v$ and defined through the ODE.
\begin{equation}\label{flot}
\left\lbrace
\begin{array}{l}
\partial_t \psi(t,x)=v(t,\psi(t,x)),\, t\geq0, x\in\R^2, \\
\psi(0,x)=x
\end{array}
\right.
\end{equation}
The existence of the set $D_\infty$ will be proved  below.
We intend to prove the following.
\begin{proposition}\label{Skel}
 The following assertions hold.

\begin{enumerate}
\item If  $D_0$ is a bounded domain of $\R^2$, then  for any $x\in \R^2$ the quantity
$
\displaystyle{\lim_{t\to+\infty}\psi(t,x)}
$ exists.
\item If  $D_0$ is  a simply connected bounded domain symmetric with respect to an axis $\Delta$. Denote by ${d_0}=Length(D_0\cap \Delta).$There exists an absolute constant $C$ such that if
$$
{d_0>C|D_0|^{\frac12}}
$$
then the shape $D_\infty$ contains an interval  of the size ${d_0-C|D_0|^{\frac12}}.$

\end{enumerate}
\end{proposition}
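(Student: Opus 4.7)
My plan is to treat the two assertions in sequence, using a uniform-in-space, integrable-in-time control of the velocity as the single analytic tool, and then handling the geometric part of (2) by continuity of the limit flow plus an elementary displacement argument.

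\smallskip

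\textbf{Part (1): existence of the limit flow.} Because $\rho(t,\cdot)={\bf{1}}_{D_t}$, the velocity from \eqref{aggreg1} satisfies
$$
|v(t,x)|\leq \frac{1}{2\pi}\int_{D_t}\frac{dy}{|x-y|}.
$$
The function $y\mapsto |x-y|^{-1}$ is radially symmetric decreasing about $x$, so by the Hardy--Littlewood rearrangement inequality the integral is bounded by the corresponding integral on a disc of equal area centered at $x$, which yields
$$
\|v(t,\cdot)\|_{L^\infty(\R^2)}\leq \sqrt{|D_t|/\pi}.
$$
Combined with the area decay \eqref{Exp1}, this gives $\|v(t,\cdot)\|_{L^\infty}\leq \sqrt{|D_0|/\pi}\,e^{-t/2}$, which is integrable on $[0,\infty)$. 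Plugged into the ODE \eqref{flot}, it implies
$$
|\psi(t_2,x)-\psi(t_1,x)|\leq 2\sqrt{|D_0|/\pi}\,\bigl(e^{-t_1/2}-e^{-t_2/2}\bigr)
$$
uniformly in $x\in\R^2$, so $t\mapsto\psi(t,x)$ is Cauchy and $\psi_\infty(x)\triangleq \lim_{t\to+\infty}\psi(t,x)$ exists; moreover the convergence is uniform in $x$. Since $\psi(t,\cdot)$ is continuous for each $t$ (the velocity generated by an $L^1\cap L^\infty$ density is log-Lipschitz, so Yudovich/Chemin theory provides a Hölder homeomorphic flow), the uniform limit $\psi_\infty$ is continuous as well.

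\smallskip

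\textbf{Part (2): the skeleton of segments.} The reflection invariance explained at the beginning of Section \ref{Sec-graph} together with uniqueness of the Yudovich solution forces $D_t$ to remain symmetric with respect to $\Delta$ for all $t$. Consequently the component of $v(t,\cdot)$ normal to $\Delta$ vanishes on $\Delta$, so the flow $\psi(t,\cdot)$ preserves the axis $\Delta$. A standard topological argument using simple connectedness shows that $D_0\cap \Delta$ is a single open interval of length $d_0$: for any two points on this intersection, a path in $D_0$ connecting them and its reflection form a Jordan curve whose interior lies in $D_0$, which forces the whole segment between them to belong to $D_0$. Let $a,b$ denote the endpoints of this interval. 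By Part (1) the total displacements satisfy
$$
|\psi_\infty(a)-a|\leq 2\sqrt{|D_0|/\pi},\qquad |\psi_\infty(b)-b|\leq 2\sqrt{|D_0|/\pi},
$$
hence $|\psi_\infty(a)-\psi_\infty(b)|\geq d_0-4\sqrt{|D_0|/\pi}$, which is positive as soon as $d_0>C|D_0|^{1/2}$ with $C=4/\sqrt{\pi}$. Because $\psi_\infty$ is continuous and maps the segment $[a,b]\subset \Delta$ into $\Delta$, the image $\psi_\infty([a,b])\subset D_\infty$ is a connected subset of a line, i.e.\ an interval, of length at least $d_0-C|D_0|^{1/2}$.

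\smallskip

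\textbf{Main obstacles.} The analytic backbone is elementary once the rearrangement estimate is in place, so the main bookkeeping subtleties are: (a) the continuity of $\psi_\infty$, which follows from continuity of each $\psi(t,\cdot)$ and uniform convergence; (b) the rigorous invariance of $\Delta$ under the flow, which rests on uniqueness in the Yudovich class together with the symmetry of the data; and (c) the topological statement that simple connectedness plus symmetry forces $D_0\cap\Delta$ to be a single interval. I expect (c) to be the most delicate point to present cleanly, since it is the only step where the hypothesis ``simply connected'' is genuinely used; once it is granted, the rest of the proof is a one-line Gronwall-type displacement bound combined with connectedness of continuous images.
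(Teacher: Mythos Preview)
Your proof is correct and follows essentially the same strategy as the paper: bound $\|v(t)\|_{L^\infty}$ by $C|D_0|^{1/2}e^{-t/2}$, integrate to get existence of $\psi_\infty$ and a total displacement of order $|D_0|^{1/2}$, then track the two endpoints of the axis segment. The only cosmetic differences are that you obtain the velocity bound via Hardy--Littlewood rearrangement (yielding the explicit constant $C=4/\sqrt\pi$) whereas the paper quotes the interpolation inequality $\|v\|_{L^\infty}\le C\|\rho\|_{L^1}^{1/2}\|\rho\|_{L^\infty}^{1/2}$, and that you invoke continuity of $\psi_\infty$ on the whole segment while the paper simply follows the two endpoints $X^\pm(t)$ and passes to the limit in $d(t)=X^+(t)-X^-(t)$; your Jordan-curve sketch for the connectedness of $D_0\cap\Delta$ is a point the paper also leaves unjustified, and your instinct that it is the one place needing care is right (a clean route is the symmetric Riemann map).
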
  
\begin{proof}
{\bf{(1)}} Integrating in time the flot equation \eqref{flot} yields
$$
\psi(t,x)=x+\int_0^tv(\tau,\psi(\tau,x))d\tau.
$$
Now observe that pointwisely
$$
|v(t,x)|\le\frac{1}{2\pi}\big(\frac{1}{|\cdot|^2}\star|\rho(t)|\big)(x).
$$
Thus  interpolation inequalities combined with \eqref{Exp1} lead to
\begin{eqnarray}\label{Expp11}
\nonumber \|v(t)\|_{L^\infty}&\leq& C\|\rho(t)\|_{L^{1}}^{\frac12}\|\rho(t)\|_{L^\infty}^{\frac12}\\
&\le& Ce^{-\frac{t}{2}}|D_0|^{\frac12},
\end{eqnarray}
with $C$ an absolute constant.
This implies that the integral $\int_0^{+\infty}v(\tau,\psi(\tau,x))d\tau$ converges absolutely and therefore $\displaystyle{\lim_{t\to+\infty}\psi(t,x)}$ exists in $\R^2.$  This allows to define the limit shape  $D_\infty$ as follows:
$$
D_\infty=\Big\{\lim_{t\to+\infty}\psi(t,x), \,\forall\,x\in D_0\Big\}.
$$

{\bf{(2)}} Without loss of generality we will suppose that the straight line  $\Delta$ coincides with the real axis. Since $D$ is simply connected bounded domain, then  there exist two different points $X^-_0, X^+_0\in\R$ such that
$$
 \overline{D_0}\cap\Delta=[X^-_0, X^+_0].
$$
Then it is clear that $\textnormal{Length}(\overline{D_0}\cap\Delta)=X_0^+-X_0^-:=d_0$. By assumption $D_0$ is symmetric with respect to $\Delta$ then the domain $D_t$ remains also  symmetric with respect to the same axis and the points $X^\pm_0$ move necessary along this axis. Denote by
$$
X^\pm(t)=\psi(t, X_0^\pm)
$$
then as the flot is an homeomorphism then 
$$
\overline{D_t}\cap\Delta=[X^-(t), X^+(t)].
$$
Now we wish  to follow the evolution of the distance $d(t):=X^+(t)-X^-(t)$ and find a sufficient condition such that this distance remains away from zero up to infinity. Notice  from the first point that $\displaystyle{\lim_{t\to+\infty} d(t)}$ exists and equals to some positive number $d_\infty$.  From the triangular inequality, one  easily gets that
$$
d(t)\geq d_0-2\int_0^{t}\|v(\tau)\|_{L^\infty}d\tau.
$$
Inequality \eqref{Expp11} ensures that
$$
d(t)\geq d_0-C|D_0|^{\frac12}
$$
and therefore $d_\infty\geq d_0-C|D_0|^{\frac12}.$
Consequently, if $d_0>C|D_0|^{\frac12}$ then the points $\{X^\pm(t)\}$ do not collide up to infinity and thus  the set $D_\infty$ contains a non trivial interval as claimed.
\end{proof}
 \section{Basic properties of Dini and H\"{o}lder spaces}\label{SecDini}
 In this section we set up some function spaces that we shall use and review some of their important properties. 
  Let $f:\R\to\R$ be  a continuous function, we define its modulus of continuity $\omega_f:\R_+\to\R_+$ by
 $$
 \omega_f(r)=\sup_{|x-y|\leq r}|f(x)-f(y)|.
 $$
 This is a nondecreasing function satisfying $\omega_f(0)=0$ and sub-additive, that is for $r_1, r_2\geq0$ we have
 \begin{equation}\label{Sub-ad}
\omega_f(r_1+r_2)\leq \omega_f(r_1)+\omega_f(r_2).
 \end{equation}
 Now we intend to recall Dini and H\"{o}lder spaces. Dini space denoted by  $C^\star(\R)$ is the set  of continuous bounded functions $f$ such that
  \begin{equation*}
 \|f\|_{L^\infty}+\|f\|_{D}<\infty\quad \textnormal{with}\quad \|f\|_{D}=\int_0^1\frac{\omega_f(r)}{r}dr.
\end{equation*}
Another space that we frequently  use throughout this paper is H\"{o}lder space.  Let $s\in(0,1)$ we denote by $C^s(\R)$ the set of functions  $f:\R\to\R$ such that
$$
\|f\|_{L^\infty}+\|f\|_{s}<\infty\quad \textnormal{with}\quad \|f\|_{s}=\sup_{0<r<1}\frac{\omega_f(r)}{r^s}\cdot
$$
 Let $K$ be a compact set of $\R$, we define   $C^\star_K$  as the subspace of $C^\star(\R)$ whose elements are supported in $K.$  Note that $C^\star_K\hookrightarrow L^\infty(\R)$ which means that a constant $C$ depending only on the diameter  of the compact $K$ exists  such that
\begin{equation}\label{Imbed1}
 \forall f\in C^\star_K,\quad \|f\|_{L^\infty}\le C\|f\|_{D}.
\end{equation} 
This follows easily from the observation
$$
\forall r\in (0,1/2], \quad \omega(r)\ln2\le \|f\|_D.
$$
From \eqref{Imbed1} we deduce that  for any $A\geq1$ 
 \begin{eqnarray}\label{L1}
 \nonumber \int_0^A\frac{\omega_f(r)}{r}dr&\le&\|f\|_{D}+2\|f\|_{L^\infty}\ln A\\
 &\le& C\|f\|_{D}\big(1+\ln A\big).
 \end{eqnarray}
Coming back to the definition of Dini semi-norm one deduces  the law products: for $f,g\in C^\star_K$ 
\begin{equation}\label{law2}
 \| fg\|_{D} \le  \| f\|_{L^\infty}\|g\|_D+ \|g\|_{L^\infty}\|f\|_D\quad \hbox{and}\quad  \| fg\|_{D} \le C \| f\|_{D}\|g\|_D.
\end{equation}
Another useful space is   $C^s_K$ which is the subspace of $ C^s(\R)$  whose functions are supported  on the compact $K.$ It is quite obvious that 
\begin{equation}\label{Imbed17}
C^s_K\hookrightarrow  C^\star_K \hookrightarrow  L^\infty.
\end{equation} 
We point out  that all these spaces  are complete. Another property which will be very useful is the following composition law. If $f\in  C^s(\R)$ with $0<s<1$  and $\psi:\R\to\R$  a Lipschitz function then $f\circ\psi\in C^s(\R)$ and
\begin{equation}\label{comp1}
\|f\circ\psi\|_{s}\leq \big[\|f\|_{s}+2\|f\|_{L^\infty}\big]\|\nabla\psi\|_{L^\infty}^s.
\end{equation}

It is worth pointing out that in the case of Dini space $C^\star(\R)$ we get more precise estimate of   logarithmic type,
\begin{equation}\label{comp3}
\|f\circ\psi\|_{D}\leq C\big(\|f\|_{D}+\|f\|_{L^\infty}\big)\Big(1+\ln_+\big(\|\nabla\psi\|_{L^\infty}\big)\Big),
\end{equation}
with the notation
\begin{equation*}
\ln_+ x\triangleq\left\lbrace
\begin{array}{l}
\ln x, \quad \hbox{if} \quad x\geq1\\
0, \quad \hbox{otherwise}.
\end{array}
\right.
\end{equation*}
Another  estimate of great interest is the  following law product,
\begin{equation}\label{lawX3}
 \| fg\|_{s} \le  \| f\|_{L^\infty}\|g\|_{s}+ \|g\|_{L^\infty}\|f\|_{s}.
\end{equation}
In the next task we will be concerned with   a pointwise estimate connecting   a positive smooth function  to its derivative and explore how this property is affected by the regularity. This kind of property will be required in Section \ref{SingXX} in studying  Cauchy operators with special forms.  
   \begin{lemma}\label{lem1}
   Let $K$ be a compact set of $\R$ and $f:\R\to\R_+$ be a continuous positive  function supported in $K$ such that $f^\prime\in C^\star(\R).$ Then we have,
   $$
   \forall \, x\in \R,\quad |f^\prime(x)|\le C\frac{\|f^\prime\|_{D}+\|f^\prime\|_{L^\infty}}{1+\ln_+\big(\frac{\|f^\prime\|_D}{f(x)}\big)}.
    $$
   A weak version of this inequality is 
 $$
\forall x\in \R,\quad  |f^\prime(x)|\leq C\frac{\big(\|f^\prime\|_{D}+\|f^\prime\|_{L^\infty}\big) \big(1+\ln_+(1/\|f^\prime\|_D)}{1+\ln_+(\frac{1}{f(x)}\big)},
 $$
 with $C$ an absolue   constant.
If in addition   $f^\prime\in C^{s}(\R)$ with $s\in (0,1)$, then
 $$
\forall x\in \R,\quad  |f^\prime(x)|\leq C \|f^\prime\|_s^{\frac{1}{1+s}} [f(x)]^{\frac{s}{1+s}}
 $$
 and the constant $C$ depends only on $s$.
   \end{lemma}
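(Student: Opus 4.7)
The plan is to exploit the positivity constraint $f \geq 0$ together with the continuity of $f'$: if $|f'(x)|$ is large, then $f$ must decrease rapidly on one side of $x$, and this decrease cannot exceed $f(x)$ itself without violating positivity. All three estimates will follow from a single master inequality, refined according to the regularity available on $f'$.

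Fix $x$ and assume without loss of generality that $a := f'(x) > 0$ (the case $a < 0$ is handled by looking at $x + r$ instead of $x - r$, and $a = 0$ is trivial). For any $r \in (0,1]$ the modulus of continuity gives $f'(y) \geq a - \omega_{f'}(r)$ for $y \in [x-r, x]$, so integrating on this interval and using $f(x-r) \geq 0$ yields
\begin{equation*}
f(x) \ \geq \ ra \ - \ \int_0^r \omega_{f'}(t)\, dt.
\end{equation*}
This is the master estimate, and all three conclusions are obtained by choosing $r$ appropriately and inserting the regularity information on $f'$.

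For the H\"{o}lder case $\omega_{f'}(t) \leq \|f'\|_s\, t^s$, so the master estimate becomes $f(x) \geq ra - \|f'\|_s\, r^{1+s}/(1+s)$. Optimizing in $r$ with the unconstrained maximizer $r_* = (a/\|f'\|_s)^{1/s}$ directly gives $a \leq C\,\|f'\|_s^{1/(1+s)} f(x)^{s/(1+s)}$ when $r_* \leq 1$, while the boundary case $r = 1$ produces only the stronger bound $a \leq 2 f(x)$. No real difficulty appears here.

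For the Dini case, the decisive step is to introduce $r_\star := \sup\{r \in (0,1] : \omega_{f'}(r) \leq a/2\}$, which forces $f'(y) \geq a/2$ on $[x-r_\star, x]$ and therefore $f(x) \geq a r_\star/2$. To bound $r_\star$ from below one uses the Dini norm: if $r_\star < 1$, monotonicity of $\omega_{f'}$ gives
\begin{equation*}
\|f'\|_{D} \ \geq \ \int_{r_\star}^1 \frac{\omega_{f'}(r)}{r}\, dr \ \geq \ \frac{a}{2} \ln(1/r_\star),
\end{equation*}
so $r_\star \geq e^{-2\|f'\|_{D}/a}$ and hence $f(x) \geq (a/2)\,e^{-2\|f'\|_{D}/a}$. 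The main obstacle is now algebraic: inverting this implicit relation in $a$. Setting $u := 2\|f'\|_{D}/a$ and $L := \ln_+(\|f'\|_{D}/f(x))$, the inequality reads $u e^u \geq e^L$, and a short analysis of this Lambert-type equation yields $u \geq c L$ for $L \geq 1$, i.e. $a \leq C\,\|f'\|_{D}/(1+L)$. The case $r_\star = 1$ directly gives $a \leq 2f(x) \leq 2\|f'\|_{D} e^{-L} \leq 2\|f'\|_{D}/(1+L)$, and combining with the trivial bound $a \leq \|f'\|_{L^\infty}$ restores the full numerator $\|f'\|_{D} + \|f'\|_{L^\infty}$ needed when $L$ is of order one. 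Finally, the weak version follows from the sharp one by writing $\ln(\|f'\|_{D}/f(x)) = \ln(1/f(x)) - \ln(1/\|f'\|_{D})$ and splitting according to whether $\ln(1/f(x))$ dominates $2\ln(1/\|f'\|_{D})$ or not, the penalty being precisely the logarithmic factor $1 + \ln_+(1/\|f'\|_{D})$.
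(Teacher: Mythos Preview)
Your proof is correct. The H\"older case follows the same optimization as the paper (both arrive at the master inequality $f(x)\ge ra-\int_0^r\omega_{f'}$ and optimize in~$r$), so there the two arguments coincide. One minor slip: in the boundary case $r_*>1$ you obtain $a\le \tfrac{1+s}{s}f(x)$, not $a\le 2f(x)$; also, turning this into the desired bound $a\le C\|f'\|_s^{1/(1+s)}f(x)^{s/(1+s)}$ tacitly uses $f(x)\le C_K\|f'\|_s$, which needs the compact support (the paper's own treatment of the range $h>1$ has the same hidden issue).

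Where your argument genuinely diverges from the paper is in the Dini case. The paper divides the pointwise inequality $f(x)\ge hf'(x)-h\omega_{f'}(h)$ by $h^2$ and \emph{integrates} in $h$ over $[\varepsilon,1]$, thereby producing the Dini norm directly:
\[
\frac{f(x)}{\varepsilon}+f'(x)\ln\varepsilon+\|f'\|_D\ \ge\ 0,
\]
and then optimizes in $\varepsilon$. You instead \emph{select a scale} $r_\star=\sup\{r\le 1:\omega_{f'}(r)\le a/2\}$, read off $f(x)\ge \tfrac12 a r_\star$, and bound $r_\star$ from below via $\|f'\|_D\ge\int_{r_\star}^1\omega_{f'}/r\ge \tfrac{a}{2}\ln(1/r_\star)$. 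Both routes land on an implicit relation of the form $f(x)\gtrsim a\,e^{-C\|f'\|_D/a}$, which is then inverted. The paper's integration trick is slicker in that it yields the Dini norm in one stroke and avoids the Lambert-type inversion; your scale-selection argument is perhaps more transparent conceptually (it isolates exactly the mechanism: the largest interval on which $f'$ has not dropped below $a/2$) and generalizes more readily to other moduli. Either way the derivation of the weak version from the sharp one is the same in both proofs.
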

   \begin{proof}
 Let $x$ be a given point,  without any loss of generality one can assume that $f^\prime(x)\geq0$. Now let $h\in [0,1]$ then using the mean value theorem, there exists $c_h\in [x-h,x)$ such that
 \begin{eqnarray*}
 f(x-h)&=&f(x)-h f^\prime (c_h)\\
 &=&f(x)-hf^\prime (x) -h[f^\prime(c_h)-f^\prime(x)]\\
 &\le& f(x)-hf^\prime (x)+ h\,\omega_{f^\prime}(h).
 \end{eqnarray*}
From the positivity of the function $f$ we deduce that for any $h\in [0,1]$ one gets 
$$
 f(x)-hf^\prime (x)+ h\,\omega_{f^\prime}(h)
\geq0.
$$
Then dividing by $h^2$ and integrating in $h$ between $\EE$ and $1$ , with  $\EE\in (0,1]$, we get 
$$
 f(x) \frac1\EE+f^\prime (x) \ln\EE+\|f^\prime\|_{D}\geq0.
$$
Multiplying by $\EE$ we obtain
\begin{equation}\label{Eq5}
\forall \,\EE\in(0,1),\quad f(x) +f^\prime (x) \EE \ln \EE+\|f^\prime\|_{D}\, \EE \geq0.
\end{equation}
By studying the variation with respect to $\EE$ we find that the suitable value of $\EE$ is given by 
$$
\ln\EE=-1-\frac{\|f^\prime\|_{D}}{f(x)}.
$$
Inserting  this choice  into \eqref{Eq5} we find that
$$
\EE f^\prime(x)\le f(x)
$$
that is
$$
e^{-1-\frac{\|f^\prime\|_D}{f^\prime(x)}} f^\prime(x)\leq f(x).
$$
From the inequality  $te^{-t}\leq e^{-1}$ we deduce that 
$$
e^{-1}\geq \frac{\|f^\prime\|_D}{f^\prime(x)}e^{-\frac{\|f^\prime\|_D}{f^\prime(x)}}
$$
which implies in turn that
$$
e^{-1-\frac{\|f^\prime\|_D}{f^\prime(x)}} f^\prime(x)\geq e^{- 2\frac{\|f^\prime\|_D}{f^\prime(x)}} \|f^\prime\|_{D}.
$$
Consequently we get
$$
 e^{- 2\frac{\|f^\prime\|_D}{f^\prime(x)}} \|f^\prime\|_{D}\le f(x).
$$
Thus when $\frac{f(x)}{\|f^\prime\|_D}>1$ this estimate does not give any useful information and then we simply write
$$
f^\prime(x)\le \|f^\prime\|_{L^\infty}.
$$ However for $\frac{f(x)}{\|f^\prime\|_D}<1$ we get
 $$
 f^\prime(x)\leq C\frac{\|f^\prime\|_{D}}{1+\ln_+(\frac{\|f^\prime\|_{D}}{f(x)})}.
 $$
 From which we deduce that
 $$
 f^\prime(x)\leq C\frac{\|f^\prime\|_{D} (1+\ln_+(1/\|f^\prime\|_D)}{1+\ln_+(\frac{1}{f(x)})}.
 $$
 Indeed, one may use the estimate
 $$
\forall x>0,\quad  \frac{1+\ln_+(1/x)}{1+\ln_+(a/x)}\leq 1+\ln_+(1/a),
 $$
 which can be checked easily  by studying the variation of the fractional function. 
 
 Now let us move to  the proof when $f^\prime$ is assumed to belong to H\"{o}lder space  $C^s$, with $s\in (0,1)$. Following the same proof as before one deduces that under the assumption  $f^\prime(x)\geq0$ one obtains   for any $h\in \R_+$
 $$
 f(x)-h f^\prime(x)+h^{1+s}\|f^\prime\|_s\geq0.
 $$
 By studying the variation of this function with respect to $h$ we find that  the best choice of $h$ is given by
 $$
 h^s=\frac{f^\prime(x)}{(1+s)\|f^\prime\|_s},
 $$
  which implies the desired result, that is, 
  $$
  f^\prime(x)\le C\|f^\prime\|_s^{\frac{1}{1+s}} [f(x)]^{\frac{s}{1+s}}.
  $$
  The proof is now achieved.
   \end{proof}
   %
   \section{Modified curved Cauchy operators}\label{SingXX}
   This section is devoted to the study of some variants  of Cauchy operators which are closely connected to the operators arising in   \eqref{Derr} and \eqref{Derr1}. Let us first recall the classical Cauchy operator associated to  the graph of a Lipschitz function $f:\R\to\R$ ,
 \begin{equation}\label{Cauchop}
   \mathcal{C}_fg(x)=\bigintsss_{\R}\frac{g(x+y)-g(x)}{y+i(f(x+y)-f(x))}dy.
\end{equation}
   which is well-defined at least for smooth function $g.$ According to a famous theorem of Coifman, McIntosh, and Meyer \cite{Coifman}, this operator can be extended as a bounded operator from $L^p$ to $L^p$ for $1<p<\infty.$ By adapting the proof of the paper of  Wittmann \cite{Wittman}, this operator can also be extended continuously from $C^s_K$ to $C^s(\R)$ for $0<s<1$, provided that $f$ belongs to $C^{1+s}(\R).$ However  this operator  fails to be extended continuously  from Dini space $C^\star_K$ to itself as it can be checked from Hilbert transform.  The structure of the operators that we have to deal with, as one may observe from the expression of $F$  following  \eqref{Ham11}, is slightly different from the Cauchy operators. It can be associated to the truncated  bilinear Cauchy operator defined as follows: for given $M>0$,  $\theta\in [0,1]$,
    \begin{equation*}
   \mathcal{C}_f^\theta(g,h)(x)=\bigintss_{-M}^M\frac{\big(g(x+\theta y)-g(x)\big)\big(h(x+y)-h(x)\big)}{y+i(f(x+y)-f(x))}dy.
\end{equation*}
The real and imaginary parts of this operator are given respectively by 
\begin{equation}\label{BiCauchop}
   \mathcal{C}_f^{\theta,\Re}(g,h)(x)=\bigintss_{-M}^M\frac{y\big(g(x+\theta y)-g(x)\big)\big(h(x+y)-h(x)\big)}{y^2+[f(x+y)-f(x)]^2}dy
\end{equation}
and
\begin{equation*}
   \mathcal{C}_f^{\theta,\Im}(g,h)(x)=-\bigintss_{-M}^M\frac{\big(f(x+y)-f(x)\big)\big(g(x+\theta y)-g(x)\big)\big(h(x+y)-h(x)\big)}{y^2+[f(x+y)-f(x)]^2}dy.
\end{equation*}
   In what follows we denote by $X$ one of the spaces $C^s_K,$ with $0<s>1$ or $C^\star_K$. The  result that we shall discuss deals with  the continuity of the bilinear operator on the spaces $X$. This could have been discussed in the literature and as we need to control the continuity constant we shall give a detailed proof.
   \begin{proposition}\label{propCart1}
  Let $K$ be a compact  set of $\R$ and $f$ be  a compactly supported function such that $f^\prime\in X$. Then the  following assertions hold true.
   \begin{enumerate}
   \item The bilinear operator  
   ${\mathcal{C}}_f^\theta: X\times X\to X
   $ is well-defined and continuous. More precisely, there exits a constant $C$ independent  of $\theta$ such that for any $g,h\in X$
    $$
 \|{\mathcal{C}}_f^{\theta,\Re}(g,h)\|_{X}\leq C\big(1+\|f^\prime\|_{L^\infty}\|f^\prime\|_X\big)\Big(\| g\|_{D}\|h\|_{X}+\| h\|_{D}\|g\|_{X}\Big)   
   $$
and 
   $$
  \|{\mathcal{C}}_f^{\theta,\Im}(g,h)\|_{X}\leq C \|f^\prime\|_{X}\big(1+\|f^\prime\|_{L^\infty}^2\big)\Big(\|g\|_{D}\| h\|_{X}+\|g\|_{X}\|h\|_D\Big).
   $$
  
     \end{enumerate}
   
   \end{proposition}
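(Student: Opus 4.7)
The plan is to control both the $L^\infty$-norm and the seminorm ($\|\cdot\|_{D}$ for $X=C^\star_K$, $\|\cdot\|_{s}$ for $X=C^s_K$) of $\mathcal{C}_f^{\theta,\Re}(g,h)$ and $\mathcal{C}_f^{\theta,\Im}(g,h)$ separately, using the elementary lower bound $y^2+[f(x+y)-f(x)]^2\geq y^2$ together with the observation that the numerators vanish at $y=0$ with order prescribed by the moduli of continuity of $g,h$ and, for the imaginary part, of $f$ itself. Compactness of $\operatorname{supp} f$ is what makes the outer truncation at $\pm M$ harmless, and throughout I would exploit the auxiliary bounded function $\phi(x,y)=(f(x+y)-f(x))/y=\int_0^1 f'(x+ty)\,dt$, which satisfies $\|\phi(\cdot,y)\|_{L^\infty}\leq \|f'\|_{L^\infty}$ and, by the average representation, $\|\phi(\cdot,y)\|_{X}\leq \|f'\|_{X}$ uniformly in $y$.

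For the pointwise estimate on the real part I would first write
$$|\mathcal{C}_f^{\theta,\Re}(g,h)(x)|\leq 2\int_0^M\frac{\omega_g(y)\,\omega_h(y)}{y}\,dy,$$
and, placing one factor against its $L^\infty$-bound, reduce to integrating the other against $dy/y$. In the Dini case this is controlled by $C\|g\|_{D}\|h\|_{L^\infty}(1+\ln M)$ via \eqref{L1} and \eqref{Imbed1}; in the H\"older case $\omega_g(y)\leq \|g\|_s y^s$ makes the integral convergent at the origin. Symmetrising produces the factor $\|g\|_{D}\|h\|_{X}+\|g\|_{X}\|h\|_{D}$. For the imaginary part, one bounds $|f(x+y)-f(x)|\leq \|f'\|_{L^\infty}|y|$ to extract the extra $\|f'\|_{L^\infty}$ and reduce to the same integral; this already explains where the prefactors $1$ and $\|f'\|_{L^\infty}$ come from.

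For the seminorm bound at two points $x_1,x_2$ with $\delta=|x_1-x_2|$, I would split the integration domain into $\{|y|<2\delta\}$ and $\{|y|\geq 2\delta\}$. The near-diagonal piece is absorbed by the $L^\infty$-type estimate restricted to this small window, using sub-additivity \eqref{Sub-ad} and either the direct inequality $\omega_g(2\delta)\leq 2^s\|g\|_s\delta^s$ (H\"older case) or $\int_0^{2\delta}\omega_g(r)/r\,dr\lesssim \|g\|_D$ times a logarithmic factor (Dini case). The far piece is treated by adding and subtracting intermediate integrands so as to separate three types of differences:
\begin{align*}
&\bigl[g(x_1+\theta y)-g(x_1)\bigr]-\bigl[g(x_2+\theta y)-g(x_2)\bigr],\\
&\bigl[h(x_1+y)-h(x_1)\bigr]-\bigl[h(x_2+y)-h(x_2)\bigr],\\
&\frac{1}{y^2+[f(x_1+y)-f(x_1)]^2}-\frac{1}{y^2+[f(x_2+y)-f(x_2)]^2}.
\end{align*}
The first two are pointwise bounded by $2\omega_g(\delta)$ and $2\omega_h(\delta)$, while the third is rewritten as $y^{-2}[\phi(x_2,y)^2-\phi(x_1,y)^2]/((1+\phi(x_1,y)^2)(1+\phi(x_2,y)^2))$, factorised via $a^2-b^2=(a-b)(a+b)$, and then controlled by $y^{-2}\|f'\|_{L^\infty}\omega_{f'}(\delta)$. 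Summing the three pieces and integrating over $|y|\geq 2\delta$ (where the extra $y^{-2}$ is compensated by the bilinear structure of the numerators) produces the prefactor $(1+\|f'\|_{L^\infty}\|f'\|_{X})$ for the real part.

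The main obstacle I foresee is the denominator difference, since it is the only place where one must track precisely how $f'$ enters the estimate, and where the distinction between the real and imaginary parts forces different prefactors. For the imaginary operator the same scheme applies but one carries an additional increment $f(x_i+y)-f(x_i)=y\,\phi(x_i,y)$; splitting $\phi(x_1,y)-\phi(x_2,y)$ off as a seminorm factor and bounding the remaining $\phi$'s in $L^\infty$ yields the prefactor $\|f'\|_{X}(1+\|f'\|_{L^\infty}^2)$ advertised in the statement. Once these prefactors are collected and the symmetric roles of $g,h$ are tracked (one modulus always paid for at scale $\delta$, the other left as an $X$-norm), the estimates take the exact form claimed, with a constant $C$ independent of $\theta\in[0,1]$ since $\theta$ enters only through $\omega_g(|\theta y|)\leq \omega_g(|y|)$.
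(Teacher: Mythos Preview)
Your approach is correct but genuinely different from the paper's. The paper never performs a near--far splitting in $y$: instead it brings the $X$-seminorm \emph{inside} the $y$-integral via the product rule \eqref{law2}/\eqref{lawX3}, writing
\[
\|\mathcal{C}_f^{\theta,\Re}(g,h)\|_X\le\int_{-M}^M\frac{\|\Delta_{\theta y}g\,\Delta_yh\|_X}{|y|}\,dy
+\int_{-M}^M|y|\,\|\Delta_{\theta y}g\,\Delta_yh\|_{L^\infty}\Big\|\frac{1}{y^2+(\Delta_yf)^2}\Big\|_X\,dy,
\]
and then exploiting the translation-invariance bound $\|\Delta_y h\|_X\le 2\|h\|_X$ (uniformly in $y$) together with $\|\Delta_{\theta y}g\|_{L^\infty}\le\omega_g(|y|)$. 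This produces the claimed prefactors in two lines, with the $\|f'\|_{L^\infty}\|f'\|_X$ arising solely from \eqref{frac1}. For the imaginary part the paper writes $\Delta_yf=y\int_0^1 f'(\cdot+\tau y)\,d\tau$ and applies the same product-rule mechanism to the triple product, which is where the extra $\|f'\|_{L^\infty}^2$ appears. Your Calder\'on--Zygmund style splitting at $|y|=2\delta$ recovers the same bounds but by a longer route; it has the advantage of yielding pointwise control of $\omega_{\mathcal{C}_f(g,h)}(\delta)$, while the paper's argument is shorter and treats $C^\star_K$ and $C^s_K$ uniformly without ever fixing $\delta$.

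One point in your write-up deserves tightening: the phrase ``$\int_0^{2\delta}\omega_g(r)/r\,dr\lesssim\|g\|_D$ times a logarithmic factor'' is misleading in the Dini case, since a bound of the form $C\|g\|_D$ for the near-piece at scale $\delta$ would leave $\int_0^1 dr/r$ divergent. The clean way to close the near-diagonal estimate is to use the monotonicity $\omega_g(y)\le\omega_g(2\delta)$ for $y\le 2\delta$, giving $\int_0^{2\delta}\omega_g(y)\omega_h(y)/y\,dy\le\omega_g(2\delta)\|h\|_D$, and then $\int_0^1\omega_g(2r)/r\,dr\le C\|g\|_D$; alternatively one may invoke $\omega_g(y)|\ln y|\le\|g\|_D$ (which the paper uses elsewhere, cf.\ the argument after \eqref{kaou11}) after Fubini. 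Either way no logarithmic loss survives, and your scheme goes through.
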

   \begin{proof}
   We shall first establish the result for the real part operator given by \eqref{BiCauchop}. First we note that one may  rewrite the expression using the  notation \eqref{Nota11} as follows
   $$
{\mathcal{C}}_f^{\theta,\Re}(g,h)(x)=\bigintsss_{-M}^M\frac{y\Delta_{\theta y}g(x)\Delta_yh(x)}{y^2+(\Delta_yf(x))^2}dy
   $$
   where we simply replace the notation $\Delta_y^-$ by $\Delta_y$.
   Using the law products \eqref{law2} and \eqref{lawX3} one obtains 
  \begin{eqnarray*}
 \|{\mathcal{C}}_f^{\theta,\Re}(g,h)\|_{X}&\le&\bigintsss_{-M}^{M}{\|\Delta_{\theta y}g\Delta_y h\|_{X} }\frac{dy}{|y|}\\
 &+&\bigintsss_{-M}^{M}{|y|\|\Delta_{\theta y}g\Delta_y h\|_{L^\infty} }\Big\|\frac{1}{y^2+(\Delta_yf)^2}\Big\|_{X} dy.
  \end{eqnarray*}
Using once again those law products  it comes
  \begin{eqnarray*}
 \|\Delta_{\theta y}g\Delta_y h\|_{X}&\le&   \|\Delta_{\theta y}g\|_{L^\infty}\|\Delta_y h\|_{X}+\|\Delta_{\theta y}g\|_{X}\|\Delta_y h\|_{L^\infty}\\
 &\le& \omega_{g}(|y|)\| h\|_{X}+2\|g\|_{X}\omega_{h}(|y|),
   \end{eqnarray*}
   where we have used that for $\theta\in[0,1], y\in\R$ 
  \begin{equation}\label{Rod1}
   \|\Delta_yh \|_{X}\le 2 \|h\|_{X},\quad \|\Delta_{\theta y}h\|_{L^\infty}\le \omega_{h}(|y|).
 \end{equation}
     
   Consequently
 \begin{equation}\label{tat1}
  \bigintsss_{-M}^{M}{\|\Delta_{\theta y}g\Delta_y h\|_{X} }\frac{dy}{|y|}\le  C\big(\| g\|_{D}\|h\|_{X}+\| h\|_{D}\|g\|_{X}\big).
 \end{equation}
   
   By the definition it is quite easy to check that for any function $\varphi\in X \cap L^\infty(\R)$
   \begin{eqnarray*}
 \Big\|\frac{1}{y^2+\varphi^2}\Big\|_{X}\le\frac{2\|\varphi\|_{L^\infty}}{y^4}\|\varphi\|_{X}.
   \end{eqnarray*}
   Hence we get
      \begin{eqnarray}\label{frac1}
  \nonumber  \Big\|\frac{1}{y^2+(\Delta_yf)^2}\Big\|_{X}&\le& 2 \frac{\|\Delta_yf\|_{L^\infty}}{y^4}\|\Delta_yf\|_{X}\\
   &\le& C y^{-2}\|f^\prime\|_{L^\infty}\|f^\prime\|_{X}
     \end{eqnarray}
     where we have used the inequalities
     $$
     \|\Delta_y f\|_{L^\infty}\le |y|\|f^\prime\|_{L^\infty}\quad \textnormal{and}\quad \omega_{\Delta_y f}(r)\le |y|\omega_{f^\prime}(r).
     $$
     Therefore we get in view of \eqref{Rod1},
     
           \begin{eqnarray*}
    \bigintsss_{-M}^{M}{|y|\|\Delta_{\theta y}g\Delta_y h\|_{L^\infty} }\Big\|\frac{1}{y^2+(\Delta_yf)^2}\Big\|_{X} dy&\le& C\|f^\prime\|_{L^\infty}\|f^\prime\|_X\|h\|_{L^\infty} \bigintsss_{-M}^M\frac{\omega_{g}(|y|)}{|y|} dy\\
     &\le&   C\|f^\prime\|_{L^\infty}\|f^\prime\|_X\|h\|_{L^\infty} \|g\|_{D}.
           \end{eqnarray*}    
      Combining this last estimate with \eqref{tat1} we find that
      $$
      \|{\mathcal{C}}_f^{\theta,\Re}(g,h)\|_{X}\le   C\Big(\| g\|_{D}\|h\|_{X}+\| h\|_{D}\|g\|_{X}+\|f^\prime\|_{L^\infty}\|f^\prime\|_X\|h\|_{L^\infty} \|g\|_{D}\Big).
             $$
    To deduce the result it is enough to use \eqref{Imbed17}.
               
We are left with the task of  estimating  the imaginary part which takes the form
 \begin{equation*}
   \mathcal{C}_f^{\theta,\Im}(g,h)(x)=\bigintsss_{-M}^M\frac{\Delta_yf(x)\Delta_{\theta y}g(x)\Delta_yh(x)}{y^2+(\Delta_yf(x))^2}dy.
\end{equation*}  
Note that we have dropped the sign minus before the integral which of course has no consequence on the computations.   
Using Taylor formula we get
$$
\Delta_yf(x)=y\int_0^1f^\prime(x+\tau y) d\tau
$$
and thus
\begin{equation*}
   \mathcal{C}_f^{\theta,\Im}(g,h)(x)=\bigintsss_{-M}^M\bigintsss_0^1\frac{yf^\prime(x+\tau y)\Delta_{\theta y}g(x)\Delta_yh(x)}{y^2+(\Delta_yf(x))^2}dyd\tau.
\end{equation*}
It suffices to reproduce the preceding   computations using in particular the estimates
$$
\|f^\prime(\cdot+\tau y)\Delta_{\theta y}g\Delta_yh\|_{L^\infty}\leq\|f^\prime\|_{L^\infty}\|h\|_{L^\infty}\omega_g(|y|)
$$
and
\begin{eqnarray*}
\|f^\prime(\cdot+\tau y)\Delta_{\theta y}g\Delta_yh\|_{X}\leq\|f^\prime\|_{L^\infty}\|\Delta_{\theta y}g\Delta_yh\|_{X}+\|f^\prime\|_{X}\|\Delta_{\theta y}g\Delta_yh\|_{L^\infty}\\
\leq \|f^\prime\|_{L^\infty}\Big(\omega_{g}(|y|)\| h\|_{X}+\|g\|_{X}\omega_{h}(|y|)\Big)+2\|f^\prime\|_{X}\|\|g\|_{L^\infty}\omega_h(|y|).
\end{eqnarray*}
This implies according to  Sobolev embedding \eqref{Imbed17}
\begin{eqnarray*}
\bigintsss_{-M}^M\bigintsss_0^1{\|f^\prime(\cdot+\tau y)\Delta_{\theta y}g\Delta_yh\|_X}\frac{dy}{|y|}d\tau\leq C \|f^\prime\|_{X}\Big(\|g\|_{D}\| h\|_{X}+\|g\|_{X}\|h\|_D\Big).
\end{eqnarray*}
Using \eqref{frac1}  one may easily get
\begin{eqnarray*}
   \bigintsss_{-M}^{M}{|y|\| f^\prime(\cdot+\tau y)\Delta_{\theta y}g\Delta_y h\|_{L^\infty} }\Big\|\frac{1}{y^2+(\Delta_yf)^2}\Big\|_{X} dy     &\le&   C\|f^\prime\|_{L^\infty}^2\|f^\prime\|_X\|h\|_{L^\infty} \|g\|_{D}      \end{eqnarray*} 
    which gives the desired result using Sobolev embeddings \eqref{Imbed17}. The proof of the proposition is now achieved.
   \end{proof}

The second kind  of Cauchy integrals that we have to deal with and  related to the integral terms  in \eqref{Derr} and \eqref{Derr1}  is  given by the following linear operators
$$
   T^{\alpha,\beta}_fg(x)=\textnormal{p.v.}\bigintsss_{\R}\frac{y \, g(\alpha x+\beta y)}{y^2+[f(x)+f(x+y)]^2} dy
   $$
   with $\alpha$ and $\beta$ two parameters. The continuity of these operators in classical Banach spaces is  not in general easy to establish  and could fail for some special cases. We point out that it is not our purpose in this exposition to implement a complete  study of those operators. A more complete theory may be achieved but this topics exceeds the scope of this paper and     we shall restrict ourselves to some special configurations that fit with the application to the aggregation equation.  Our result in this direction reads as follows. 
   \begin{theorem}\label{th1}
   Let $\alpha, \beta\in [0,1]$, $K$ be a compact set of $\R$ and  $f:\R\to\R_+$ be a compactly supported continuous positive function such that  $f^\prime\in C^\star_K$. 
   Then  the following assertions hold true.
   \begin{enumerate}
      
   \item The operator $T^{\alpha,\beta}_f:  C^\star_K\to L^\infty(\R)$ is well-defined and continuous
   $$
   \|T^{\alpha,\beta}_fg\|_{L^\infty}\le C  \Big(1+\|f^\prime\|_{L^\infty}^2+\|f^\prime\|_{L^\infty}\|f^\prime\|_{D}\Big) \|g\|_{D} $$
   with $C$  a  constant depending only on $K$  and not on $\alpha$ and $\beta$. 
      \item  The modified operator    $f^\prime T_f^{\alpha,\beta}:  C^\star_K\to C^\star_K$ is continuous. More precisely,  $$
   \|f^\prime T^{\alpha,\beta}_fg\|_{D}\le C  \|f^\prime\|_{D} \Big(C_\beta \ln_+(1/\|f^\prime\|_D) +\|f^\prime\|_{D}^{14}\Big) \|g\|_{D}
   $$
   with $C$ a constant depending only on $K$ and 
   \begin{equation*}
C_\beta\triangleq\left\lbrace
\begin{array}{l}
(1-\ln\beta), \quad \beta\in(0,1] \\
1,\quad \beta=0.
\end{array}
\right. \end{equation*}
\item Let $s\in(0,1)$ and assume that  $f^\prime \in C^s_K$, then $f^\prime T^{\alpha,\beta}_f:  C^s_K\to C^s_K(\R)$ is well-defined and continuous. More precisely,  there exists a constant $C$ depending only on the compact $K$ and $s$ such that
\begin{equation}\label{hmi23}
\|f^\prime T^{\alpha,\beta}_fg\|_s\leq C\Big(C_\beta  \|f^\prime\|_{L^\infty}^{\frac{1}{1+s}}+\|f^\prime\|_s^{14}\Big)  \|g\|_{s}.\end{equation}
In addition, one has the refined estimate
\begin{eqnarray}\label{hmi24}
\nonumber\|f^\prime T_f^{\alpha,\beta}g\|_s&\leq &C \|f^\prime\|_{L^\infty}^{\frac{1}{2+s}}\Big[\|f^\prime\|_s^{\frac{1}{2+s}}C_\beta+\|f^\prime\|_s^{14}\Big] \|g\|_{s} \\
&+&C\|g\|_{L^\infty}^{\frac{1}{2+s}}\|g\|_s^{\frac{1+s}{2+s}}\|f^\prime\|_{s},
\end{eqnarray}
 with
  \begin{equation*} 
C_\beta\triangleq\left\lbrace
\begin{array}{l}
\beta^{-\frac12}, \quad \beta\in(0,1] \\
1,\quad \beta=0.
\end{array}
\right. \end{equation*}
   \end{enumerate}
   \end{theorem}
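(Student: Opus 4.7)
The plan is to exploit the positivity of $f$ together with the sum structure $f(x)+f(x+y)$ in the denominator (as opposed to the difference appearing in the classical Cauchy operator) and the delicate trade-off provided by Lemma~\ref{lem1} between pointwise values of $f^\prime$ and of $f$. The key observation is that the kernel $K(x,y)\triangleq y/[y^2+(f(x)+f(x+y))^2]$ is odd in $y$ modulo the dependence on $f(x+y)$; the standard p.v.\ symmetrization $y\leftrightarrow -y$ therefore converts $T^{\alpha,\beta}_f g(x)$ into an absolutely convergent integral whose integrand contains the difference $f(x-y)-f(x+y)$ (controlled by $2|y|\,\|f^\prime\|_{L^\infty}$) and the cross factor $2f(x)+f(x-y)+f(x+y)$ (bounded pointwise, and also Dini/Hölder via the regularity of $f$). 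The compact-support hypothesis on $g$ (and thus on $y\mapsto g(\alpha x+\beta y)$ once $x\in K$) ensures all integrals are effectively taken over a bounded set in $y$, so the analysis is essentially local.

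For item (1), I would split $g(\alpha x+\beta y)=g(\alpha x)+[g(\alpha x+\beta y)-g(\alpha x)]$. The difference piece is estimated directly by $\int_{-M}^{M}\omega_g(|y|)/|y|\,dy\lesssim \|g\|_D$ using the crude lower bound $y^2+(f(x)+f(x+y))^2\ge y^2$. For the $g(\alpha x)$ piece, I symmetrize the p.v.\ and apply the identity $A_-^{-1}-A_+^{-1}=(A_+-A_-)/(A_+A_-)$ with $A_\pm=y^2+(f(x)+f(x\pm y))^2$, factoring $A_+-A_-=[f(x+y)-f(x-y)][2f(x)+f(x+y)+f(x-y)]$. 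Splitting the resulting convergent integral at the scale $y\sim f(x)$ (using $A_+A_-\ge f(x)^4$ on $|y|\le f(x)$ and $A_+A_-\ge y^4$ on $|y|>f(x)$), combined with $\|f\|_{L^\infty}\le C\|f^\prime\|_{L^\infty}$ from the compact support, yields exactly the claimed polynomial dependence on $\|f^\prime\|_{L^\infty}$ and $\|f^\prime\|_D$.

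For items (2) and (3), I would estimate the modulus of continuity using the standard split
\begin{equation*}
f^\prime(x)T^{\alpha,\beta}_f g(x)-f^\prime(x^\prime)T^{\alpha,\beta}_f g(x^\prime)=[f^\prime(x)-f^\prime(x^\prime)]\,T^{\alpha,\beta}_f g(x)+f^\prime(x^\prime)\,[T^{\alpha,\beta}_f g(x)-T^{\alpha,\beta}_f g(x^\prime)].
\end{equation*}
The first summand is controlled by the $L^\infty$ bound of item~(1) times the Dini (or Hölder) modulus of $f^\prime$. For the second, I decompose the integral difference at the scale $|x-x^\prime|$: on $|y|<2|x-x^\prime|$, bound each piece separately by a localized version of item (1); on $|y|>2|x-x^\prime|$, form finite differences of the kernel in $x$, producing integrals whose singular parts contain inverse-power factors of $f(x)$. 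The decisive input is Lemma~\ref{lem1}, which allows the outer factor $f^\prime(x^\prime)$ to absorb these singularities: $|f^\prime|\lesssim \|f^\prime\|_D/\ln_+(\|f^\prime\|_D/f(x))$ kills logarithmic divergences in the Dini case, while $|f^\prime|\lesssim \|f^\prime\|_s^{1/(1+s)}f^{s/(1+s)}$ kills algebraic divergences in the Hölder case, at the cost of fractional powers. The constant $C_\beta$ arises from the rescaling $y\mapsto\beta y$ when transferring the modulus of $g$ to the variable $y$: the $1-\ln\beta$ comes from an $L^1$-type balance in the Dini case, the $\beta^{-1/2}$ from an $L^2$-type balance in the Hölder case. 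The refined estimate \eqref{hmi24} is then obtained by interpolating $\|g\|_s^{1-\theta}\|g\|_{L^\infty}^{\theta}$ at the critical exponent $\theta=1/(2+s)$ that balances the near- and far-field contributions. The main obstacle, and the source of the large exponents $\|f^\prime\|_X^{14}$, is the combinatorial bookkeeping: many iterated applications of the product laws \eqref{law2} and \eqref{lawX3} combine with repeated uses of Lemma~\ref{lem1}, and no single step is conceptually difficult, but controlling cleanly the accumulated powers of $\|f^\prime\|_X$ is the most delicate part of the argument.
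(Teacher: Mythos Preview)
Your proposal is essentially correct and follows the same overall architecture as the paper: symmetrize in $y$ to produce an absolutely convergent integral, factor $A_+-A_-=[f(x+y)-f(x-y)][2f(x)+f(x+y)+f(x-y)]$, then for parts~(2)--(3) split at the scale $r=|x-x^\prime|$ into interior/exterior pieces and invoke Lemma~\ref{lem1} to let the prefactor $f^\prime$ absorb the singular dependence on $f(x)$. Two points deserve sharpening.

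First, in part~(1) your crude dyadic split at $y\sim f(x)$ with the bounds $A_+A_-\ge f(x)^4$ (near) and $A_+A_-\ge y^4$ (far) is not quite enough: on the far region the contribution from the cross factor $2f(x)+f(x+y)+f(x-y)$ still leaves an integrand of order $\|f^\prime\|_{L^\infty}^2/|y|$, hence a logarithmic divergence in $1/f(x)$. The paper cures this by the further decomposition $2f(x)+f(x+y)+f(x-y)=4f(x)+\psi(x,y)$ with the second-order bound $|\psi(x,y)|\le 2|y|\,\omega_{f^\prime}(|y|)$; the $4f(x)$ piece is handled by the scale-invariant change of variables $y=f(x)z$ (producing the large powers of $\|f^\prime\|$), and the $\psi$ piece yields the $\|f^\prime\|_{L^\infty}\|f^\prime\|_D$ term via $\int\omega_{f^\prime}(y)/y\,dy$. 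This split is also what structures the Dini/H\"older estimates in parts~(2)--(3).

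Second, your account of the refined estimate~\eqref{hmi24} is incomplete: interpolation on $g$ alone does not produce the factor $\|f^\prime\|_{L^\infty}^{1/(2+s)}$. The paper obtains it by applying the H\"older version of Lemma~\ref{lem1} with exponent $s^\prime=s/2$ rather than $s$, then using $\|f^\prime\|_{s/2}\le C\|f^\prime\|_{L^\infty}^{1/2}\|f^\prime\|_s^{1/2}$ and $\beta^{-s/(2+s)}\le\beta^{-1/2}$; the interpolation on $g$ you mention supplies the last line of~\eqref{hmi24}.
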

\begin{proof}

 To alleviate the notation we shall along this proof write $T_fg$ instead of $T_f^{\alpha,\beta}g$.
 \vspace{0,3cm}
 
${\bf 1)}$ By symmetrizing we get
\begin{eqnarray}\label{Eq10}
\nonumber T_f g(x)&=&\bigintsss_{0}^{+\infty}\frac{y \, \big[g(\alpha x+\beta y)-g(\alpha x-\beta y)\big]}{y^2+[f(x)+f(x+y)]^2} dy\\
\nonumber &+&\lim_{\varepsilon\to0}\bigintsss_{\varepsilon}^{+\infty}\frac{y \,g(\alpha x-\beta y)\big[f(x-y)-f(x+y)\big]\big[\Delta_y^+f(x)+\Delta_{-y}^+f(x)\big]}{\Big(y^2+[\Delta_{y}^+f(x)]^2\Big) \Big(y^2+[\Delta_{-y}^+f(x)]^2\Big)} dy\\
&\triangleq&T_f^1g(x)+T_f^2 g(x).
\end{eqnarray}
Without loss of generality we can assume that   $K=[-1,1]$ and $\textnormal{supp}{g}\subset [-1,1]$ and deal only with     $x\geq0$. We shall distinguish two cases  $0\le \alpha x\leq2$ and $\alpha x\geq2$. In the first case reasoning on the support of $g$ we simply  get 
$$
T_f^1 g(x)=\bigintsss_{\{0\leq \beta y\leq 3\}}\frac{y \, \big[g(\alpha x+\beta y)-g(\alpha x-\beta y)\big]}{y^2+[f(x)+f(x+y)]^2} dy.
$$
Hence we get by the  definition of the modulus of continuity, a change of variables and \eqref{L1}
\begin{eqnarray}\label{tre1}
\nonumber |T_f^1g(x)|&\leq &\bigintsss_{\{0\leq \beta y\leq 3\}}\frac{ \omega_g(2\beta y)}{y} dy\\
&\le& C\|g\|_{D}.
\end{eqnarray}
Coming back to the case $\alpha x\geq 2$ one may write \begin{eqnarray*}
|T_f^1 g(x)|
&\le&\bigintsss_{\{\alpha x-1\le\beta y\leq 1+\alpha x\}}\frac{ \omega_g( 2\beta y)}{y} dy\\
&\le&2\|g\|_{L^\infty}\int_{\alpha x-1}^{1+\alpha x}\frac{1}{y} dy\\
&\leq& \|g\|_{L^\infty}\ln\Big(\frac {1+\gamma}{-1+\gamma}\Big), \quad \gamma={\alpha x}\geq2\\
&\le& C \|g\|_{L^\infty}.
\end{eqnarray*}
Combining this last inequality with \eqref{tre1}  we  deduce that
\begin{equation}\label{HK1}
\|T_f^1 g\|_{L^\infty}\leq C\|g\|_{D}.
\end{equation}
For the second term $T_f^2 g$  we split it into two parts  as follows  
\begin{eqnarray}\label{lila90}
\nonumber T_f^2 g(x)&=&\lim_{\varepsilon\to0}4f(x)\bigintsss_{\varepsilon}^{+\infty}\frac{y \,g(\alpha x-\beta y)\big[f(x-y)-f(x+y)\big]}{\Big(y^2+[f(x)+f(x+y)]^2\Big) \Big(y^2+[f(x)+f(x-y)]^2\Big)} dy\\
\nonumber &+&\bigintsss_{0}^{+\infty}\frac{y \,g(\alpha x-\beta y)\big[f(x-y)-f(x+y)\big]\psi(x,y)}{\Big(y^2+[f(x)+f(x+y)]^2\Big) \Big(y^2+[f(x)+f(x-y)]^2\Big)} dy\\
&\triangleq&T_f^{2,1}g(x)+T_f^{2,2}g(x)
\end{eqnarray}
with
\begin{eqnarray*}
\psi(x,y)&=&f(x+y)+f(x-y)-2f(x)\\
&=&y\int_0^1\big[f^\prime(x+\theta y)-f^\prime(x-\theta y)\big] d\theta.
\end{eqnarray*}
The first term $T_f^{2,1}g$ is easily estimated. Indeed, one can assume that $f(x)>0$, otherwise the integral vanishes. Thus  using the mean value theorem and a change of variables we obtain
\begin{eqnarray}\label{embedd11}
\nonumber|T_f^{2,1} g(x)|&\le &8\|g\|_{L^\infty}\|f^\prime\|_{L^\infty}f(x)\bigintsss_{0}^{+\infty}\frac{y^2}{\Big(y^2+[f(x)]^2\Big)^2} dy\\
\nonumber&\leq &8\|g\|_{L^\infty}\|f^\prime\|_{L^\infty}\bigintsss_{0}^{+\infty}\frac{y^2}{\big(y^2+1\big)^2} dy\\
&\leq& C \|g\|_{L^\infty}\|f^\prime\|_{L^\infty}.
\end{eqnarray}
As to  the  term $T_f^{2,2}$ straightforward arguments yield
\begin{eqnarray*}
|T_f^{2,2}g(x)|&\leq&8\|g\|_{L^\infty}\|f\|_{L^\infty}^2\bigintsss_{y\geq\frac12}\frac{1}{y^3} dy+2\|g\|_{L^\infty}\|f^\prime\|_{L^\infty}\bigintsss_{0}^{\frac12}\frac{ |\psi(x,y)|}{y^2} dy\\
&\leq& C\|g\|_{L^\infty}\Bigg(\|f\|_{L^\infty}^2+C\|f^\prime\|_{L^\infty}\bigintsss_{0}^{\frac12}\frac{ \omega_{f^\prime}(2y)}{y}dy\Bigg)\\
&\leq& C\|g\|_{L^\infty}\Big(\|f^\prime\|_{L^\infty}^2+C\|f^\prime\|_{L^\infty}\|f^\prime\|_D\Big)
\end{eqnarray*}
where we have used he  fact
$$
|\psi(x,y)|\le 2y \omega_{f^\prime}(2y)|.
$$
Consequently we obtain
\begin{equation}\label{Eq6}
\|T_f^{2}g\|_{L^\infty}\le C\|g\|_{L^\infty}\Big(\|f^\prime\|_{L^\infty}^2+\|f^\prime\|_{L^\infty}\|f^\prime\|_{D}+\|f^\prime\|_{L^\infty}\Big).
\end{equation}
Putting together this estimate with \eqref{embedd11} and \eqref{Imbed1} we obtain the desired estimate.
\vspace{0,3cm}

${\bf{2)}}$ First, recall from the point ${\bf{(1)}}$ of this proof  the following  decomposition
\begin{equation}\label{splitjun1}
T_f g(x)=T_f^1 g(x)+T_f^{2,1}g(x)+T_f^{2,2}g(x).
\end{equation}
The second term is more easier to deal with and  one has
\begin{eqnarray}\label{Eqw23}
\|T_f^{2,1}g\|_{D}\le C\|g\|_D\|f^\prime\|_{D}(1+\|f^\prime\|_{L^\infty}^{13}).
\end{eqnarray}
This implies in view of the law products \eqref{law2} and \eqref{embedd11} that
\begin{eqnarray}\label{EqwZZ23}
\|T_f^{2,1}g\|_{D}\le C\|g\|_D\|f^\prime\|_{D}(\|f^\prime\|_{L^\infty}+\|f^\prime\|_{L^\infty}^{14}).
\end{eqnarray}
To establish \eqref{Eqw23} we first note that when $f(x)=0$ then $T_f^{2,1}g(x)=0$. However for $f(x)>0$,  using the mean value theorem and a change of variables  $y\to f(x)y$ we get
\begin{equation}\label{xident1}
T_f^{2,1}g(x)=-4\bigintsss_{0}^{+\infty}\frac{y^2 \,g(\alpha x-\beta f(x) y)\int_0^1[f^\prime(x+\theta f(x) y)+f^\prime(x-\theta f(x) y)]d\theta}{\varphi(x,y)\varphi(x,-y)} dy
\end{equation}
with 
$$
\varphi(x,y)=y^2+\Big[2+y\int_0^1f^\prime(x+\theta f(x) y)d\theta\Big]^2.
$$
Observe that the identity \eqref{xident1} is meaningful even for $f(x)=0$ and we can check easily that it vanishes. This follows from the fact that  owing to the positivity of $f$ when $f(x)=0$ then necessary $f^\prime(x)=0.$
To alleviate the expressions we introduce the functions
$$
\mathcal{N}_1(x,y)= g\big(\alpha x-\beta f(x) y\big)\int_0^1\big[f^\prime\big(x+\theta f(x) y\big)+f^\prime\big(x-\theta f(x) y\big)\big]d\theta
$$
and
$$
\mathcal{D}_1(x,y)=\varphi(x,y)\varphi(x,-y).
$$
Then by \eqref{law2}  we obtain for fixed $y$
\begin{eqnarray*}
\|\mathcal{N}_1(\cdot,y)\|_{D}&\leq& 2 \|g\circ\big(\alpha \hbox{Id}-\beta y f\big)\|_{D}\|f^\prime\|_{L^\infty}\\
&+&\|g\|_{L^\infty}\int_0^1\big[\|f^\prime\circ\big(\hbox{Id}+\theta y f \big)\|_{D}+\|f^\prime\circ\big(\hbox{Id}-\theta y f \big)\|_{D}\big]d\theta.
\end{eqnarray*}
Using the composition law \eqref{comp3} we get successively 
$$
\|g\circ\big(\alpha \hbox{Id}-\beta y f\big)\|_{D}\le C \|g\|_{D} \Big(1+\ln_+\big(\alpha +\beta \|f^\prime\|_{L^\infty} y\big)\Big)
$$
and
$$
\|f^\prime\circ\big(\hbox{Id}+\theta y f \big)\|_{D}\le C \|f^\prime\|_{D}\Big(1+\ln\big(1+\theta \|f^\prime\|_{L^\infty} y\big)\Big).
$$
This implies that
\begin{eqnarray*}
\|\mathcal{N}_1(\cdot,y)\|_{D}
&\le&C \|g\|_{D}\Big(1+\ln_+\big(\alpha +\beta \|f^\prime\|_{L^\infty} y\big)\Big)\|f^\prime\|_{L^\infty}\\
&+&C\|g\|_{L^\infty}\|f^\prime\|_{D}\int_0^1\Big(1+\ln\big(1+\theta \|f^\prime\|_{L^\infty} y\big)\Big)d\theta.
\end{eqnarray*}
Since
$$
\ln(1+\Pi_{i=1}^n x_i)\leq \sum_{i=1}^n\ln(1+x_i), \forall x_i\geq0
$$
then 
\begin{equation}\label{G11}
\|\mathcal{N}_1(\cdot,y)\|_{D}\leq C\|g\|_{D}\|f^\prime\|_{D}\Big(1+\ln_+\|f^\prime\|_{L^\infty}+\ln_+ y\Big).
\end{equation}
On the other hand it is plain that 
\begin{equation}\label{qwf1}
\|\mathcal{N}_1(\cdot,y)\|_{L^\infty}\leq C\|g\|_{L^\infty}\|f^\prime\|_{L^\infty}.
\end{equation}
To estimate $\frac{1}{\mathcal{D}_1(\cdot,y)}$ in Dini space $C^\star_K$ we come back to the definition which implies that
\begin{equation}\label{G12Z}
\|1/\mathcal{D}_1(\cdot,y)\|_{D}\leq  \|\mathcal{D}_1(\cdot,y)\|_{D}\|1/\mathcal{D}_1(\cdot,y)\|_{L^\infty}^2.
\end{equation}
Now using  the law product \eqref{law2} we deduce that
\begin{eqnarray*}
\|\mathcal{D}_1(\cdot,y)\|_{D}&\leq& \|\varphi(\cdot,y)\|_{L^\infty}\|\varphi(\cdot,-y)\|_{D}+\|\varphi(\cdot,y)\|_{D}\|\varphi(\cdot,-y)\|_{L^\infty}.
\end{eqnarray*}
From simple calculations we get
\begin{eqnarray*}
\|\varphi(\cdot,\pm y)\|_{L^\infty}&\leq& y^2+\big(2+y\|f^\prime\|_{L^\infty}\big)^2\\
&\le&C (1+\|f^\prime\|_{L^\infty}^2)(1+y^2).
\end{eqnarray*}
  Applying \eqref{law2} and \eqref{comp3} to the expression of $\varphi$ it is quite easy to check that
  
  \begin{eqnarray*}
\|\varphi(\cdot,\pm y)\|_{D}&\leq& C\big(1+y\|f^\prime\|_{L^\infty}\big) y\int_0^1\|f^\prime\circ(\hbox{Id}\pm \theta y f )\|_{D} d\theta \\
&\le& C\big(y+y^2\|f^\prime\|_{L^\infty}\big)\|f^\prime\|_{D}\Big(1+\ln_+\|f^\prime\|_{L^\infty}+\ln_+ y\Big).
 \end{eqnarray*}
 Thus combining the preceding estimates we find 
 
\begin{eqnarray}\label{Hamo1}
\nonumber \|\mathcal{D}_1(\cdot,y))\|_{D}&\le& C\big(y+y^2\|f^\prime\|_{L^\infty}\big)\|f^\prime\|_{D}\Big(1+\ln_+\|f^\prime\|_{L^\infty}+\ln_+ y\Big)(1+\|f^\prime\|_{L^\infty}^2)(1+y^2)\\
&\le& C\big(1+y^4\ln_+y)\big)\|f^\prime\|_{D}\Big(1+\ln_+\|f^\prime\|_{L^\infty}\Big)(1+\|f^\prime\|_{L^\infty}^3).
 \end{eqnarray}
 Now we shall use the following inequalities that can be proved in a straightforward way: for any $y\in \R_+$  and for any  $a,b\in\R$ with $|a|\leq b$, one has
 
 \begin{eqnarray*}
 y^2+(2+ya)^2&\geq& y^2+(2-ya)^2\\
 &\geq&\frac{1+y^2}{1+a^2}\\
 &\geq&\frac{1+y^2}{1+b^2}\cdot
\end{eqnarray*}
It follows that
  \begin{eqnarray}\label{Eqw1}
 \|1/\varphi(\cdot,\pm y)\|_{L^\infty}&\leq& 
 \frac{1+\|f^\prime\|_{L^\infty}^2}{1+y^2}.
\end{eqnarray}
  Putting  this estimate together  with \eqref{Hamo1} and \eqref{G12Z}  yields
  \begin{eqnarray*}
\|1/\mathcal{D}_1(\cdot,y)\|_{D}
&\le& C\frac{1+y^4\ln_+y}{1+y^8}\|f^\prime\|_{D}\Big(1+\ln_+\|f^\prime\|_{L^\infty}\Big)(1+\|f^\prime\|_{L^\infty}^{11}) \\
&\le& C \frac{1+\ln_+y}{1+y^4}\|f^\prime\|_{D}(1+\|f^\prime\|_{L^\infty}^{12})  .
\end{eqnarray*}
    Therefore we obtain using \eqref{G11}, \eqref{qwf1} and \eqref{Eqw1} 
       \begin{eqnarray*}
\|(\mathcal{N}_1/\mathcal{D}_1)(\cdot,y)\|_{D}&\leq&\|(\mathcal{N}_1(\cdot,y)\|_{L^\infty}\|1/\mathcal{D}_1)(\cdot,y)\|_{D}+\|(\mathcal{N}_1(\cdot,y)\|_{D}\|1/\mathcal{D}_1)(\cdot,y)\|_{L^\infty}\\
&\le&C \|g\|_{L^\infty}\|f^\prime\|_{L^\infty} \frac{1+\ln_+y}{1+y^4}\|f^\prime\|_{D}(1+\|f^\prime\|_{L^\infty}^{12})\\
&+&C\|g\|_{D}\|f^\prime\|_{D}\frac{1+\ln_+\|f^\prime\|_{L^\infty}+\ln_+ y}{1+y^4}(1+\|f^\prime\|_{L^\infty}^{4})\\
&\le&C \|g\|_{D}\|f^\prime\|_{D}\frac{1+\ln_+y}{1+y^4}(1+\|f^\prime\|_{L^\infty}^{13}) .
\end{eqnarray*}
%
Plugging  this estimate into \eqref{xident1} we find
\begin{eqnarray}\label{June4w}
\nonumber \|T_f^{2,1}g\|_{D}&\le& 4\int_{0}^{+\infty} y^2 \|(\mathcal{N}_1/\mathcal{D}_1)(\cdot,y)\|_{D} dy\\
&\le&
 C \|g\|_{D}\|f^\prime\|_{D}(1+\|f^\prime\|_{L^\infty}^{13}) .
\end{eqnarray}
This concludes the proof of \eqref{Eqw23}.

Now we intend to estimate $\|T_f^1g\|_{D}$ which is more tricky. Let  $r\in(0,1)$ and $x_1,x_2\in\R$ such that $|x_1-x_2|\le  r$. We shall decompose $T_f^1 g$ as follows 
\begin{equation}\label{jul34}
T_f^1g=T_{f,\textnormal{int}}^{ r,1}\,g+T_{f,\textnormal{ext}}^{ r,1}\,g
\end{equation}
with
$$
T_{f,\textnormal{int}}^{ r,1}\,g(x)=\bigintss_0^r\frac{y \, \big[g(\alpha x+\beta y)-g(\alpha x-\beta y)\big]}{y^2+[f(x)+f(x+y)]^2} dy
$$
and
$$
T_{f,\textnormal{ext}}^{ r,1}\,g(x)=\bigintss_{r}^{+\infty}\frac{y \, \big[g(\alpha x+\beta y)-g(\alpha x-\beta y)\big]}{y^2+[f(x)+f(x+y)]^2} dy.
$$
From the sub-additivity of the modulus of continuity we get
\begin{eqnarray*}
|f^\prime(x)T_{f,\textnormal{int}}^{ r,1}\,g(x)|&\leq& C |f^\prime(x)|\int_0^r\frac{y \omega_g( y)}{y^2+[f(x)]^2} dy\\
&\leq&C|f^\prime(x)|\int_0^r\frac{ \omega_g( y)}{y+f(x)} dy.
\end{eqnarray*}
Using Lemma \ref{lem1} we find
\begin{eqnarray}\label{TX02}
|f^\prime(x)T_{f,\textnormal{int}}^{ r,1}\,g(x)|
&\leq&C\frac{\gamma(f)}{1+\ln_+(\frac{1}{f(x)})}\int_0^r\frac{ \omega_g( y)}{y+f(x)} dy
\end{eqnarray}
where 
\begin{equation}\label{gammaZ}
\gamma(f)\triangleq\|f^\prime\|_{D}\big(1+\ln_+(1/\|f^\prime\|_D)\big).
\end{equation}
Now we claim that: for $y\in (0,1)$
\begin{equation}\label{gkm11}
\sup_{\EE>0}\frac{1}{1+\ln_+(1/ \EE)}\frac{1}{y+\EE}\leq \frac{C}{y(1+|\ln y|)}+\frac{1}{1+y}
\end{equation}
for some universal constant $C>0$. To prove this result it is enough to get
\begin{equation*}
\sup_{\EE\in(0,1)}\frac{1}{1+\ln(1/ \EE)}\frac{1}{y+\EE}\leq \frac{C}{y(1+|\ln y|)}\cdot
\end{equation*}
Indeed, we shall  consider the two cases $\EE\geq\sqrt{y}$ and  $\EE\leq\sqrt{y}$. In the first case we observe 
$$
\frac{1}{y+\EE}\le\frac{1}{\sqrt{y}}\quad\hbox{and}\quad \frac{1}{1+\ln(1/ \EE)}\leq1
$$
which implies that
\begin{eqnarray*}
\frac{1}{1+\ln(1/ \EE)}\frac{1}{y+\EE}&\le&\frac{1}{\sqrt{y}}\\
&\leq& \frac{C}{y(1+|\ln y|)}.
\end{eqnarray*}
However in the second case  $\EE\leq\sqrt{y}$ we write simply that
$$
\frac{1}{y+\EE}\le\frac{1}{{y}}\quad\hbox{and}\quad \frac{1}{1+\ln(1/ \EE)}\leq \frac{1}{1+\frac12\ln(1/y)}
$$
which gives the desired result. Coming back to \eqref{TX02} and using \eqref{gkm11} we deduce that
\begin{eqnarray}\label{Eq11}
\nonumber\sup_{x}\big|f^\prime(x) T_{f,\textnormal{int}}^{ r,1}\,g(x)\big|&\le& C{\gamma(f)}\bigintsss_0^r\sup_{x}\frac{\omega_g( y)}{\big(1+\ln_+(\frac{1}{f(x)})\big)(y+f(x))} dy\\
&\le& C\gamma(f)
\Bigg(\bigintsss_0^r\frac{ \omega_g( y)}{y(1+|\ln y|)} dy+\bigintsss_0^r\frac{ \omega_g( y)}{1+y} dy\Bigg).
\end{eqnarray}
Consequently
\begin{eqnarray*}
\sup_{|{x}_1-{x}_2|\le  r} \Big|f^\prime({x}_1)T_{f,\textnormal{int}}^{ r,1}\,g({x}_1)-f^\prime({x}_2)T_{f,\textnormal{int}}^{ r,1}\,g({x}_2)\Big|&\leq & C\gamma(f)
\Bigg(\bigintsss_0^r\frac{ \omega_g( y)}{y(1+|\ln y|)} dy+\bigintsss_0^r{ \omega_g( y)} dy\Bigg).
\end{eqnarray*}
Therefore we get by  using Fubini's theorem
\begin{eqnarray*}
\bigintsss_0^1\sup_{|{x}_1-{x}_2|\le  r}\Big|f^\prime({x}_1)T_{f,\textnormal{int}}^{ r,1}\,g({x}_1)-f^\prime({x}_2)T_{f,\textnormal{int}}^{ r,1}\,g({x}_2)\Big|\frac{d r}{ r}&\leq & C\gamma(f)\bigintsss_0^1
\frac{ \omega_g( y)}{y}\frac{|\ln y|}{(1+|\ln y|)} dy\\
&+&C\gamma(f)\bigintsss_0^1
|\ln y|{ \omega_g( y)} dy\\
&\le&C\gamma(f)\|g\|_{D}.
\end{eqnarray*}

As to $T_{f,\textnormal{ext}}^{ r,1}g$ we write
\begin{eqnarray}\label{June7}
\nonumber f^\prime(x_1)T_{f,\textnormal{ext}}^{ r,1}g(x_1)- f^\prime(x_2)T_{f,\textnormal{ext}}^{ r,1}g(x_2)&=&\big(f^\prime(x_1)-f^\prime(x_2)\big)T_{f,\textnormal{ext}}^{ r,1}g(x_2)\\
\nonumber &+&f^\prime(x_1)\Big(T_{f,\textnormal{ext}}^{ r,1}g(x_1)-T_{f,\textnormal{ext}}^{ r,1}g(x_2)\Big)\\
&\triangleq&\mu_1(x_1,x_2)+\mu_2(x_1,x_2).
\end{eqnarray}
Our current goal is to prove that for $j\in\{1,2\}$
$$
\bigintsss_0^1\sup_{|x_1-x_2\leq  r}\frac{\mu_j(x_1,x_2)}{ r} d r
$$
is well-estimated.
For the first term we use  \eqref{HK1} leading to 
\begin{eqnarray*}
\bigintsss_0^1\sup_{|x_1-x_2\leq r}\frac{\mu_1(x_1,x_2)}{r} dr&\leq&\|T_{f,\textnormal{ext}}^{ r,1}g\|_{L^\infty}\bigintsss_0^1\frac{\omega_{f^\prime}(r)}{r} dr\\
&\le& C\|g\|_{D}\|f^\prime\|_{D}.
\end{eqnarray*}

The second term is more subtle.  First note that if $|x_1-x_2|\leq1$ then the quantity  $f^\prime(x_1)T_{f,\textnormal{ext}}^{r,1}g(x_1)-f^\prime(x_2)T_{f,\textnormal{ext}}^{r,1} g(x_2)$ vanishes for $x_1, x_2$ outside a compact set related only to the support of $f$. Therefore  the integrals defining $\mu_2(x_1,x_2)$ may be restricted to  the set $\{\beta r \le\beta y\le B\}$ with $B$ being some  constant related to the size of the supports of $f$ and $g$, and without loss of generality we can take $B=1$. It follows that
\begin{eqnarray}\label{T002}
\nonumber \mu_2(x_1,x_2)&=&f^\prime(x_1)\bigintsss_{\{\beta  r \le\beta y\leq 1\}}\frac{y \, \big[\widehat{g}(x_1,y)-\widehat{g}(x_2,y)\big]}{y^2+[f(x_1)+f(x_1+y)]^2} dy\\
\nonumber&+&f^\prime(x_1)\bigintsss_{\{\beta  r \le\beta y\leq 1\}}\frac{y \, \widehat{g}(x_2,y)\big[ \Delta ^+_yf(x_2)-\Delta^+_yf(x_1)\big]\big[\Delta^+_yf(x_2)+\Delta^+_yf(x_1\big]}{\big(y^2+[\Delta^+_yf(x_1)]^2\big)\big(y^2+[\Delta^+_yf(x_2)]^2\big)} dy\\
&\triangleq&\mu_{2,1}(x_1,x_2)+\mu_{2,2}(x_1,x_2),
\end{eqnarray}
with
$$
\widehat{g}(x,y)\triangleq g(\alpha x+\beta y)-g(\alpha x-\beta y)\quad\hbox{and}\quad  \Delta^+_yf(x)= f(x+y)+f(x).
$$
To estimate $\mu_{2,1}$ we shall use the next inequality which is  a consequence of Lemma \ref{lem1},
\begin{eqnarray}\label{T4}
\nonumber \int_0^L\frac{|f^\prime(x)|}{y+f(x)} dy&=& |f^\prime(x)|\ln\Big(1+\frac{L}{f(x)}\Big)\\
&\le&C\gamma(f)\big(1+\ln_+L)
\end{eqnarray}
with $C$ an absolute constant. This implies that
\begin{eqnarray*}
\mu_{2,1}(x_1,x_2)&\le&C\omega_g(\alpha|x_1-x_2|)|f^\prime(x_1)|\int_{0}^{ 1/\beta}\frac{1}{y+f(x_1)} dy\\
&\le&C\omega_g(|x_1-x_2|)\gamma(f)(1+|\ln \beta|).
\end{eqnarray*}
Consequently, we find that
\begin{eqnarray*}
\sup_{|x_1-x_2|\leq  r}|\mu_{2,1}(x_1,x_2)|&\le&C\omega_g( r)\gamma(f)\big(1+|\ln \beta|\big)\end{eqnarray*}
and therefore
\begin{eqnarray*}
\bigintsss_0^1\sup_{|x_1-x_2|\leq  r}|\mu_{2,1}(x_1,x_2)|\frac{d r}{ r}&\le&C\gamma(f) \big(1+|\ln \beta|\big) \|g\|_{D}.
\end{eqnarray*}
We emphasize that for $\beta=0$ one can still get an estimate since $\mu_{2,1}(x_1,x_2)=0$ and therefore we get the desired estimate.\\
Now we shall move to the estimate of  $\mu_{2,2}(x_1,x_2)$. We start with  using  the estimate 
$$
\sup_{a>0}{\frac{a}{y^2+a^2}}\leq \frac{C}{|y|}
$$
which implies that
\begin{eqnarray*}
\frac{y \, |\widehat{g}(x_2,y)|\big|\Delta^+_yf(x_2)-\Delta^+_yf(x_1)\big|\big|\Delta^+_yf(x_2)+\Delta^+_yf(x_1\big|}{\big(y^2+[\Delta^+_yf(x_1)]^2\big)\big(y^2+[\Delta^+_yf(x_2)]^2\big)}&\le& C|x_2-x_1|\|f^\prime\|_{L^\infty}\frac{\omega_g( 2\beta y)}{y^2}.
\end{eqnarray*}
Thus 
\begin{eqnarray*}
\sup_{|x_1-x_2|\leq  r}\mu_{2,2}(x_1,x_2)&\le& C r\|f^\prime\|_{L^\infty}^2 \int_{r}^{\frac1\beta}\frac{\omega_g( 2\beta y)}{y^2}\end{eqnarray*}
which yields in view of  Fubini's theorem
\begin{eqnarray*}
\bigintsss_0^1\sup_{|x_1-x_2|\leq  r}\mu_{2,2}(x_1,x_2)\frac{d r}{ r}&\le& C \|f^\prime\|_{L^\infty}^2\bigintsss_0^1 \bigintsss_{\{\beta r\le \beta y\le 1\}}\frac{\omega_g( 2\beta y)}{y^2}dyd r\\
&\le& C \|f^\prime\|_{L^\infty}^2 \bigintsss_{\{0\le \beta y\le 1\}}\frac{\omega_g( 2\beta y)}{y}dy\\
&\le&C \|f^\prime\|_{L^\infty}^2 \bigintsss_{0}^2\frac{\omega_g(  y)}{y}dy\\
&\le&C \|f^\prime\|_{L^\infty}^2\|g\|_{D}.
\end{eqnarray*}
Remark that the last constant does not depend on $\beta$. Putting together the preceding estimates we find that
\begin{equation}\label{June03}
\|f^\prime T_f^1g\|_{D}\le C\|g\|_{D} \Big[\big(1+|\ln\beta|\big)\gamma(f)+\|f^\prime\|_{L^\infty}^2\Big],
\end{equation}
where $\gamma(f)$ has been defined in \eqref{gammaZ}
As pointed before  the case  $\beta=0$ has a  special structure and one gets
\begin{equation*}
\|f^\prime T_f^1g\|_{D}\le C\|g\|_{D} \Big[ \gamma(f)+\|f^\prime\|_{L^\infty}^2\Big].
\end{equation*}
Now let us move  to the estimate of $f^\prime(x)T_f^{2,2}g$ given by
\begin{eqnarray}\label{Judf}
\nonumber T_f^{2,2}g(x)&=&\bigintsss_{0}^{+\infty}\frac{y \,g(\alpha x-\beta y)\big[f(x-y)-f(x+y)\big]\psi(x,y)}{\Big(y^2+[f(x)+f(x+y)]^2\Big) \Big(y^2+[f(x)+f(x-y)]^2\Big)} dy\\
&=&T_{f,\textnormal{int}}^{r,2,2}g(x)+T_{f,\textnormal{ext}}^{r,2,2}g(x)
\end{eqnarray}
where
\begin{eqnarray*}
\psi(x,y)&=&y\int_0^1\big[f^\prime(x+\theta y)-f^\prime(x-\theta y)\big] d\theta.
\end{eqnarray*}
and the cut-off operators are given by
\begin{eqnarray*}
T_{f,\textnormal{int}}^{r,2,2}g(x)&\triangleq&\bigintsss_{0}^r\frac{y \,g(\alpha x-\beta y)\big[f(x-y)-f(x+y)\big]\psi(x,y)}{\Big(y^2+[\Delta^+_yf(x)]^2\Big) \Big(y^2+[\Delta^+_{-y}f(x)]^2\Big)} dy
\end{eqnarray*}
and
\begin{eqnarray}\label{T03}
\nonumber T_{f,\textnormal{ext}}^{r,2,2}g(x)&=&\bigintsss_{ r}^1\frac{y \,g(\alpha x-\beta y)\big[f(x-y)-f(x+y)\big]\psi(x,y)}{\Big(y^2+[\Delta^+_yf(x)]^2\Big) \Big(y^2+[\Delta^+_{-y}f(x)]^2\Big)} dy\\
&\triangleq&\bigintsss_{ r}^1\frac{\mathcal{N}(x,y)}{\mathcal{D}(x,y)} dy.
\end{eqnarray}
We shall proceed in a similar way to $T_f^1g$.
Let us start  with $f^\prime(x)T_{f,\textnormal{int}}^{r,2,2}g$. Since 
\begin{equation}\label{June1}
|\psi(x,y)|\leq 2 y\omega_{f^\prime}(y)
\end{equation}
then 
one has
\begin{eqnarray*}
\big|f^\prime(x)T_{f,\textnormal{int}}^{r,2,2}g(x)\big|&\le &C\|g\|_{L^\infty}\|f^\prime\|_{L^\infty}|f^\prime(x)|\bigintsss_{ 0}^r\frac{y^3\omega_{f^\prime}(y) }{\Big(y^2+[f(x)]^2\Big)^2 } dy\\
&\le &C\|g\|_{L^\infty}\|f^\prime\|_{L^\infty}|f^\prime(x)\bigintsss_{ 0}^r\frac{\omega_{f^\prime}(y) }{y+f(x) } dy.
\end{eqnarray*}
Thus following the same steps of  \eqref{Eq11} we obtain
\begin{eqnarray*}
\sup_{|{x}_1-{x}_2|\le  r} \Big|f^\prime({x}_1)T_{f,\textnormal{int}}^{ r,2,2}\,g({x}_1)-f^\prime({x}_2)T_{f,\textnormal{int}}^{ r,2,2}\,g({x}_2)\Big|&\leq & C\|g\|_{L^\infty}\|f^\prime\|_{L^\infty}\gamma(f)
\bigintsss_{0}^r\frac{ \omega_{f^\prime}( y)}{y(1+|\ln y|)} dy\\
&+ & C\|g\|_{L^\infty}\|f^\prime\|_{L^\infty}\gamma(f)
\bigintsss_{0}^r| \omega_{f^\prime}( y)dy.
\end{eqnarray*}
Thus by Fubini's theorem and \eqref{Imbed1}  we obtain
\begin{eqnarray*}
\int_0^1\sup_{|{x}_1-{x}_2|\le  r}\Big|f^\prime({x}_1)T_{f,\textnormal{int}}^{ r,2,2}\,g({x}_1)-f^\prime({x}_2)T_{f,\textnormal{int}}^{ r,2,2}\,g({x}_2)\Big|\frac{d r}{ r}
&\le&C\|g\|_{L^\infty}\|f^\prime\|_{D}^2\gamma(f).
\end{eqnarray*}
What is left is  to  estimate the quantity $f^\prime(x)T_{f,\textnormal{ext}}^{r,2,2}g$. First, it is obvious that
 \begin{eqnarray}\label{aout1}
\nonumber f^\prime(x_1)T_{f,\textnormal{ext}}^{ r,2,2}g(x_1)- f^\prime(x_2)T_{f,\textnormal{ext}}^{ r,2,2}g(x_2)&=&\big(f^\prime(x_1)-f^\prime(x_2)\big)T_{f,\textnormal{ext}}^{ r,2,2}(x_2)\\
&+&f^\prime(x_1)\Big(T_{f,\textnormal{ext}}^{ r,2,2}(x_1)-T_{f,\textnormal{ext}}^{ r,2,2}(x_2)\Big).
\end{eqnarray}
The first term of the right-hand side is easy to estimate. Indeed,
\begin{eqnarray*}
|\big(f^\prime(x_1)-f^\prime(x_2)\big)T_{f,\textnormal{ext}}^{ r,2,2}(x_2)|\le \omega_{f^\prime}(|x_1-x_2|) \|T_{f,\textnormal{ext}}^{ r,2,2}\|_{L^\infty}.
\end{eqnarray*}
It is  clear  that 
\begin{eqnarray*}
|T_{f,\textnormal{ext}}^{r,2,2}g(x)|&\le&C\|g\|_{L^\infty}\|f^\prime\|_{L^\infty}\bigintsss_{ r}^1\frac{\omega_{f^\prime}(y)}{y} dy\\
&\leq & C\|g\|_{L^\infty}\|f^\prime\|_{D}^2.
\end{eqnarray*}
Hence 
\begin{eqnarray*}
\int_0^1\sup_{|{x}_1-{x}_2|\le  r}|\big(f^\prime(x_1)-f^\prime(x_2)\big)T_{f,\textnormal{ext}}^{ r,2,2}(x_2)|\frac{d r}{ r}&\leq & C\|g\|_{L^\infty}\|f^\prime\|_{D}^3.
\end{eqnarray*}
To deal with the second term  we proceed as for the term $\mu_{2}(x_1,x_2)$  in  \eqref{T002}. From \eqref{T03} one has
 \begin{eqnarray}\label{aout2}
\nonumber f^\prime(x_1)\Big(T_{f,\textnormal{ext}}^{ r,2,2}(x_1)-T_{f,\textnormal{ext}}^{ r,2,2}(x_2)\Big)&=& f^\prime(x_1)\bigintsss_{ r}^1\frac{\mathcal{N}(x_1,y)-\mathcal{N}(x_2,y)}{\mathcal{D}(x_1,y)} dy\\
&+&f^\prime(x_1)\bigintsss_r^1\frac{\mathcal{N}(x_2,y)\big(\mathcal{D}(x_2,y)-\mathcal{D}(x_1,y)\big)}{\mathcal{D}(x_1,y)\mathcal{D}(x_2,y)} dy.
\end{eqnarray}
It is quite obvious from some straightforward computations using in particular \eqref{June1}  that for $|x_1-x_2|\leq r$
$$
\big|\mathcal{N}(x_1,y)-\mathcal{N}(x_2,y)\big|\leq C\|f^\prime\|_{L^\infty}y^2\Big[\omega_g(\alpha r) \omega_{f^\prime}(y) y+\|g\|_{L^\infty} \omega_{f^\prime}(r)y+\|g\|_{L^\infty} r\, \omega_{f^\prime}(y) \Big].
$$
Since 
$$
\frac{1}{\mathcal{D}(x,y)}\leq\frac{C}{[y+f(x)|^4}\leq  \frac{C}{y^4}
$$
then we get
 \begin{eqnarray*}
\frac{\big|\mathcal{N}(x_1,y)-\mathcal{N}(x_2,y)\big|}{\mathcal{D}(x_1,y)}\leq C\|f^\prime\|_{L^\infty}\Bigg[\omega_g(\alpha r)\frac{ \omega_{f^\prime}(y)}{ y}+\|g\|_{L^\infty} \frac{\omega_{f^\prime}(r)}{y+f(x_1)}+\|g\|_{L^\infty} r\, \frac{\omega_{f^\prime}(y)}{y^2} \Bigg].
\end{eqnarray*}
This leads in view of \eqref{Imbed1},
 \begin{eqnarray}\label{aout3}
\nonumber  |f^\prime(x_1)|\bigintsss_{ r}^1\frac{\big|\mathcal{N}(x_1,y)-\mathcal{N}(x_2,y)\big|}{\mathcal{D}(x_1,y)} dy&\leq& C\|f^\prime\|_{D}\Bigg[ \|f^\prime\|_{D}^2\omega_g(\alpha r)+\|g\|_{D} \omega_{f^\prime}(r) \int_0^{1}\frac{|f^\prime(x_1)|}{y+f(x_1)} dy\Bigg]\\
 &+&\|g\|_{D} \|f^\prime\|_{D}^2\,r\,\int_{r}^1 \frac{\omega_{f^\prime}(y)}{y^2}dy 
 \end{eqnarray}
 which implies according to \eqref{T4}
\begin{eqnarray*}
\bigintsss_0^1\sup_{|x_1-x_2|\leq r}|f^\prime(x_1)|\bigintsss_{r}^1\frac{\big|\mathcal{N}(x_1,y)-\mathcal{N}(x_2,y)\big|}{\mathcal{D}(x_1,y)} dy\frac{dr}{r}&\leq& C \Big(\|f^\prime\|_{D}^3+\|f^\prime\|_{D}^2\gamma(f)\Big)\|g\|_{D}.
 \end{eqnarray*}
Now straightforward computations show that
\begin{equation}\label{Jun8}
\frac{\big|\mathcal{N}(x_2,y)\big(\mathcal{D}(x_2,y)-\mathcal{D}(x_1,y)\big)\big|}{\mathcal{D}(x_1,y)\mathcal{D}(x_2,y)} \leq C\|g\|_{L^\infty}\|f^\prime\|_{L^\infty}^2|x_1-x_2|\frac{\omega_{f^\prime}(y)}{y^2}\cdot
\end{equation}
Therefore using Fubini's theorem we get
\begin{eqnarray*}
\bigintsss_0^1\sup_{|x_1-x_2|\leq r}|f^\prime(x_1)|\bigintsss_{r}^1\frac{\big|\mathcal{N}(x_2,y)\big(\mathcal{D}(x_2,y)-\mathcal{D}(x_1,y)\big)\big|}{\mathcal{D}(x_1,y)\mathcal{D}(x_2,y)} dy\frac{dr}{r}&\leq& C \|f^\prime\|_{D}^4\|g\|_{D}.
 \end{eqnarray*}
Putting together the preceding estimates we find that
\begin{eqnarray}\label{June2}
\nonumber\|f^\prime T_f^{2,2}g\|_{D}&\leq &C\|g\|_{D}\Big(\|f^\prime\|_{D}^2+\|f^\prime\|_{D}^2\gamma(f)+\|f^\prime\|_{D}^3\Big)\\
&\le& C\|g\|_{D}\Big(\|f^\prime\|_{D}^2+\|f^\prime\|_{D}^4\Big)
 \end{eqnarray}
with $C$ a constant depending only on the diameter of the compact $K.$ To get the desired estimate it suffices to  put together \eqref{EqwZZ23}, \eqref{June03} and \eqref{June2}.
\vspace{0,3cm}

${\bf{3)}}$ We shall proceed as in the proof of the  point ${\bf{2)}}$ of the   Theorem \ref{th1}. We use exactly the same splitting with similar estimates and to avoid redundancy we shall only give the basic estimates with some details for the terms that require new treatment. We use the decomposition described in  \eqref{splitjun1}. To estimate $T_f^{2,1}g$ in $C^s$ we use the expression \eqref{xident1}. Then following the same lines using in particular the law product \eqref{lawX3} and the composition law \eqref{comp1}, one has 
\begin{eqnarray*}
\|\mathcal{N}_1(\cdot,y)\|_{s}
&\le&C \|g\|_{s}\Big(\alpha^s +\beta^s \|f^\prime\|_{L^\infty}^s y^s\Big)\|f^\prime\|_{L^\infty}\\
&+&C\|g\|_{L^\infty}\|f^\prime\|_{s}\int_0^1\Big(1+\theta^s \|f^\prime\|_{L^\infty}^sy^s\Big)d\theta.
\end{eqnarray*}
Since  $\alpha,\beta\in[0,1]$ we deduce
\begin{equation*}
\|\mathcal{N}_1(\cdot,y)\|_{s}
\le C \big(\|g\|_{s} \|f^\prime\|_{L^\infty}+\|g\|_{L^\infty} \|f^\prime\|_{s}\big)\Big(1 + \|f^\prime\|_{L^\infty}^s y^s\Big).
\end{equation*}
Similarly we get
 \begin{eqnarray*}
\|\varphi(\cdot,\pm y)\|_{s}&\leq& C\big(1+y\|f^\prime\|_{L^\infty}\big) y\int_0^1\|f^\prime\circ(\hbox{Id}\pm \theta y f )\|_{s} d\theta \\
&\le& C\big(y+y^2\|f^\prime\|_{L^\infty}\big)\|f^\prime\|_{s}\Big(1+\|f^\prime\|_{L^\infty}^s y^s\Big).
 \end{eqnarray*}
This implies
$$
\|\mathcal{D}_1(\cdot,y)\|_{s}\leq C\big(1+y^{4+s}\big)\big(1+\|f^\prime\|_{L^\infty}^{3+s}\big)\|f^\prime\|_s
$$
and
$$
\|1/\mathcal{D}_1(\cdot,y)\|_{s}\leq \frac{C}{1+y^{4-s}}\big(1+\|f^\prime\|_{L^\infty}^{11+s}\big)\|f^\prime\|_s.
$$
Consequently for $s\in (0,1)$
 \begin{eqnarray*}
\|(\mathcal{N}_1/\mathcal{D}_1)(\cdot,y)\|_{s}&\leq& \|(\mathcal{N}_1(\cdot,y)\|_{L^\infty}\|1/\mathcal{D}_1)(\cdot,y)\|_{s}+\|\mathcal{N}_1(\cdot,y)\|_{s}\|1/\mathcal{D}_1(\cdot,y)\|_{L^\infty}\\ 
&\leq&
 \frac{C}{1+y^{4-s}}\big(1+\|f^\prime\|_{L^\infty}^{11+s}\big)\|f^\prime\|_s\|g\|_s.
  \end{eqnarray*}
Therefore we get similarly to \eqref{June4w}
\begin{eqnarray*}
\nonumber\|T_f^{2,1}g\|_s&\leq& C\big(1+\|f^\prime\|_{L^\infty}^{11+s}\big)\|f^\prime\|_s\|g\|_s\int_0^{+\infty} \frac{y^2}{1+y^{4-s}} ds\\
&\leq&C\big(1+\|f^\prime\|_{L^\infty}^{11+s}\big)\|f^\prime\|_s\|g\|_s.
\end{eqnarray*}
Combining law products with Sobolev embeddings and \eqref{embedd11}
we get
\begin{eqnarray*}
\nonumber\| f^\prime T_f^{2,1}g\|_s&\leq&\| f^\prime\|_{L^\infty}\| T_f^{2,1}g\|_s+\| f^\prime\|_s\| T_f^{2,1}g\|_{L^\infty}\\
\nonumber&\leq& C\big(1+\|f^\prime\|_{L^\infty}^{11+s}\big)\|f^\prime\|_s\|f^\prime\|_{L^\infty}\|g\|_s+\|g\|_{L^\infty}\|f^\prime\|_D\|f^\prime\|_s\\
&\le& C\big(1+\|f^\prime\|_{L^\infty}^{11+s}\big)\|f^\prime\|_s\|f^\prime\|_{D}\|g\|_s.
\end{eqnarray*}
Using once again Sobolev embeddings we get
\begin{eqnarray}\label{jul11}
\| f^\prime T_f^{2,1}g\|_s&\le& C\big(\|f^\prime\|_s+\|f^\prime\|_{s}^{13}\big)\|f^\prime\|_{D}\|g\|_s.
\end{eqnarray}

Now to estimate $T_f^1g$ we come back to the decomposition \eqref{jul34} and we easily get 
\begin{eqnarray*}
\|T_{f,\textnormal{int}}^{r,1}g\|_{L^\infty}&\leq& C \|g\|_{s}\int_{0}^r {y^{-1+s}}dy\\
&\leq& \|g\|_{s} r^{s}.
\end{eqnarray*}
Hence we obtain, since $r=|x_1-x_2|$,
\begin{equation*}
|T_{f,\textnormal{int}}^{r,1}g(x_1)-T_{f,\textnormal{int}}^{r,1}g(x_2)|\leq C \|g\|_{s}|x_1-x_2|^s.
\end{equation*}
and we also get
\begin{equation*}
|f^\prime(x_1)T_{f,\textnormal{int}}^{r,1}g(x_1)-f^\prime(x_2)T_{f,\textnormal{int}}^{r,1}g(x_2)|\leq C\|f^\prime\|_{L^\infty} \|g\|_{s}|x_1-x_2|^s.
\end{equation*}
To estimate  the term $f^\prime T_{f,\textnormal{ext}}^{r,1}g$ we come back to \eqref{June7} and \eqref{T002} and following the same estimates one gets
\begin{eqnarray*}
|\mu_1(x_1, x_2)|&\leq &|x_1-x_2|^s \|f^\prime\|_s \|T_{f,\textnormal{ext}}^{ r,1}g\|_{L^\infty}\\
&\le& C|x_1-x_2|^s \|f^\prime\|_s \|g\|_{D}.
\end{eqnarray*}
Moreover
$$
|\mu_2(x_1, x_2)|\leq |\mu_{2,1}(x_1, x_2)|+|\mu_{2,2}(x_1, x_2)|
$$

and 
\begin{eqnarray*}
|\mu_{2,2}(x_1, x_2)|&\leq& C|x_2-x_1|\|f^\prime\|_{L^\infty}^2\|g\|_s\bigintsss_{\{ \beta r \leq \beta y\leq 1\}}(\beta y)^{s} y^{-2} dy\\
&\le&C \|f^\prime\|_{L^\infty}^2\|g\|_s |x_1-x_2|^s.
\end{eqnarray*}
To deal with the term $\mu_{2,1}(x_1,x_2)$ in \eqref{T002}  one obtains in view of \eqref{T4}
\begin{eqnarray*}
|\mu_{2,1}(x_1,x_2)|&\leq& |x_1-x_2|^s\|g\|_s|f^\prime(x_1)|\bigintsss_{\{\beta r\leq \beta y\leq1\}}\frac{y}{y^2+f^2(x_1)}dy\\
&\le& |x_1-x_2|^s\|g\|_s|f^\prime(x_1)|\bigintsss_{0}^{\frac1\beta}\frac{1}{y+f(x_1)}dy.
\end{eqnarray*}
Using the second part of  Lemma \ref{lem1} one finds for $s^\prime\in (0,s]$
\begin{eqnarray*}
|f^\prime(x_1)|\bigintsss_{0}^{\frac1\beta}\frac{1}{y+f(x_1)}dy&\le& C\|f^\prime\|_{s^\prime}^{\frac{1}{1+s^\prime}}|f(x_1)|^{\frac{s^\prime}{1+s^\prime}}\bigintsss_0^{\frac1\beta}\frac{1}{y+f(x_1)}dy.
\end{eqnarray*}
Combining this inequality with
$$
\sup_{a>0}\frac{ a^{\frac{s^\prime}{1+s^\prime}}}{y+a}\leq C y^{-\frac{1}{1+s^\prime}}
$$
we get 
\begin{equation}\label{interp1}
\sup_{x_1\in\R}|f^\prime(x_1)|\bigintsss_{0}^{\frac1\beta}\frac{1}{y+f(x_1)}dy\le C\|f^\prime\|_{s^\prime}^{\frac{1}{1+s^\prime}} \beta^{-\frac{s^\prime}{1+s^\prime}}
\end{equation}
and therefore
\begin{eqnarray*}
|\mu_{2,1}(x_1,x_2)|
&\le& |x_1-x_2|^s\|g\|_s\|f^\prime\|_{s^\prime}^{\frac{1}{1+s^\prime}}\beta^{-\frac{s^\prime}{1+s^\prime}}.
\end{eqnarray*}
Hence
\begin{eqnarray*}
|f^\prime(x_1)T_{f,\textnormal{ext}}^{r,1}g(x_1)-f^\prime(x_2)T_{f,\textnormal{ext}}^{r,1}g(x_2)|&\leq& C\|g\|_{D}\|f^\prime\|_s |x_1-x_2|^s+C \|f^\prime\|_{L^\infty}^2\|g\|_s |x_1-x_2|^s\\
&+&C |x_1-x_2|^s\|g\|_s| \|f^\prime\|_{s^\prime}^{\frac{1}{1+s^\prime}}\beta^{-\frac{s^\prime}{1+s^\prime}}.
\end{eqnarray*}
It follows that
\begin{equation}\label{titt2}
\|f^\prime T_f^1g\|_{s}\leq C \|g\|_s\Big[\|f^\prime\|_{s^\prime}^{\frac{1}{1+s^\prime}}\beta^{-\frac{s^\prime}{1+s^\prime}}+\|f^\prime\|_{L^\infty}^2\Big]+C\|g\|_{D}\|f^\prime\|_s.
\end{equation}
It remains to estimate $f^\prime T_f^{2,2}g$ described  in \eqref{Judf} and \eqref{T03}.  First one may write
\begin{eqnarray*}
|T_{f,\textnormal{int}}^{r,2,2}g(x)|&\leq& C\|g\|_{L^\infty}\|f^\prime\|_{L^\infty}\|f^\prime\|_s\int_0^r y^{s-1}dy\\
&\leq&C\|g\|_{L^\infty}\|f^\prime\|_{L^\infty}\|f^\prime\|_s|x_1-x_2|^s.
\end{eqnarray*}
Therefore
\begin{equation*}
|f^\prime(x_1)T_{f,\textnormal{int}}^{r,2,2}g(x_1)-f^\prime(x_2)T_{f,\textnormal{int}}^{r,2,2}g(x_2)|\leq C \|g\|_{L^\infty}\|f^\prime\|_{L^\infty}^2\|f^\prime\|_s|x_1-x_2|^s.
\end{equation*}
By Sobolev embeddings we get
\begin{equation}\label{Eqjune4}
|f^\prime(x_1)T_{f,\textnormal{int}}^{r,2,2}g(x_1)-f^\prime(x_2)T_{f,\textnormal{int}}^{r,2,2}g(x_2)|\leq C \|g\|_{s}\|f^\prime\|_{L^\infty}\|f^\prime\|_s ^2|x_1-x_2|^s.
\end{equation}
From \eqref{aout1} and the analysis following this identity one has
\begin{eqnarray*}
|\big(f^\prime(x_1)-f^\prime(x_2)\big)T_{f,\textnormal{ext}}^{ r,2,2}(x_2)|&\le& \|f^\prime\|_s  \|T_{f,\textnormal{ext}}^{ r,2,2}g\|_{L^\infty} |x_1-x_2|^s\\
&\leq&C \|g\|_{L^\infty} \|f^\prime\|_s^2 \|f^\prime\|_{L^\infty}|x_1-x_2|^s
\end{eqnarray*}

 Using \eqref{aout2}, \eqref{aout3} and \eqref{interp1} (with $s^\prime=s$) combined with Sobolev embeddings one deduces
\begin{eqnarray*}
 |f^\prime(x_1)|\bigintsss_{ r}^1\frac{\big|\mathcal{N}(x_1,y)-\mathcal{N}(x_2,y)\big|}{\mathcal{D}(x_1,y)} dy&\leq& C\|f^\prime\|_{L^\infty}\|g\|_s\Big( \|f^\prime\|_{s}^2+ \|f^\prime\|_{s}\Big).\end{eqnarray*}
From \eqref{Jun8} we get
\begin{eqnarray*}
\frac{\big|\mathcal{N}(x_2,y)\big(\mathcal{D}(x_2,y)-\mathcal{D}(x_1,y)\big)\big|}{\mathcal{D}(x_1,y)\mathcal{D}(x_2,y)} \leq C\|g\|_{L^\infty}\|f^\prime\|_{L^\infty}\|f^\prime\|_s|x_1-x_2| y^{s-2}.
\end{eqnarray*}
Therefore  we get
\begin{eqnarray*}
|f^\prime(x_1)|\bigintsss_{ r}^1\frac{\big|\mathcal{N}(x_2,y)\big(\mathcal{D}(x_2,y)-\mathcal{D}(x_1,y)\big)\big|}{\mathcal{D}(x_1,y)\mathcal{D}(x_2,y)} dy&\leq&  C\|g\|_{L^\infty}\|f^\prime\|_{L^\infty}^2\|f^\prime\|_s|x_1-x_2|^s.
 \end{eqnarray*}
Hence plugging the preceding estimates into   \eqref{aout1} and \eqref{aout2}, we find
\begin{eqnarray*}
 \Big|f^\prime(x_1)T_{f,\textnormal{ext}}^{ r,2,2}(x_1)-|f^\prime(x_2)T_{f,\textnormal{ext}}^{ r,2,2}(x_2)\Big|&\leq &C \|g\|_{L^\infty} \|f^\prime\|_s^2 \|f^\prime\|_{L^\infty}|x_1-x_2|^s\\
&+&C\|f^\prime\|_{L^\infty}\|g\|_s\Big( \|f^\prime\|_{s}^2+ \|f^\prime\|_{s}\Big) |x_1-x_2|^s\\
&+&C\|g\|_{L^\infty}\|f^\prime\|_{L^\infty}^2\|f^\prime\|_s|x_1-x_2|^s.
\end{eqnarray*}
Using Standard  embeddings we get
\begin{equation}\label{Eqjune1}
 \Big|f^\prime(x_1)T_{f,\textnormal{ext}}^{ r,2,2}(x_1)-|f^\prime(x_2)T_{f,\textnormal{ext}}^{ r,2,2}(x_2)\Big|\leq C \|g\|_{s} \|f^\prime\|_{L^\infty}|x_1-x_2|^s\Big[\|f^\prime\|_s+\|f^\prime\|_s^2\Big].
\end{equation}
Putting together \eqref{Eqjune4}, \eqref{Eqjune1} and  \eqref{Judf} we obtain
\begin{equation}\label{Eqjune5}
\|f^\prime T_f^{2,2}g\|_s\leq C \|g\|_{s} \|f^\prime\|_{L^\infty}\Big[\|f^\prime\|_s+\|f^\prime\|_s^2\Big].
\end{equation}
Combining \eqref{jul11}, \eqref{titt2} and \eqref{Eqjune5} we get for any $s^\prime\in (0,s]$
\begin{eqnarray*}
\|f^\prime T_fg\|_s&\leq &C \|g\|_{s} \|f^\prime\|_{D}\Big[\|f^\prime\|_s+\|f^\prime\|_s^{13}\Big]\\
&+&C\|g\|_s\|f^\prime\|_{s^\prime}^{\frac{1}{1+s^\prime}}\beta^{-\frac{s^\prime}{1+s^\prime}}+C\|g\|_{D}\|f^\prime\|_{s}.
\end{eqnarray*}
Now using the embedding $ C^s\hookrightarrow C^{s^\prime}\hookrightarrow  D$  we get
\begin{eqnarray*}
\|f^\prime T_fg\|_s&\leq &C \|g\|_{s}\Big[\beta^{-\frac{s}{1+s}} \|f^\prime\|_{s}^{\frac{1}{1+s}}+\|f^\prime\|_s^{14}\Big]\\
&\leq& C \|g\|_{s}\Big[\beta^{-\frac12} \|f^\prime\|_{s}^{\frac{1}{1+s}}+\|f^\prime\|_s^{14}\Big].\end{eqnarray*}
Another useful estimate that one can get from taking $s^\prime=s/2$ and using  some  interpolation  inequalities
$$
\|f^\prime\|_D\leq C\|f^\prime\|_{s\frac{1+s}{2+s}}\leq C\|f^\prime\|_{L^\infty}^{\frac{1}{2+s}}\|f^\prime\|_s^{\frac{1+s}{2+s}},\quad \| f^\prime\|_{\frac{s}{2}}\leq C\| f^\prime\|_{L^\infty}^{\frac12}\| f^\prime\|_s^{\frac12},\quad \beta^{-\frac{s}{2+s}}\leq \beta^{-\frac12}
$$
is the following
\begin{eqnarray*}
\|f^\prime T_fg\|_s&\leq &C \|g\|_{s} \|f^\prime\|_{L^\infty}^{\frac{1}{2+s}}\Big[\|f^\prime\|_s^{\frac{1}{2+s}}\beta^{-\frac12}+\|f^\prime\|_s^{14}\Big]\\
&+&C\|g\|_{L^\infty}^{\frac{1}{2+s}}\|g\|_s^{\frac{1+s}{2+s}}\|f^\prime\|_{s}.
\end{eqnarray*}
This achieves the proof of Theorem \ref{th1}.
\end{proof}

   \section{Local well-posedness proof}
   
   The main objective of this section is to prove the local well-posedness result stated in  the first part  of Theorem \ref{thm1}. The approach that we shall follow is classical and  will be done in  several steps. We start  with a priori estimates of smooth solutions  in suitable Banach spaces and this will be the main concern of the Sections \ref{Secdez1} and \ref{Secdez2}. The rigorous  construction of classical solutions will be conducted in Section \ref{Const778}.

 \subsection{Estimates of the source terms}\label{Secdez1}
 The main goal of this section is to establish the following a priori estimates for the source terms $F$ and $G$ described in \eqref{TataX1} and \eqref{TataX2}.
%
 \begin{proposition}\label{prop10}
 Let $K$ be a compact set of $\R$ and $s\in (0,1)$. We denote by $X$ one of the space $C_K^\star$ and $C^s_K.$ There exits a constant $C>0$ depending only on $K$ such that the  following estimates hold true
 \begin{enumerate}
 \item For any $f\in X$ we have
 $$
\|F\|_{L^\infty}\leq C\|f^\prime\|_{L^\infty}\|f^\prime\|_D,\quad \|F\|_X\leq C\|f^\prime\|_{D}\big(\|f^\prime\|_X+\|f^\prime\|_X^3\big) .$$
 \item For any $f\in X$ we have
 $$
\|G\|_{L^\infty}\leq C\|f^\prime\|_{L^\infty}\Big(1+\|f^\prime\|_{D}^3\Big),\quad  \|G\|_{X}\le C\big(1+\|f^\prime\|_{D}^{\frac{1}{3}}\big)\Big(\|f^\prime\|_X+\|f^\prime\|_X^{16}\Big).
 $$
 \end{enumerate}
 \end{proposition}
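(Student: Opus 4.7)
The estimate for $F$ admits a clean reduction to the bilinear Cauchy operator theory of Proposition~\ref{propCart1}. The key is the Taylor identity
$$
\Delta_y^- f(x) - y f'(x) = y\int_0^1 \Delta_{\theta y} f'(x)\, d\theta,
$$
which, inserted into \eqref{TataX1}, recasts $F$ as the $\theta$-average of the real part operator \eqref{BiCauchop} evaluated at $g=h=f'$:
$$
F(x) = \int_0^1 \mathcal{C}_f^{\theta,\Re}(f',f')(x)\, d\theta.
$$
Applying Proposition~\ref{propCart1} uniformly in $\theta$ and then absorbing $\|f'\|_{L^\infty}$ via the embedding $C^\star_K,C^s_K\hookrightarrow L^\infty$ gives $\|F\|_X\le C\|f'\|_D\|f'\|_X(1+\|f'\|_X^2)$, which is the claimed bound. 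The $L^\infty$ bound is immediate from the pointwise estimate $y^2+(\Delta_y^- f)^2\ge y^2$ together with $|\Delta_{\theta y}f'|,|\Delta_y f'|\le \omega_{f'}(|y|)$, yielding $|F(x)|\le 2\|f'\|_{L^\infty}\int_{-M}^{M}\omega_{f'}(|y|)/|y|\,dy$.

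The estimate for $G$ is harder because the ``$+$'' structure gives no cancellation at $y=0$, and the direct integrand is not in the form covered by Theorem~\ref{th1}. I plan to reshape it by an integration by parts in $y$: setting $A(x,y):=\Delta_y^+ f(x)$ so that $\partial_y A = f'(x+y)$, use
$$
\Delta_y^+ f(x)\cdot \Delta_y^+ f'(x) = A\,\partial_y A + A\,f'(x) = \tfrac12\partial_y (A^2) + A f'(x),
$$
together with $\frac{\frac12\partial_y A^2}{y^2+A^2} = \partial_y\bigl[\tfrac12\log(y^2+A^2)\bigr] - \frac{y}{y^2+A^2}$. Because $f$ vanishes on $\partial [-M,M]$ (the window was chosen to contain $K_0-K_0$), the log boundary terms cancel, and the rest of $G$'s numerator $yf'(x)\Delta_y^+ f'(x)$ combines with the leftover to produce the decomposition
$$
G(x)=f'(x)\,T^{1,1}_f f'(x) + f'(x)\,J_2(x) + \bigl[(f'(x))^2-1\bigr] J_1(x),
$$
with $J_1(x)=\int y(y^2+A^2)^{-1}dy$ and $J_2(x)=\int A(y^2+A^2)^{-1}dy$. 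The first term is exactly the object handled by Theorem~\ref{th1}(2)-(3), which is the source of the power $\|f'\|_X^{14}$ that, after combining with the prefactor $f'(x)$ and Sobolev embeddings, yields the $\|f'\|_X^{16}$ in the statement; the refined estimate \eqref{hmi24} with the optimal interpolation exponent $s^\prime$ is what produces the unusual factor $(1+\|f'\|_D^{1/3})$. The remaining scalar quantities $J_1$ and $f' J_2$ are not of the form $f' T_f^{\alpha,\beta} g$ with an admissible $g\in C^\star_K$, but $J_1$ is symmetrized via $y\mapsto -y$ into precisely the $T_f^{2,1}+T_f^{2,2}$ structure of \eqref{lila90} and thereby controlled by repeating the proof of Theorem~\ref{th1}, while $f'(x)J_2(x)$ is handled by the pointwise inequality of Lemma~\ref{lem1} applied to $|f'(x)|/(y+f(x))$.

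For the $L^\infty$ bound on $G$, I would proceed by direct pointwise estimate: bound the numerator by
$$
|\Delta_y^+ f(x)+yf'(x)|\le 2f(x)+2|y||f'(x)|+|y|\omega_{f'}(|y|),\qquad |\Delta_y^+ f'(x)|\le 2\|f'\|_{L^\infty},
$$
use the denominator bound $y^2+(\Delta_y^+ f)^2\ge y^2+f(x)^2$, split into regions $|y|\lesssim f(x)$ and $|y|\gtrsim f(x)$, and convert the arising $|f'(x)|\int dy/(y+f(x))$ into $\|f'\|_D(1+\|f'\|_D^2)$ via Lemma~\ref{lem1} exactly as in \eqref{T4}. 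The main obstacle of the whole proposition is clearly the $G$-estimate: one must simultaneously carry out the integration by parts and verify the boundary cancellation, identify the right $T_f^{\alpha,\beta}$-operator applications for the $f'\cdot T_f^{1,1}f'$ term, treat the ``orphan'' pieces $J_1$ and $J_2$ that do not literally fit the framework of Theorem~\ref{th1}, and finally track interpolation exponents precisely enough to obtain the sharp powers $\|f'\|_X^{16}$ and $\|f'\|_D^{1/3}$ stated in the proposition.
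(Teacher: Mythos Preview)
Your treatment of $F$ is exactly the paper's: the Taylor identity turns $F$ into $\int_0^1\mathcal{C}_f^{\theta,\Re}(f',f')\,d\theta$ and Proposition~\ref{propCart1} finishes.

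For $G$ your integration-by-parts decomposition
$$
G=[(f')^2-1]J_1+f'J_2+f'\,T_f^{1,1}f'
$$
is algebraically correct and genuinely different from the paper's route. The paper instead writes $\Delta_y^+f=2f(x)+\Delta_y^-f$ to split $G=G_1+G_2$ (see \eqref{daca1}), where $G_1=2f(x)\int\frac{f'(x)+f'(x+y)}{y^2+(\Delta_y^+f)^2}dy$ is desingularized \emph{directly} by the change of variables $y=f(x)z$ (see \eqref{Expbb}), and $G_2$ is reduced to the operators $f'T_f^{1,\theta}f'$, $f'T_f^{1,1}f'$, $(f')^2T_f^{0,1}\chi$ plus a remainder $G_{2,2}^1$ treated by an int/ext split. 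Your term $f'\,T_f^{1,1}f'$ matches the paper's $G_{2,1}$-type pieces and is handled by Theorem~\ref{th1} exactly as you say; the $\|f'\|_D^{1/3}$ does come from \eqref{hmi24}.

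There is, however, a real gap in your plan for the orphan terms. You propose to control $\|J_1\|_X$ by ``repeating the proof of Theorem~\ref{th1}'' via the split \eqref{lila90}. But in that proof the $T_f^{2,2}$ contribution is only estimated \emph{with the prefactor} $f'(x)$ (see \eqref{June2}): the crucial step is the bound $|f'(x)|\int_0^L\frac{dy}{y+f(x)}\le C\gamma(f)$ of \eqref{T4}, which fails without $f'(x)$. Since your $-J_1$ carries no such prefactor, the int/ext argument for $T_f^{2,2}(1)$ breaks down in the Dini case (it leads to $\int_0^1\frac{\omega_{f'}(y)|\ln y|}{y}dy$, which can diverge for $f'\in C^\star_K$). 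The fix is not to repeat Theorem~\ref{th1} but rather to use the change of variables $y=f(x)z$ on $J_1$ itself --- which is precisely the paper's $G_{11}$ mechanism. Similarly, ``Lemma~\ref{lem1}'' gives only the $L^\infty$ bound for $f'J_2$; to get $\|f'J_2\|_X$ you must further decompose $J_2=2f(x)\int\frac{dy}{y^2+A^2}+\int_0^1T_f^{1,\theta}f'\,d\theta$, after which the first piece again needs the $y=f(x)z$ change of variables and the second falls under Theorem~\ref{th1}. In other words, once you fill the gaps your argument collapses back onto the paper's: the key analytic device you are missing is the desingularizing substitution $y=f(x)z$, not Lemma~\ref{lem1} or a replay of Theorem~\ref{th1}.
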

 \begin{proof}
 For simplicity we denote along this proof   the operator $\Delta_y^-$  by $\Delta_y.$
 
 ${\bf{(1)}}$
 The estimate of  $F$ in $L^\infty$ is quite easy. Indeed,  it is obvious according to \eqref{L1} that
  \begin{eqnarray*}
\|F\|_{L^\infty}&\le& C\|f^\prime\|_{L^\infty} \bigintsss_{-M}^{M}\sup_{x\in\R}\frac{|f^\prime(x+y)-f^\prime(x)| }{|y|}dy
\\
&\leq& C\|f^\prime\|_{L^\infty}\bigintsss_{-M}^{M}\frac{\omega_{f^\prime}(|y|) }{|y|}dy\\
&\le& C\|f^\prime\|_{L^\infty} \|f^\prime\|_{D}.
 \end{eqnarray*}
Now let us move to the estimate of $F$ in the function space $X$ which is   Dini space $C^\star_K$ or H\"{o}lder spaces $C^s_K$. For this purpose we shall transform  slightly $F$ in order to apply Proposition \ref{propCart1}. In fact from Taylor formula one can write
$$
F(x)=\bigintsss_{-M}^{M}\bigintsss_0^1\frac{y\,\Delta_{\theta y}f^\prime(x)\Delta_yf^\prime(x) }{y^2+(\Delta_yf(x))^2} dy d\theta.
$$
From the notation \eqref{BiCauchop} one has
$$
F(x)=\int_0^1\mathcal{C}_f^{\theta,\Re}(f^\prime, f^\prime)(x) d\theta.
$$
At this stage it suffices to apply  Proposition \ref{propCart1} which implies that

      $$
      \|F\|_{X}\le C\big(\|f^\prime\|_{D}\|f^\prime\|_X+\|f^\prime\|_{L^\infty}\|f^\prime\|_D\|f^\prime\|_X^2\big)
      $$
      which in turn gives the desired result according to  the  embedding $X\hookrightarrow L^\infty$.\\
      
            ${\bf{(2)}}$ The expression of $G$ is given in \eqref{TataX2} and for simplicity we shall assume along this part that $M=1. $ We shall  first   split $G$ as follows
      
      \begin{eqnarray}\label{daca1}
 \nonumber G(x)&=&\textnormal{p.v.}\bigintsss_{-1}^{1}\frac{\big[2f(x)+\Delta_y^-f(x)+y f^\prime(x)\big]\big(f^\prime(x+y)+ f^\prime(x)\big) }{y^2+(f(x+y)+f(x))^2} dy\\
\nonumber &=&2f(x)\textnormal{ p.v.}\bigintsss_{-1}^{1}\frac{f^\prime(x+y)+ f^\prime(x) }{y^2+(f(x+y)+f(x))^2} dy\\
 \nonumber&+&\textnormal{p.v.}\bigintsss_{-1}^{1}\frac{\big[\Delta_y^-f(x)+y f^\prime(x)\big]\big(2f^\prime(x)+\Delta_y^- f^\prime(x)\big) }{y^2+(f(x+y)+f(x))^2} dy\\
 &\triangleq&G_1(x)+G_2(x).
     \end{eqnarray}  
    The estimate  $G_1$ in $L^\infty$ is quite  easy. To see this we can first assume that $f(x)>0$, otherwise the integral is vanishing. Thus by change of variables we get
       \begin{eqnarray*}
       |G_1(x)|&\le& 4\|f^\prime\|_{L^\infty}\int_{-1}^{1}\frac{|f(x)|}{y^2+f^2(x)} dy\\
       &\leq &C\|f^\prime\|_{L^\infty}.
          \end{eqnarray*} 
     Note that for  $x\in \textnormal{supp} f$ we have $f(x+y)=0, \forall y\notin [-1,1].$  Thus 
          \begin{eqnarray}\label{daca2}
\nonumber G_1(x)&=&2f(x)\bigintsss_{\R}\frac{f^\prime(x+y)+ f^\prime(x) }{y^2+(f(x+y)+f(x))^2} dy-4f(x)f^\prime(x)  \bigintsss_{1}^{+\infty}\frac{1}{y^2+(f(x))^2} dy\\
\nonumber &=&2f(x)\bigintsss_{\R}\frac{f^\prime(x+y)+ f^\prime(x) }{y^2+(f(x+y)+f(x))^2} dy-4f^\prime(x)\arctan(f(x))\\
 &\triangleq&G_{11}+G_{12}.
     \end{eqnarray}  
The estimate of $G_{12}$ in $L^\infty$ is elementray
\begin{equation}\label{kar1}
\|G_{12}\|_{L^\infty}\leq 4\|f^\prime\|_{L^\infty}\|f\|_{L^\infty}.
\end{equation}
However, to estimate   $G_{12}$ in $X$  we use the law product \eqref{L1} leading to
            \begin{eqnarray*}
 \|f^\prime\arctan f\|_{X}\le \|\arctan f\|_{L^\infty} \|f^\prime\|_{X} + \|f^\prime\|_{L^\infty} \|\arctan f\|_{X}.
     \end{eqnarray*} 
     It is easy to   check from the mean value theorem that
     $$
    \|\arctan f\|_{L^\infty} \leq \|f\|_{L^\infty}\quad \textnormal{and}\quad  \omega_{\arctan f}(r)\le \omega_f(r)
     $$
     which implies  in view of the embedding $\textnormal{Lip}\hookrightarrow X$ that
     \begin{eqnarray*}
 \|\arctan f\|_{X}&\le& \|f\|_{X}\\
& \leq&C\|f^\prime\|_{L^\infty}.
     \end{eqnarray*} 
     Therefore we obtain from the classical  embeddings       \begin{eqnarray}\label{dgh1}
\nonumber \|G_{12}\|_{X}&\le& C \big(\|f\|_{L^\infty}\|f^\prime\|_{X}+C\|f^\prime\|_{L^\infty}^2 \big)\\
 &\le&  C \|f^\prime\|_{L^\infty}\|f^\prime\|_{X}.
     \end{eqnarray} 
          We shall now estimate the term $G_{11}$ in the spaces $X$. First we use Taylor formula
          $$
         f(x+y)+ f(x)=2f(x)+y\int_0^1f^\prime(x+\theta y)d\theta
          $$
          which implies after a change of variables $y=f(x)z$ (assuming that $f(x)>0$)
                 \begin{eqnarray}\label{Expbb}
         \nonumber G_{11}(x)&=& 2f(x)\bigintsss_{\R}\frac{f^\prime(x)+f^\prime(x+y) }{y^2+\big[2f(x)+y\int_0^1f^\prime(x+\theta y)d\theta\big]^2} dy\\
         &=&2\bigintsss_{\R}\frac{f^\prime(x)+f^\prime(x+f(x)z) }{   \varphi(x,z)} dz  
           \end{eqnarray}  
with 
       $$
       \varphi(x,z)=z^2+\Bigg(2+z \int_0^1f^\prime\big(x+\theta f(x) z\big)d\theta\Bigg)^2.
       $$
       Note that for $f(x)=0$ we have from the definition $G_{11}(x)=0$ which agrees with the  expression \eqref{Expbb} because $f^\prime(x)=0$.    The estimate in  $L^\infty$ is easy to get in view of \eqref{Eqw1}
       \begin{eqnarray*}
         \|G_{11}\|_{L^\infty}&\le& 4\|f^\prime\|_{L^\infty}\int_{\R}  \|1/\varphi(\cdot,z)\|_{L^\infty} dz  \\
         &\le& C\Big(\|f^\prime\|_{L^\infty}+\|f^\prime\|_{L^\infty}^3\Big).
           \end{eqnarray*}  
               From the law products \eqref{law2}  and \eqref{lawX3} we deduce that
           \begin{eqnarray*}
         \|G_{11}\|_{X}&=& 2\int_{\R}\|f^\prime+f^\prime\circ(\hbox{Id}+z f)\|_{X }   \|1/\varphi(\cdot,z)\|_{L^\infty} dz  \\
         &+&2\int_{\R}\|f^\prime+f^\prime\circ(\hbox{Id}+z f)\|_{L^\infty }   \|1/\varphi(\cdot,z)\|_{X} dz\\
         &\triangleq& \ell_1+\ell_2.
           \end{eqnarray*}  
          According to the law products \eqref{comp1} and \eqref{comp3} one may write
     
     \begin{equation*}
     \|f^\prime+f^\prime\circ(\hbox{Id}+z f)\|_{X }\le \|f^\prime\|_{X}\Big(1+\mu\big(1+|z|\|f^\prime\|_{L^\infty}\big)\Big)
     \end{equation*}
     with
     \begin{equation*}
\mu(r)\triangleq\left\lbrace
\begin{array}{l}
\ln r, \quad \textnormal{if}\quad X=C^\star_K\\
r^s,\quad \textnormal{if}\quad X=C^s.
\end{array}
\right.
\end{equation*}
Observe that we can unify  both cases through the estimate
\begin{eqnarray}\label{Dqs1}
   \nonumber   \|f^\prime+f^\prime\circ(\hbox{Id}+z f)\|_{X }&\le& C\|f^\prime\|_{X}\Big(1+\big(1+|z|\|f^\prime\|_{L^\infty}\big)^s\Big)\\
     &\le& C\|f^\prime\|_{X}\Big(1+|z|^s\|f^\prime\|_{L^\infty}^s\Big).
     \end{eqnarray}
Putting together \eqref{Dqs1} and \eqref{Eqw1} we find  for any $s\in(0,1)$

\begin{eqnarray}\label{pgh1}
   \nonumber \ell_1&\le& C\|f^\prime\|_{X}\big(1+\|f^\prime\|_{L^\infty}^2\big)\int_{\R}\frac{1+|z|^s\|f^\prime\|_{L^\infty}^s}{1+z^2}dz\\
    &\le& C\|f^\prime\|_{X}\big(1+\|f^\prime\|_{L^\infty}^3\big).
     \end{eqnarray}
  To estimate $\ell_2$ we use the elementary estimate
  $$
  \|f^\prime+f^\prime\circ(\hbox{Id}+z f)\|_{L^\infty }\le 2\|f^\prime\|_{L^\infty}.
  $$
    
Notice  from  the definition of the spaces $X$  and \eqref{Eqw1} that one can deduce 
          \begin{eqnarray}\label{Invertt1}
        \nonumber  \|1/\varphi(\cdot,z)\|_{X}&\le&\|{1}/{\varphi(\cdot,z)}\|_{L^\infty}^2\|\varphi(\cdot,z)\|_{X}\\
          &\le& C\frac{1+\|f^\prime\|_{L^\infty}^4}{1+z^4} \|\varphi(\cdot,z)\|_{X}.
           \end{eqnarray} 
      Moreover by the law products
         \begin{eqnarray*}           
         \|\varphi(\cdot,z\|_{X}\le 2|z|\Big(2+|z|\|f^\prime\|_{L^\infty}\Big) \int_0^1\|f^\prime\circ\big(\hbox{Id}+\theta z f \big)\|_{X}d\theta,
         \end{eqnarray*} 
         which implies according to \eqref{Dqs1}
          \begin{eqnarray*}           
      \|\varphi(\cdot,z\|_{X}&\le& C|z|\Big(2+|z|\|f^\prime\|_{L^\infty}\Big)\|f^\prime\|_X \big(1+|z|^s\|f^\prime\|_{L^\infty}^s\big) \\
         &\leq& C \big(1+|z|^{2+s}\big)\big(1+\|f^\prime\|_{L^\infty}^{1+s}\big)  \|f^\prime\|_X.
              \end{eqnarray*} 
Putting together  this estimate with \eqref{Invertt1}      we find
        \begin{equation} \label{dezz1}          
      \|1/\varphi(\cdot,z\|_{X}\le C \frac{\big(1+\|f^\prime\|_{L^\infty}^{5+s}\big)  \|f^\prime\|_X}{{1+|z|^{2-s}}}.
              \end{equation} 
Therefore we deduce that
  \begin{eqnarray*}
\ell_2&\le &C\|f^\prime\|_{L^\infty}\big(1+\|f^\prime\|_{L^\infty}^{5+s}\big)\|f^\prime\|_{X}\\
&\le&C\big(1+\|f^\prime\|_{L^\infty}^{7}\big)\|f^\prime\|_{X}.
     \end{eqnarray*} 
     Combining this estimate with \eqref{pgh1} we obtain
     \begin{equation*}
     \|G_{11}\|_X\leq C\big(1+\|f^\prime\|_{L^\infty}^{7}\big)\|f^\prime\|_{X}.
     \end{equation*}
     It follows from  this latter estimate, \eqref{dgh1} and  \eqref{daca2} that
     \begin{equation}\label{pgh3}
     \|G_{1}\|_X\leq C\big(1+\|f^\prime\|_{L^\infty}^{7}\big)\|f^\prime\|_{X}.
     \end{equation}

   What is left is to estimate $G_2$. For this purpose  we write according to Taylor formula 
   
      \begin{eqnarray*}
 G_2(x)&=&
\textnormal{p.v.}\bigintsss_{\R}\frac{y f^\prime(x)\Big[f^\prime(x)\chi(y)+2\bigintsss_0^1f^\prime(x+\theta y)d\theta+f^\prime(x+y)\Big] }{y^2+(f(x)+f(x+y))^2} dy\\
&+&\textnormal{p.v.}\bigintsss_{\R}\frac{\Delta_yf(x) \Delta_y f^\prime(x) }{y^2+(f(x)+f(x+y))^2} dy+2f(x)f^\prime(x)\bigintsss_{1}^{+\infty}\frac{dy}{y^2+f^2(x)}\\
 &\triangleq&G_{2,1}(x)+G_{2,2}(x)+2 f^\prime(x)\arctan(f(x)).
     \end{eqnarray*}  
   where $\chi:\R\to\R$ is an even continuous compactly supported function belonging to $X$ ad taking the value $1$ on the neighborhood of $[-1,M1$. Note that we have used in the first line the identity: for any $x\in K$
   \begin{equation*}
   \textnormal{p.v.}\bigintsss_{-1}^{1}\frac{y}{y^2+[f(x+y)+f(x)]^2}dy=  \textnormal{p.v.}\bigintsss_{\R}\frac{y\chi(y)}{y^2+[f(x+y)+f(x)]^2}dy  
   \end{equation*}
   which follows from the fact that $f(x+y)=0, \forall y\notin [-1,1]$. 
Therefore we may write
   $$
   G_{2,1}(x)= (f^\prime(x))^2(T_f^{0,1}\chi)(x)+2\int_0^1 f^\prime(x) (T_f^{1,\theta} f^\prime)(x) d\theta+f^\prime(x)(T_f^{1,1}f^\prime)(x)
   $$
   where we use the notation $T_f^{\alpha,\beta}$ from Theorem \ref{th1}. 
   The estimate of $G_{2,1}$  in $L^\infty$ is  quite easy and follows from Theorem \ref{th1},
   \begin{equation*}
  \|G_{2,1}\|_{L^\infty}\leq   C\|f^\prime\|_{L^\infty}\|f^\prime\|_D\Big(1+\|f^\prime\|_D^2\Big).
   \end{equation*}
   However to  estimate of $G_{2,2}$ in $L^\infty$ it is more convenient to write it in the form
   $$
   G_{2,2}(x)=\textnormal{p.v.}\bigintsss_{-1}^1\frac{\Delta_yf(x) \Delta_y f^\prime(x) }{y^2+(f(x)+f(x+y))^2} dy+2 f^\prime(x)\arctan(f(x)).
   $$ 
 Thus using  the mean value theorem we find 
   \begin{equation*}
  \|G_{2,2}\|_{L^\infty}\leq   C\|f^\prime\|_{L^\infty}\|f^\prime\|_D.
   \end{equation*}
   Combining these estimates with \eqref{Imbed1} we obtain
   \begin{equation}\label{kar8}
  \|G_{2}\|_{L^\infty}\leq   C\|f^\prime\|_{L^\infty}\Big(\|f^\prime\|_D+\|f^\prime\|_D^3\Big).
   \end{equation} 
     We shall now implement  the estimates in $X$ and start with the term $G_{2,1}.$ According to   Theorem \ref{th1} we can unify the estimates in $C^\star_K$ and $C^s$ and get the following weak estimate
   \begin{equation}\label{tiit1}
  \|f^\prime T_f^{\alpha,\beta} g\|_X\leq   C\|g\|_X\Big(\|f^\prime\|_X^{\frac12}\beta^{-\frac12}+\|f^\prime\|_X^{15}\Big).
   \end{equation}
   From the law products \eqref{law2} and  \eqref{lawX3}  one has
   \begin{equation*}
   \|(f^\prime)^2(T_f^{0,1}\chi)\|_X\leq \|f^\prime\|_{L^\infty} \|f^\prime T_f^{0,1}\chi\|_X+\|f^\prime\|_{X}\|f^\prime\|_{L^\infty} \|T_f^{0,1}\chi\|_{L^\infty}.
   \end{equation*}
   
   Hence we find
\begin{eqnarray*}
   \|(f^\prime)^2(T_f^{0,1}\chi)\|_X&\leq& C \|f^\prime\|_{L^\infty}\Big(\|f^\prime\|_X^{\frac12} +\|f^\prime\|_X^{15}\Big)+\|f^\prime\|_{X}\|f^\prime\|_{L^\infty} \big(1+\|f^\prime\|_X^2\big)\\
   &\le& C \|f^\prime\|_{L^\infty}\Big(\|f^\prime\|_X^{\frac12} +\|f^\prime\|_X^{15}\Big).
\end{eqnarray*}

   Using \eqref{tiit1} we get successively 
  \begin{equation}\label{tataqq1}
     \|f^\prime T_f^{0,\theta} f^\prime\|_X\leq   C\|f^\prime\|_X\Big(\|f^\prime\|_X^{\frac12}\theta^{-\frac12}+\|f^\prime\|_X^{15}\Big).
\end{equation}
and
\begin{equation*}
     \|f^\prime T_f^{1,1} f^\prime\|_X\leq   C\|f^\prime\|_X\Big(\|f^\prime\|_X^{\frac12}+\|f^\prime\|_X^{15}\Big).
\end{equation*}
  Thus using the foregoing inequalities we deduce that

 \begin{eqnarray}\label{Tata11}
  \nonumber \|G_{2,1}\|_{X}&\leq& C \|f^\prime\|_{L^\infty}\Big(\|f^\prime\|_X^{\frac12} +\|f^\prime\|_X^{15}\Big)+C\|f^\prime\|_X\Big(\|f^\prime\|_X^{\frac12}+\|f^\prime\|_X^{15}\Big)\\
   &\le& C\Big(\|f^\prime\|_X^{\frac32}+\|f^\prime\|_X^{16}\Big).
   \end{eqnarray}
   When $X=C^s$ we can give a refined  estimate for \eqref{tataqq1} using \eqref{hmi24}
   \begin{eqnarray*}
     \|f^\prime T_f^{0,\theta} f^\prime\|_s &\leq&   C\|f^\prime\|_{L^\infty}^{\frac{1}{2+s}}\Big(\|f^\prime\|_s^{\frac{3+2s}{2+s}}\theta^{-\frac12}+\|f^\prime\|_s^{15}\Big)
\end{eqnarray*}
which implies that
\begin{eqnarray}\label{Tata12}
  \nonumber \|G_{2,1}\|_{s}&\leq& C \|f^\prime\|_{L^\infty}\Big(\|f^\prime\|_s^{\frac12} +\|f^\prime\|_s^{15}\Big)+C\|f^\prime\|_{L^\infty}^{\frac{1}{2+s}}\Big(\|f^\prime\|_s^{\frac{3+2s}{2+s}}+\|f^\prime\|_s^{15}\Big)
\\
   &\le&C\|f^\prime\|_{L^\infty}^{\frac{1}{3}}\Big(\|f^\prime\|_s+\|f^\prime\|_s^{16}\Big).
   \end{eqnarray}
   Hence one can unify \eqref{Tata11} and \eqref{Tata12} 
   \begin{equation}\label{Tata13}
\|G_{2,1}\|_{X}\leq C\|f^\prime\|_{D}^{\frac{1}{3}}\Big(\|f^\prime\|_X+\|f^\prime\|_X^{16}\Big).
   \end{equation}
As to the term $G_{2,2}$ we may  write
\begin{eqnarray*}
 G_{2,2}(x)&=&2f^\prime(x)\arctan(f(x))+\bigintsss_{-M}^{M}\frac{\big[\Delta_yf(x)-yf^\prime(x)\big] \Delta_y f^\prime(x)}{y^2+(f(x+y)+f(x))^2} dy\\
 &+&\textnormal{p.v.}\bigintsss_{\R}\frac{yf^\prime(x) \Delta_y f^\prime(x) }{y^2+(f(x+y)+f(x))^2} dy\\
 &\triangleq&2f^\prime(x)\arctan(f(x))+G_{2,2}^1(x)+G_{2,2}^2(x).
     \end{eqnarray*}  
    
     The last term  was  treated in the preceding estimates and we obtain as in \eqref{Tata13} 
     \begin{equation}\label{Tata14}
 \|G_{2,2}^2\|_{X}
   \le C\|f^\prime\|_{D}^{\frac{1}{3}}\Big(\|f^\prime\|_X+\|f^\prime\|_X^{16}\Big).
   \end{equation}
It remains to estimate $G_{2,2}^1$ which can be split into two terms 

$$
G_{2,2}^1(x)=\widehat{G}_{\textnormal{int},r}(x)+\widehat{G}_{\textnormal{ext},r}(x)
$$
with
$$
\widehat{G}_{\textnormal{int},r}(x)=\bigintsss_{|y|\leq r}\frac{\big[\Delta_yf(x)-yf^\prime(x)\big] \Delta_y f^\prime(x)}{y^2+(f(x+y)+f(x))^2} dy
$$
and
$$
\widehat{G}_{\textnormal{ext},r}(x)=\bigintsss_{M\geq |y|\geq r}\frac{\big[\Delta_yf(x)-yf^\prime(x)\big] \Delta_y f^\prime(x)}{y^2+(f(x+y)+f(x))^2} dy.
$$

Now we shall proceed as in the proof of \mbox{Theorem \ref{th1}.} Let $r\in (0,1)$ and $x_1,x_2\in \R$ such that $|x_1-x_2|\leq r$. First it is clear that 
\begin{equation}\label{zita1}
  |\Delta_y f^\prime(x)|\le \omega_{f^\prime}(|y|).
\end{equation}
In addition, using Taylor formula we get

\begin{equation}\label{zita2}
 |\Delta_yf(x)-yf^\prime(x)|\le |y|\omega_{f^\prime}(|y|).
\end{equation}
 
 Therefore 
 \begin{eqnarray*}
  |\widehat{G}_{\textnormal{int},r}(x)|\le \int_{|y|\leq r}\frac{[\omega_{f^\prime}(|y|)]^2}{|y|} dy.
  \end{eqnarray*}
  It follows that
  \begin{equation}\label{kaou11}
  \sup_{|x_1-x_2|\le r}|\widehat{G}_{\textnormal{int},r}(x_2)-\widehat{G}_{\textnormal{int},r}(x_1)|\le 4 \bigintsss_{0}^r\frac{[\omega_{f^\prime}(y)]^2}{y} dy.
  \end{equation}
Hence by Fubini's theorem
\begin{eqnarray*}
 \bigintsss_0^1 \sup_{|x_1-x_2|\le r}|\widehat{G}_{\textnormal{int},r}(x_1)-\widehat{G}_{\textnormal{int},r}(x_1)|\frac{dr}{r}&\le& 4 \bigintsss_{0}^1\frac{[\omega_{f^\prime}(y)]^2}{y} |\ln y|dy.
 \end{eqnarray*}
 From the definition and the monotonicity of the modulus of continuity one deduces that for any $r\in (0,1)$
 \begin{eqnarray*}
|\ln r|\omega_{f^\prime}(r)\le \bigintsss_r^1 \frac{\omega_{f^\prime}(y)}{y}dy\le  \|f^\prime\|_{D} 
\end{eqnarray*}
which implies that
\begin{equation}\label{gkl1}
 \bigintsss_0^1 \sup_{|x_1-x_2|\le r}|\widehat{G}_{\textnormal{int},r}(x_1)-\widehat{G}_{\textnormal{int},r}(x_1)|\frac{dr}{r}\le  4 \|f^\prime\|_{D}^2.
 \end{equation}
 To get the suitable estimate in $C^s$ we come back to \eqref{kaou11} which gives 
 \begin{eqnarray*}
  \sup_{|x_1-x_2|\le r}|\widehat{G}_{\textnormal{int},r}(x_2)-\widehat{G}_{\textnormal{int},r}(x_1)|&\le &4\|f^\prime\|_s^2 \int_{0}^ry^{2s-1} dy\\
  &\le& C \|f^\prime\|_s^2 r^{2s}\end{eqnarray*}
  and thus 
 \begin{equation}\label{gkl2}
  \sup_{|x_1-x_2|\le 1}\frac{|\widehat{G}_{\textnormal{int},r}(x_2)-\widehat{G}_{\textnormal{int},r}(x_1)|}{|x_1-x_2|^s}  \le C \|f^\prime\|_s^2.
   \end{equation}

As to $\widehat{G}_{\textnormal{ext},r}$ one writes 
\begin{eqnarray*}
\widehat{G}_{\textnormal{ext},r}(x_1)-\widehat{G}_{\textnormal{ext},r}(x_2)&=&\int_{M\geq |y|\geq r}\frac{\mathcal{N}(x_1,y)-\mathcal{N}(x_2,y)}{\mathcal{K}(x_1)} dy\\
&+& \int_{M\geq |y|\geq r}\frac{\mathcal{N}(x_2,y)\big[\mathcal{K}(x_2,y)-\mathcal{K}(x_1,y)}{\mathcal{K}(x_1,y)\mathcal{K}(x_2,y)} dy
\end{eqnarray*}
with
$$
\mathcal{N}(x,y)=[\Delta_yf(x)-yf^\prime(x)\big] \Delta_y f^\prime(x)\quad \textnormal{and} \quad  \mathcal{K}(x,y)=y^2+(f(x)+f(x+y))^2.
$$
Notice that from \eqref{zita1} and \eqref{zita2} one gets
\begin{equation}\label{draa1}
|\mathcal{N}(x_1,y)-\mathcal{N}(x_2,y)|\le C |y|\omega_{f^\prime}(r) \omega_{f^\prime}(|y|)\quad \hbox{and}\quad |\mathcal{N}(x,y)|\le 2  |y|\omega_{f^\prime}(|y|)\|f^\prime\|_{L^\infty}.
\end{equation}
In addition using  straightforward calculus  we obtain
\begin{eqnarray*}
|\mathcal{K}(x_1,y)-\mathcal{K}(x_2,y)|\le C r \|f^\prime\|_{L^\infty}\big(\sqrt{\mathcal{K}(x_1,y)}+\sqrt{\mathcal{K}(x_2,y)}\big).
\end{eqnarray*}
Thus
\begin{eqnarray*}
 \sup_{|x_1-x_2|\le r}\frac{|\mathcal{N}(x_2,y)||\mathcal{K}(x_2,y)-\mathcal{K}(x_1,y)|}{\mathcal{K}(x_1,y)\mathcal{K}(x_2,y)}&\le& C r \|f^\prime\|_{L^\infty}^2 \frac{\omega_{f^\prime}(|y|)}{|y|^2}\cdot
\end{eqnarray*}
Hence  we get by Fubini's theorem and \eqref{L1}
\begin{eqnarray*}
\bigintsss_0^1\sup_{|x_1-x_2|\le r}\bigintsss_{\{M\geq |y|\geq r\}}\frac{|\mathcal{N}(x_1,y)-\mathcal{N}(x_2,y)|}{\mathcal{K}(x_1)} dy\frac{dr}{r}
&\le& \bigintsss_0^1\bigintsss_{\{M\geq |y|\geq r\}}\omega_{f^\prime}(r) {\omega_{f^\prime}(|y|)}\frac{dy}{|y|}\frac{dr}{r}\\
&\le& C\|f^\prime\|_{D}^2
\end{eqnarray*}
and 
\begin{eqnarray*}
{{\bigintsss_0^1{\sup_{|x_1-x_2|\le r}\bigintsss_{\{M\geq |y|\geq r\}}\frac{\scriptstyle {|\mathcal{N}(x_2,y)||\mathcal{K}(x_2,y)-\mathcal{K}(x_1,y)|}}{\scriptstyle{\mathcal{K}(x_1,y)\mathcal{K}(x_2,y)}}dy \frac{dr}{r}}}}&\le& C  \|f^\prime\|_{L^\infty}^2\bigintsss_0^1\bigintsss_{\{M\geq |y|\geq r\}}
{{ \frac{\omega_{f^\prime}(|y|)}{|y|^2}}}dy dr
\\
&\le& C\|f^\prime\|_{L^\infty}^2\|f^\prime\|_{D}.
\end{eqnarray*}

Finally we obtain
\begin{eqnarray*}
\bigintsss_0^1\sup_{|x_1-x_2|\le r}|\widehat{G}_{\textnormal{ext},r}(x_1)-\widehat{G}_{\textnormal{ext},r}(x_2)|\frac{dr}{r}&\le &C\|f^\prime\|_{D}^2+C\|f^\prime\|_{L^\infty}^2\|f^\prime\|_{D}.
\end{eqnarray*}
As to the estimate in $C^s$ we use \eqref{draa1} which implies that
\begin{eqnarray*}
\bigintsss_{\{r\le|y|\le M\}}\frac{|\mathcal{N}(x_1,y)-\mathcal{N}(x_2,y)|}{\mathcal{K}(x_1)}dy&\le& C\|f^\prime\|_s r^s \bigintsss_{\scriptstyle\{r\le|y|\le M\}}\frac{\omega_{f^\prime}(|y|)}{|y| }dy\\
&\le& C\|f^\prime\|_s\|f^\prime\|_D r^s
\end{eqnarray*}
and 
\begin{eqnarray*}
\bigintsss_{\{M\geq |y|\geq r\}}\frac{|\mathcal{N}(x_2,y)||\mathcal{K}(x_2,y)-\mathcal{K}(x_1,y)|}{\mathcal{K}(x_1,y)\mathcal{K}(x_2,y)}dy&\le& C  \|f^\prime\|_{L^\infty}^2\|f^\prime\|_s  r\bigintsss_{\{M\geq |y|\geq r\}}
 \frac{dy}{|y|^{2-s}} 
\\
&\le& C\|f^\prime\|_{L^\infty}^2\|f^\prime\|_{s} r^s.
\end{eqnarray*}
It follows from Sobolev embedding $C^s\hookrightarrow L^\infty$  that
$$
\sup_{|x_1-x_2|\le r}\frac{|\widehat{G}_{\textnormal{ext},r}(x_1)-\widehat{G}_{\textnormal{ext},r}(x_2)|}{|x_1-x_2|^s}\leq C\|f^\prime\|_{D}\|f^\prime\|_s+C\|f^\prime\|_{D}\|f^\prime\|_{s}^2.
$$
Combining the foregoing  estimates with \eqref{gkl1} and \eqref{gkl2} we deduce 
$$
\|G_{2,2}^1\|_{X}\le C\|f^\prime\|_{D}\big(\|f^\prime\|_X+\|f^\prime\|_X^2\big).
$$
Putting together this estimate with  \eqref{Tata13} and \eqref{Tata14} we get
\begin{equation}\label{Tata17}
 \|G_2\|_{X}
   \le C\|f^\prime\|_{D}^{\frac{1}{3}}\Big(\|f^\prime\|_X+\|f^\prime\|_X^{16}\Big).
   \end{equation}
Now using  \eqref{pgh3} and  \eqref{Tata17} we find 
   \begin{eqnarray*}
 \nonumber \|G\|_{X}
  & \le &C\|f^\prime\|_X\big(1+\|f^\prime\|_D^{7}\big)+C\|f^\prime\|_{D}^{\frac{1}{3}}\Big(\|f^\prime\|_X+\|f^\prime\|_X^{16}\Big)\\
  &\le&C\big(1+\|f^\prime\|_{D}^{\frac{1}{3}}\big)\Big(\|f^\prime\|_X+\|f^\prime\|_X^{16}\Big) 
   \end{eqnarray*}
   which ends the proof of Proposition \ref{prop10}
  \end{proof}   
  \subsection{A priori estimates}\label{Secdez2}
  The aim of this section is to establish weak and strong a priori estimates for solutions to the equation \eqref{graph1}. This part  is the cornerstone of the local well-posedness theory. The main result of this section reads as follows.   \begin{proposition}\label{prop20}
  Let $f:[0,T]\times\R\to\R$ be  a smooth solution for the graph equation \eqref{graph1}.  Assume that the initial data is positive and with compact support $K_0.$ Then the following assertions hold true.
  \begin{enumerate}
    \item For any $t\in[0,T]$, the function $f_t$ is positive and  
  $$
 \forall t\in [0,T],\quad \| f(t)\|_{L^\infty}\leq  \|f_0\|_{L^\infty}.
  $$
  \item For any $t\in [0,T]$, we have
  $$
  \|f(t)\|_{L^1}= {\|f_0\|_{L^1}} e^{-t}.
  $$

  \item The support a $\hbox{supp }f_t$ is contained in the convex hull of  $K_0$, that is
    $$
\forall\,t\in[0,T], \quad \textnormal{supp } f(t)\subset \textnormal{Conv} K_0.
$$
\item Set $X=C^\star_K$ or  $X=C^s_K,$ with $s\in (0,1).$ If  $f_0^\prime\in X$ then there exists $T$ depending only on $\|f_0^\prime\|_X$ such that $f^\prime\in L^\infty([0,T];X)$. \end{enumerate}
  \end{proposition}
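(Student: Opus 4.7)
The plan is to treat the four assertions in turn, each relying on a structural feature of the graph equation and on the estimates already established in the excerpt.

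For (1), I use two structural facts about $u_2$. From the identity $\log\frac{(b-a)^2+y^2}{(b+a)^2+y^2}=\log\!\big(1-\tfrac{4ab}{(b+a)^2+y^2}\big)$ applied with $a=f(x),\,b=f(x+y)$, one has $u_2\le 0$ whenever $f\ge 0$; hence along any characteristic of $u_1$ the quantity $f(t,\psi_1(t,x))$ is nonincreasing, giving $\|f(t)\|_{L^\infty}\le \|f_0\|_{L^\infty}$. The same identity gives the lower bound $u_2\ge -C\,\|f\|_{L^\infty} f$ valid on $\{f\ge 0\}$ (using $\log(1-x)\ge -x/(1-x)$), so a standard bootstrap/Gr\"onwall argument along characteristics yields $f(t)\ge 0$ for all $t$, using also the vanishing fact $u_2(t,x_0)=0$ whenever $f(t,x_0)=0$ and $f(t,\cdot)\ge 0$. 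For (2), I invoke the 2D divergence identity $\mathrm{div}\,v=-\rho$ coming from $-\tfrac{1}{2\pi}\mathrm{div}\tfrac{X}{|X|^2}=-\delta$, which yields $\tfrac{d}{dt}|D_t|=\int_{D_t}\mathrm{div}\,v\,dx=-|D_t|$ and hence $|D_t|=e^{-t}|D_0|$; by reflection symmetry $|D_t|=2\|f(t)\|_{L^1}$, giving the claim.

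For (3), let $R(t)$ be the rightmost boundary point of $\mathrm{supp}\,f(t,\cdot)$. By symmetry, $(R(t),0)$ is the cusp of the patch $D_t$ on the real axis, and for smooth solutions boundary points move with the 2D velocity, so $\dot R(t)=u_1(t,R(t))$ with $R(0)=b:=\sup K_0$. At this endpoint $f(t,R(t))=0$ and $f(t,R(t)+y)=0$ for $y>0$, so the formula \eqref{fields6} reduces to
\[
u_1(t,R(t))=\tfrac{1}{\pi}\int_{-\infty}^{0}\arctan\!\Big(\tfrac{f(t,R(t)+y)}{y}\Big)dy\le 0,
\]
since $f\ge 0$ by Part (1) and $y<0$. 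Hence $\dot R\le 0$ and $R(t)\le b$. A symmetric argument for the left endpoint $L(t)\ge a$ closes the confinement $\mathrm{supp}\,f(t,\cdot)\subset[a,b]$.

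The main content is (4). Differentiating the equation and using \eqref{Ham11} gives
\[
\partial_t g+u_1\partial_x g=\tfrac{1}{2\pi}(F-G),\qquad g:=f'.
\]
Integrating along characteristics and applying the composition laws \eqref{comp1} or \eqref{comp3} to each term of
\[
g(t,y)=g_0(\psi_1^{-1}(t,y))+\tfrac{1}{2\pi}\!\int_0^t(F-G)(\tau,\psi_1(\tau,\psi_1^{-1}(t,y)))\,d\tau
\]
yields
\[
\|g(t)\|_X\le C(\|g_0\|_X+\|g_0\|_{L^\infty})\Pi(t)+C\!\int_0^t(\|F(\tau)\|_X+\|G(\tau)\|_X)\tfrac{\Pi(t)}{\Pi(\tau)}d\tau,
\]
where $\Pi(t)$ is polynomial in $\int_0^t\|\partial_x u_1(\tau)\|_{L^\infty}d\tau$ when $X=C^\star_K$ (thanks to the logarithmic composition law \eqref{comp3}) and exponential in the same quantity when $X=C^s_K$. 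Proposition~\ref{prop10} bounds $\|F\|_X+\|G\|_X$ by a polynomial in $\|f'\|_X$; recognizing $\partial_x u_1$ from \eqref{Derr} as a sum of truncated bilinear Cauchy-type and modified curved-Cauchy operators and invoking Proposition~\ref{propCart1} and Theorem~\ref{th1}, one similarly controls $\|\partial_x u_1\|_{L^\infty}$ by a polynomial in $\|f'\|_X$. Inserting these bounds yields a scalar differential inequality $y'(t)\le P(y(t))$ for $y(t)=\|f'(t)\|_X$, which can be integrated on an interval $[0,T]$ with $T$ depending only on $y(0)=\|f'_0\|_X$ and on $\mathrm{diam}(K_0)$.

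The main obstacle is coupling the high polynomial powers of $\|f'\|_X$ appearing in the source bounds of Proposition~\ref{prop10} (up to order $16$ in the case of $G$) with the composition-law factor $\Pi(t)$; this is harmless in the Dini case because $\Pi(t)$ is only polynomial in time, and in the H\"older case because the exponential factor in $\int\|\partial_x u_1\|_{L^\infty}$ remains finite on short intervals. A secondary but important point is that the confinement from Part~(3) is already used in the local theory, as it ensures that $\mathrm{supp}\,f(t,\cdot)$ stays inside the fixed compact $K_0$, so that the constants in Propositions~\ref{prop10}, \ref{propCart1} and Theorem~\ref{th1} (which depend on the diameter of the support) remain uniform in $t$.
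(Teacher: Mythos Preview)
Parts (2)--(4) match the paper's argument closely; your refinement that the composition factor is only polynomial in $\int\|\partial_x u_1\|_{L^\infty}$ for $X=C^\star_K$ is correct though the paper simply uses the uniform bound $e^{V(t)}$ in \eqref{Tham1}, and it bounds $\|\partial_x u_1\|_{L^\infty}$ by direct inspection of \eqref{Derr} (see \eqref{zerda}) rather than by invoking Theorem~\ref{th1}.

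There is, however, a genuine gap in Part~(1). Your claimed lower bound $u_2\ge -C\|f\|_{L^\infty}f$ via $\log(1-\xi)\ge -\xi/(1-\xi)$ fails: that inequality gives
\[
u_2(x)\;\ge\;-\frac{f(x)}{\pi}\int_{\R}\frac{f(x+y)}{y^2+\big(f(x+y)-f(x)\big)^2}\,dy,
\]
and for $f(x)>0$ the integrand behaves like $f(x)\,y^{-2}\big(1+f'(x)^2\big)^{-1}$ near $y=0$, so the integral diverges and no finite constant $C$ can be extracted this way. The factorization $u_2=fU$ with $U\in L^\infty$ is true, but the bound on $U$ depends on $\|f'\|_D$, not on $\|f\|_{L^\infty}$, and cannot be reached by the crude log inequality. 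The paper obtains it by applying Taylor's formula in the variable $\tau\in[0,f(x)]$ to $\log\frac{y^2+(\tau-f(x+y))^2}{y^2+(\tau+f(x+y))^2}$ and then desingularizing the resulting integrals through the change of variables $y=(1\pm\tau)f(x)z$, arriving at the identity \eqref{maximum} with $\|U\|_{L^\infty}\le C\big(1+\|f'\|_D^6\big)$. With that in hand, positivity follows from the explicit formula $f(t,\psi(t,x))=f_0(x)\exp\int_0^t U(\tau,\psi(\tau,x))\,d\tau$, after which your observation $u_2\le 0$ gives the maximum principle. Your bootstrap scheme would close too, but only after this factorization is supplied.
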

  \begin{proof}
  ${\bf{(1)}}$  To get the first part about the persistence of the positivity of  we shall prove that
  \begin{equation}\label{maximum}
  \forall x\in \R,\quad u_2(t,x)=f(t,x) U(t,x)
 \end{equation}
  with
  $$
  \|U(t)\|_{L^\infty}\leq C\big(1+\|f^\prime(t)\|_{D}^6\big)
  $$
  and  $C$  being a constant depending only on the size of  the support of $f_t.$ Note from the point $({\bf{2}})$ of the current proposition  that the support of $f_t$ is contained in a fixed compact and therefore the constant $C$ can be taken  independent of the time variable. 
  Assume for a while   \eqref{maximum} and let us see how to propagate the positivity. Denote by  $\psi$ the flow associated to the velocity $u_1$, that is, the solution of the ODE
 \begin{equation}\label{flot1}
  \partial_t\psi(t,x)=u_1(t,\psi(t,x)),\quad \psi(0,x)=x.
\end{equation}
Recall that
\begin{equation*}
 u_1(t,x)=\frac{1}{2\pi}\bigintsss_{\R}\Bigg\{\arctan\Big(\frac{f(t,x+y)-f(t,x)}{y}\Big)-\arctan\Big(\frac{f(t,x+y)+f(t,x)}{y}\Big)\Bigg\} dy.
\end{equation*}
Set
$$
\eta(t,x)=f(t,\psi(t,x))
$$
then 
\begin{eqnarray}\label{gfunct}
\partial_t \eta(t,x)&=&u_2(t,\psi(t,x))\\
\nonumber&=&\eta(t,x) U(t,\psi(t,x)).
\end{eqnarray}
Consequently
$$
\eta(t,x)=f_0(x) e^{\int_0^tU(\tau,\psi(\tau,x))d\tau}.
$$
Since the flow $\psi(t):\R\to\R$ is a diffeomorphism we get the representation   
\begin{equation}\label{flot2}
 f(t,x)=f_0\big(\psi^{-1}(t,x)\big) e^{\int_0^tU[\tau,\psi(\tau,\psi^{-1}(t,x))]d\tau}.
\end{equation}
 As an immediate consequence we get the persistance through the time of  the  positivity of the solution.  Let us now come back to the proof of  the identity \eqref{maximum}.  To alleviate the notation we remove the variable $t$ from the functions. Applying Taylor formula to the function
 $$
\tau\in [0, f(x)]
\mapsto g(\tau)\triangleq\log\Bigg[\frac{y^2+\big(\tau -f(x+y)\big)^2}{y^2+\big(\tau+f(x+y)\big)^2}\Bigg] $$
 yields to
\begin{eqnarray*}
 -2\pi u_2(x)&=&f(x)\bigintsss_0^1\bigintsss_{-M}^M\frac{f(x+y)-\tau f(x)}{y^2+\big[f(x+y)-\tau f(x)\big]^2}d\tau dy\\
 &+&f(x)\bigintsss_0^1\bigintsss_{-M}^M\frac{f(x+y)+\tau f(x)}{y^2+\big[f(x+y)+\tau f(x)\big]^2}d\tau dy\\
 &\triangleq&f(x)V_1(x)+f(x)V_2(x).
\end{eqnarray*}
 Using once again Taylor formula we get the following expressions
\begin{eqnarray*}
V_1(x)&=&\bigintsss_0^1\bigintsss_{-M}^M\frac{(1-\tau) f(x)}{y^2+\big[(1-\tau) f(x)+y\int_0^1f^\prime(x+\theta y) d\theta\big]^2}d\tau dy\\
&+&\textnormal{p.v.} \bigintsss_0^1\bigintsss_{-M}^M\frac{y\int_0^1f^\prime(x+\theta y) d\theta}{y^2+\big[f(x+y)-\tau f(x)\big]^2}d\tau dy \\
&\triangleq& V_{1,1}(x)+V_{1,2}(x)
\end{eqnarray*}
and
\begin{eqnarray*}
V_2(x)&=&\bigintsss_0^1\bigintsss_{-M}^M\frac{(1+\tau) f(x)}{y^2+\big[(1+\tau) f(x)+y\int_0^1f^\prime(x+\theta y) d\theta\big]^2}d\tau dy\\
&+&\textnormal{p.v.} \bigintsss_0^1\bigintsss_{-M}^M\frac{y\int_0^1f^\prime(x+\theta y) d\theta}{y^2+\big[f(x+y)+\tau f(x)\big]^2}d\tau dy \\
&\triangleq& V_{2,1}(x)+V_{2,2}(x).
\end{eqnarray*}
To estimate $V_{1,1}$ and $V_{2,1}$ we can assume that $f(x)>0$. Then making the change of variables $z\mapsto y=(1-\tau) f(x) z$ leads to
\begin{equation}\label{V11}
V_{1,1}(x)=\bigintsss_0^1\bigintsss_{-\frac{M}{(1-\tau)f(x)}}^{\frac{M}{(1-\tau)f(x)}}\frac{d\tau dz}{z^2+\big[1+z\int_0^1f^\prime\big(x+\theta (1-\tau) f(x) z\big) d\theta\big]^2}\cdot
\end{equation}
Using \eqref{Eqw1} we deduce that
\begin{equation}\label{kiri1}
\|V_{1,1}\|_{L^\infty}\leq C\big(1+\|f^\prime\|_{L^\infty}^2\big).
\end{equation}
Similarly we get
\begin{equation}\label{kiri2}
\|V_{2,1}\|_{L^\infty}\leq C\big(1+\|f^\prime\|_{L^\infty}^2\big).
\end{equation}
Let us now bound $V_{j,2}, j=1,2$ 
First  by symmetry we write  \begin{eqnarray*}
V_{1,2}(x)&=&\bigintsss_0^1 \bigintsss_{0}^{M}\frac{y\int_0^1f^\prime(x+\theta y) d\theta\big[f(x-y)-f(x+y)\big]\psi_\tau(x,y)}{\Big(y^2+[f(x+y)-\tau f(x)]^2\Big) \Big(y^2+[f(x-y)-\tau f(x)]^2\Big)} dy d\tau\\
&+&\bigintsss_0^1 \bigintsss_{0}^{M}\frac{y\int_0^1\big[f^\prime(x+\theta y)-f^\prime(x-\theta y)\big] d\theta}{y^2+[f(x-y)-\tau f(x)]^2} dy d\tau
\end{eqnarray*}
where
\begin{eqnarray*}
\psi_\tau(x,y)&=&f(x+y)+f(x-y)-2\tau f(x)\\
&=&2(1-\tau) f(x)+y\int_0^1\big[f^\prime(x+\theta y)-f^\prime(x-\theta y)\big] d\theta.
\end{eqnarray*}
Thus
\begin{eqnarray*}
\|V_{1,2}\|_{L^\infty}&\le& C\bigintsss_0^1\bigintsss_{0}^{M}\frac{\|f^\prime\|_{L^\infty}^2y^2\big[(1-\tau) f(x)+y\omega_{f^\prime}(y)\big]}{\Big(y^2+[f(x+y)-\tau f(x)]^2\Big) \Big(y^2+[f(x-y)-\tau f(x)]^2\Big)} dyd\tau\\
&+&C\bigintsss_0^1 \bigintsss_{0}^{M}\frac{\omega_{f^\prime}(y)}{y} dy d\tau.
\end{eqnarray*}
Similarly to  $V_{1,1}$ one gets 
\begin{eqnarray*}
\bigintsss_0^1\bigintsss_{0}^{M}\frac{y^2(1-\tau) f(x)\,  dy d\tau}{\Big(y^2+[f(x+y)-\tau f(x)]^2\Big) \Big(y^2+[f(x-y)-\tau f(x)]^2\Big)}&\le& C \big(1+\| f^\prime\|_{L^\infty}^4\big).
\end{eqnarray*}
It follows that
\begin{eqnarray}\label{V24}
\nonumber\|V_{1,2}\|_{L^\infty}&\leq&  C\|f^\prime\|_{L^\infty}^2\Big( 1+\| f^\prime\|_{L^\infty}^4+\int_0^M\frac{\omega_{f^\prime}(y)}{y} dy\Big)+C\|f^\prime\|_D\\
&\leq& C\|f^\prime\|_{L^\infty}^2\Big( 1+\| f^\prime\|_{L^\infty}^4+\|f^\prime\|_D\Big)+C\|f^\prime\|_D.
\end{eqnarray}
The estimate of $V_{2,2}$ can be done in a similar way and one obtains
\begin{equation}\label{V22}
\|V_{2,2}\|_{L^\infty}=  C\|f^\prime\|_{L^\infty}^2\Big( 1+\| f^\prime\|_{L^\infty}^4+\|f^\prime\|_D\Big)+C\|f^\prime\|_D.
\end{equation}
Combining both last estimates with \eqref{kiri1} and \eqref{kiri2} we finally get according to the embedding \eqref{Imbed1} 
$$
\|U\|_{L^\infty}\leq C\big(1+\|f^\prime\|_{D}^6\big)
$$
where the constant $C$ depends only on the size of the support of $f.$

Now let us establish the maximum principle. From \eqref{fields6} combined with the positivity of $f_t$ one gets 
$$
\forall t\in [0,T], \,\forall x\in \R\quad u_2(t,x)\leq0.
$$
Coming back to \eqref{gfunct} we deduce that
$$
\partial_t\eta(t,x)\leq0
$$
which implies in turn that
$$
\forall t\in [0,T], \,\forall x\in \R\quad f(t,x)\leq f_0\big(\psi^{-1}(t,x)\big).
$$
Combined with the positivity of $f(t)$ we deduce immediately the maximum principle
$$
\forall t\in [0,T],\quad \|f(t)\|_{L^\infty}\leq\|f_0\|_{L^\infty}.
$$
Now we intend to provide more refined identity that we shall use later in studying the asymptotic behavior of the solution. Actually we have 
\begin{equation}\label{VY1}
u_2(t,x)=-f(t,x)\big(1+R(t,x)),
\end{equation}
with
$$
 \|R(t)\|_{L^\infty}\leq C\|f^\prime(t)\|_{D}\Big(1+\|f^\prime(t)\|_{L^\infty}^5\Big).
$$
First note that $R=\sum_{i,j=1}^2 V_{i,j}$. The estimates of $V_{1,2}$ and $V_{2,2}$ are done in \eqref{V24} and  \eqref{V22}. However  to deal with $V_{1,1}$ and similarly $V_{2,1}$ we return to the expression \eqref{V11}. Set
$$
\tau\mapsto K(\tau)=\frac{1}{z^2+\big[1+z\tau\big]^2}\cdot
$$
Easy computations using \eqref{Eqw1} show the existence of a  positive constant $C$ such that
\begin{eqnarray*}
\forall\tau,z\in\R,\quad  |K^\prime(\tau)|&=&\frac{2|z||1+z\tau|}{\big(z^2+[1+z t]^2\big)^2}\\
&\le&\frac{1}{z^2+[1+z \tau]^2}\\
&\le& C\frac{1+\tau^2}{1+z^2}\cdot
\end{eqnarray*}
Applying  the mean value theorem yields
$$
|K(\tau)-\frac{1}{1+z^2}|\leq C|\tau|\frac{1+\tau^2}{1+z^2}\cdot
$$
Therefore we get
\begin{equation*}
\Bigg|V_{1,1}(x)-\bigintsss_0^1\bigintsss_{-\frac{M}{(1-\tau)f(x)}}^{\frac{M}{(1-\tau)f(x)}}\frac{dz d\tau}{1+z^2}\Bigg|\leq C \|f^\prime\|_{L^\infty}\Big(1+\|f^\prime\|_{L^\infty}^2\Big).
\end{equation*}
which implies that 
\begin{equation}\label{V1234}
\bigg|V_{1,1}(x)-\pi\big|\leq C \|f^\prime\|_{L^\infty}\Big(1+\|f^\prime\|_{L^\infty}^2\Big)+ C\|f\|_{L^\infty}.
\end{equation}
Similarly we obtain
\begin{equation}\label{V12345}
\big|V_{2,1}(x)-\pi\big|\leq C\|f^\prime\|_{L^\infty}\Big(1+\|f^\prime\|_{L^\infty}^2\Big)+C\|f\|_{L^\infty}.
\end{equation}
Putting together \eqref{V24},\eqref{V22}, \eqref{V1234}, \eqref{V12345} we get \eqref{VY1}.
 \vspace{0,3cm}
 
 ${\bf{(2)}}$
Integrating the equation \eqref{sqg} in the space variable we get after integration by parts 
\begin{eqnarray*}
\frac{d}{dt}\int_{\R}\rho(t,x)dx&=&\int_{\R}\textnormal{div}\,v(t,x)\rho(t,x) dx\\
&=&-\int_{\R}\rho^2(t,x) dx\\
&=&-\int_{\R}\rho(t,x) dx
\end{eqnarray*}
where in the last line we have used that for the characteristic function one has $\rho^2=\rho$. The time decay follows then easily.
 
  \vspace{0,3cm}
    ${\bf{(3)}}$ According to the representation of the solution given by \eqref{flot2} we have easily that the support of $f(t)$ is the image by the flow $\psi(t)$ of the initial support, that is, 
    \begin{equation}\label{repX1}
    K_t=\psi(t,K_0).
    \end{equation}
 We have to check that if $K_0\subset [a,b]$, with $a<b,$ then $K_t\subset [a,b].$ To do so it is enough to prove that
 $$
 \psi(t,[a,b])\subset [a,b].
 $$
 This means somehow that the flow is contractive. As $\psi(t)$ is an homeomorphism then necessary $ \psi(t,[a,b])=[\psi(t,a),\psi(t,b)]$. Hence to get the desired inclusion it suffices to establish that
 $$
a_t\triangleq  \psi(t, a)\geq a\quad\hbox{and}\quad b_t\triangleq \psi(t,b)\leq b.
 $$
This reduces to study the derivative in time of $a_t$ and $b_t$. First one has 
 $$
 \dot{a}_t=u_1(t,a_t)\quad\hbox{and}\quad  \dot{b}_t=u_1(t,b_t).
 $$
 Since $f(t,y)=0, \forall y\notin (a_t,b_t)$ and $f_t$ is positive eveywhere then
 \begin{equation*}
u_1(t,a_t)=\frac{1}{\pi}\int_{0}^{b_t-a_t}\arctan\Big(\frac{f_t(a_t+y)}{y}\Big) dy\geq0.
\end{equation*}
Hence $\dot{a}_t\geq0$ and therefore $a_t\geq a,$ for any $t\in[0;T].$
 
 Similarly we get
 \begin{equation*}
u_1(t,b_t)=-\frac{1}{\pi}\int_{0}^{b_t-a_t}\arctan\Big(\frac{f_t(b_t-y)}{y}\Big) dy\leq0
\end{equation*}
 which implies that $b_t\leq b,$ for any $t\in[0;T].$ This ends the proof of the point ${\bf{(2)}}$.
  \vspace{0,3cm}
 
    ${\bf{(4)}}$ Recall from \eqref{graphder} and \eqref{Ham11} that $g\triangleq f^\prime$ satisfies the equation
    $$
    \partial_t g+u_1\partial_1 g=\frac{1}{2\pi}\big( F-G\big).
    $$
  Set $h(t,x)=g(t,\psi(t,x))$, where $\psi$ is the flow defined in \eqref{flot1}. Then
  $$
  \partial_t h(t,x)=\frac{1}{2\pi}\Big( F\big(t,\psi(t,x)\big)-G\big(t,\psi(t,x)\big)\Big).
  $$
  Thus
 $$
  g(t,x)=g_0(\psi^{-1}(t,x)+\frac{1}{2\pi}\int_0^t (F-G)\big(\tau,\psi\big(\tau,\psi^{-1}(t,x\big)\big)d\tau.
    $$ 
  Recall the classical estimate
 \begin{equation}\label{Tham01}
  \big\|\partial_x\big[\psi\big(\tau,\psi^{-1}(t,\cdot)\big)\big]\big\|_{L^\infty}\leq e^{\int_\tau^t\|\partial_xu_1(t^\prime,\cdot)\|_{L^\infty}d t^\prime}
  \end{equation}
  that we may  combine with the 
composition laws \eqref{comp1} and \eqref{comp3}  to get
  \begin{equation}\label{Tham1}
  \|g(t)\|_{X}\leq Ce^{V(t)}\Big[\|g_0\|_{X}+\int_0^t\|(F-G)(\tau)\|_{X}d\tau\Big], \quad V(t)\triangleq \int_0^t\|\partial_x u_1(\tau)\|_{L^\infty} d\tau.
  \end{equation}  
  To estimate $\|\partial_x u_1(t)\|_{L^\infty}$ we come back to \eqref{Derr}. The first integral term can be restricted to a compact set $[-M,M]$ and thus 
  \begin{eqnarray*}
  \Bigg|\textnormal{p.v.}\bigintsss_{-M}^M\frac{f^\prime(x+y)-f^\prime(x) }{y^2+(f(x+y)-f(x))^2} ydy\Bigg|&\leq&2\bigintsss_0^M\frac{\omega_{f^\prime}(y)}{y} dy\\
  &\le& C\|f^\prime\|_D.
    \end{eqnarray*}
  As to the second term, the integral can be restricted to $[-M,M]$ and  we simply write
  \begin{eqnarray*}
     \textnormal{p.v.} \bigintsss_{\R}\frac{f^\prime(x+y)+f^\prime(x) }{y^2+(f(x+y)+f(x))^2} ydy&=&\textnormal{p.v.} \bigintsss_{-M}^M\frac{f^\prime(x+y)-f^\prime(x) }{y^2+(f(x+y)+f(x))^2} ydy\\
     &+&\textnormal{p.v.} \bigintsss_{\R}\frac{2f^\prime(x) }{y^2+(f(x+y)+f(x))^2} ydy.
   \end{eqnarray*}
   The first term of the right-hand side is controlled as before
   \begin{eqnarray*}
  \Big|\textnormal{p.v.}\int_{-M}^M\frac{f^\prime(x+y)-f^\prime(x) }{y^2+(f(x+y)+f(x))^2} ydy\Big|
  &\le& C\|f^\prime\|_D.  \end{eqnarray*}
  However for the last term it can be estimated as in the proof of Theorem \ref{th1}-$(1)$. One gets in view of \eqref{Eq10}, \eqref{lila90} and \eqref{embedd11} 
  $$
  \Bigg|\textnormal{p.v.} \int_{\R}\frac{y }{y^2+(f(x+y)+f(x))^2} dy\Bigg|\le C\Big(\|f^\prime\|_{L^\infty}^2+\|f^\prime\|_{L^\infty}\|f^\prime\|_{D}+\|f^\prime\|_{L^\infty}\Big).
  $$
  Hence using the embedding $X\hookrightarrow C^\star_K\hookrightarrow L^\infty$ we find
\begin{eqnarray}\label{zerda}
  \nonumber\|\partial_x u_1(t)\|_{L^\infty}&\leq& C\Big(\|f^\prime\|_{D}+\|f^\prime\|_{L^\infty}\|f^\prime\|_{D}\Big)\\
  &\le&C\Big(\|f^\prime(t)\|_{X}+\|f^\prime(t)\|_{X}^2\Big)
 \end{eqnarray}
 which implies that
\begin{equation}\label{ama56}
 V(t)\leq C t\Big(\|f^\prime\|_{L^\infty_tX}+\|f^\prime\|_{L^\infty_tX}^2\Big).
\end{equation}
 Using Proposition \ref{prop10} we obtain
 \begin{equation}\label{ama57}
 \|(F-G)(t)\|_X\le C\Big(\|f^\prime(t)\|_{X}+\|f^\prime(t)\|_{X}^{17}\Big).
\end{equation}
Plugging \eqref{ama56} and \eqref{ama57} into \eqref{Tham1} we obtain
$$
\|f^\prime\|_{L^\infty_T X}\leq e^{C T\big(\|f^\prime\|_{L^\infty_TX}+\|f^\prime\|_{L^\infty_TX}^2\big)}\Big[\|f_0^\prime\|_X+ T\big(\|f^\prime\|_{L^\infty_TX}+\|f^\prime\|_{L^\infty_TX}^{17}\big)\Big].
$$
This shows the existence of small $T$ depending only on $\| f_0^\prime\|_X$ and such that
$$
\|f^\prime\|_{L^\infty_T X}\le 2 \| f_0^\prime\|_X,
$$
which  ends the proof of the proposition.
  \end{proof}
  
  \subsection{Scheme construction of the solutions}\label{Const778}
  This section is devoted to the construction of the solutions to \eqref{graphder} in short time. Before giving a precise description about the method used here and based on a double regularization, let us explain the big lines of the strategy. The a priori estimates developed in the previous sections require some rigid properties like the confinement of the support, the positivity of the solution and  some nonlinear effects in order to control some singular terms as it was mentioned in Theorem \ref{th1}. So it appears so hard to find a linear scheme that respects all of  those constraints. The idea is to proceed with a nonlinear  double regularization scheme. First, we   fix a small parameter $\varepsilon>0$ used to regularize the singularity of the kernels around the origin,   and  second we elaborate an iterative nonlinear  scheme giving rise to  a family of solutions $(f_n^\varepsilon)_n$ that may  violate some of the mentioned constraints. With  this scheme we are able to derive  a priori estimates uniformly  with respect to $n$ during a short time $T_\varepsilon>0$, but this time  may shrink to zero as $\varepsilon$ goes to zero.  By compactness arguments we prove that this approximate solutions $(f_n^\varepsilon)_n$ converges as $n$ goes to infinity to a solution $f^\varepsilon$ living in our function space during the time interval $[0, T_\varepsilon]$. Now the function  $f^\varepsilon$ satisfies a modified nonlinear problem but the important fact is that all the a priori estimates developed in the preceding sections  hold uniformly  on $ \varepsilon$. This allows by a classical procedure  to implement the  bootstrap argument and prove that the family $(f^\varepsilon)_\varepsilon$ is actually  defined on some time interval $[0,T]$ independently on $\varepsilon.$ To conclude it remains to pass to the limit when $\varepsilon$ goes to zero and this allows to construct a solution for our initial problem.
  
  Let us now give more details about this double scheme regularization.  Consider the iterative scheme
  \begin{equation}\label{scheme}
\left\lbrace
\begin{array}{l}
\partial_t f_{n+1}^\EE+u_1^{\E}(f_n^\E)\partial_x f_{n+1}^\E=u_2^{\E}(f_{n+1}^\E), \, n\in\N, \\
f_0^\E(t,x)=f_0(x)\\
f_{n+1}^\E(0,x)= f_0(x)
\end{array}
\right.
\end{equation}
    with
  \begin{equation*}
 u_1^{\E}(g)(t,x)\triangleq\frac{1}{2\pi}\chi(x)\bigintsss_{|y|\geq\E}{{\chi}(y)\Bigg\{\arctan\Bigg(\frac{g(t,x+y)-g(t,x)}{y}\Bigg)+\arctan\Bigg(\frac{g(t,x+y)+g(t,x)}{y}\Bigg)\Bigg\}} dy
\end{equation*}
\begin{equation}\label{fieldsX6}
u_2^{\E}(g)(t,x)\triangleq\frac{1}{4\pi}\bigintsss_{|y|\geq\E}{\chi(y) \log\Bigg(\frac{y^2+(g(t,x+y)-g(t,x))^2}{y^2+(g(t,x+y)+g(t,x))^2} \Bigg)}dy.
\end{equation} 
The function $\chi$ is a positive smooth cut-off function taking the value $1$ on some interval $[-M,M]$ such that
$$
K_0, K_0-K_0\subset[-M,M]
$$  
with $K_0$ being the convex hull of  $\textnormal{supp }f_0$. The function $\chi$ is introduced in order to guarantee the convergence of the integrals. We shall see later by  using the support structure of the solutions that one can in fact remove this cut-off function.  Denote by
$$
\mathcal{E}_T=\Big\{f;\, f\in L^\infty([0,T]\times\R), f^\prime\in L^\infty([0,T],X)\Big\}
$$
equipped with the norm
$$
\|f\|_{\mathcal{E}_T}=\|f\|_{L^\infty([0,T]\times\R)}+\|\partial_xf\|_{ L^\infty([0,T],X)}
$$
where $X$ denotes  Dini space $C^\star$  or H\"{o}lder spaces $C^s(\R), 0<s<1$ and for the simplicity we shall during this part work only with H\"{o}lder space. We intend to explain the approach without giving all the details, because some of them are classical. Using the characteristics method, one can transform the equation \eqref{scheme} into a  fixed pint problem
$$
f_{n+1}=\mathcal{N}_n^\E(f_{n+1})\quad\hbox{with}\quad \mathcal{N}(f)(t,x)=f_0\big(\psi_{n,\EE}^{-1}(t,x)\big)+\int_0^t u_{2}^{\EE}(f)\Big(\tau,\psi_{n,\EE}\big(\tau,\psi_{n,\EE}^{-1}(t,x)\big)\Big)d\tau
$$
with $\psi_{n,\EE}$ being the one-dimensional flow associated to  $u_1^{n}(f_n^\E)$, that is, the solution of the ODE
\begin{equation}\label{FFlow1}
\psi_{n,\EE}(t,x)=x+\int_0^tu_1^{n}(f_n^\E)\big(\tau,\psi_{n,\EE}(\tau,x)\big) d\tau.
\end{equation}
It is plain that
$$
\|\mathcal{N}(f)(t)\|_{L^\infty}\leq \|f_0\|_{L^\infty}+\int_0^t\|u_{2}^{\EE}(f)(\tau)\|_{L^\infty}d\tau.
$$
Applying the elementary inequality: for $a>0, b,c\in\R_+$ 
$$\Big|\log\big(\frac{a+b}{a+c}\big)\Big|\leq \frac{b+c}{a}
$$
we get from  \eqref{fieldsX6} that
\begin{eqnarray*}
|u_{2}^{\EE}(f)(t,x)|&\leq& \frac{1}{4\pi}\int_{|y|\geq\E}\chi(y)\frac{f^2(t,x+y)+f^2(t,x)}{y^2}dy\\
&\le& C\E^{-2}\|f(t)\|_{L^\infty}^2.
\end{eqnarray*}
It follows that
\begin{equation}\label{FirstX1}
\|\mathcal{N}(f)\|_{L^\infty_TL^\infty}\leq \|f_0\|_{L^\infty}+C\E^{-2} T\|f\|_{L^\infty_TL^\infty}^2.
\end{equation}
We shall move to the estimate of $\|\partial_x \mathcal{N}(f)\|_{L^\infty_T X}$. Let us first start with the  estimate of $\|\partial_x\{f_0(\psi_{n,\EE}^{-1}\}\|_{L^\infty_T X}$. By straightforward computations using law products \eqref{lawX3}, composition laws \eqref{comp1} in H\"{o}lder spaces and the following classical estimates on the flow, 
$$
\|\partial_x\psi_{n,\EE}^{\pm1}\|_{L^\infty_TX}\leq Ce^{C\|\partial_x(u_1^{\E}(f_n^\E))\|_{L^1_TL^\infty} }\Big(1+\|\partial_x(u_1^{\E}(f_n^\E))\|_{L^1_TX}\Big)
$$
one gets
\begin{eqnarray*}
\|\partial_x\{f_0(\psi_{n,\EE}^{-1}\}\|_{L^\infty_TX}&\leq&\|\{\partial_x f_0\}(\psi_{n,\EE}^{-1})\|_{L^\infty_TX}\|\partial_x\psi_{n,\EE}^{-1}\|_{L^\infty_TX}\\
&\leq&
C\|\partial_xf_0\|_{X} e^{C\|\partial_x(u_1^{\E}(f_n^\E))\|_{L^1_TL^\infty} }\Big(1+\|\partial_x(u_1^{\E}(f_n^\E))\|_{L^1_TX}\Big).
\end{eqnarray*}
Differentiating the expression of $u_1^{\E}(f_n^\E))$ in \eqref{fieldsX6}  and making standard estimates we get easily
\begin{eqnarray*}
\|\partial_x\{u_1^{\E}(f_n^\E)(t)\}\|_{X}&\le& C+C\E^{-1}\|\partial_xf_n^\E(t)\|_{X}+C\E^{-3}\|\partial_xf_n^\E(t)\|_{L^\infty}\|f_n^\E(t)\|_{X}^2\\
&\le&C+C\E^{-1}\|f_n^\E\|_{\mathcal{E}_T}+C\E^{-3}\|f_n^\E\|_{\mathcal{E}_T}^3,
\end{eqnarray*}
where we have used
$$
\Big\|\frac{1}{y^2+f^2}\Big\|_{X}\leq C \|f\|_X^2 y^{-4}.
$$
Therefore
\begin{eqnarray}\label{SSin1}
\|\partial_x\{f_0(\psi_{n,\EE}^{-1}\}\|_{L^\infty_TX}&\leq&C\|\partial_xf_0\|_{X} e^{CT+C\E^{-1}T\|f_n^\E\|_{\mathcal{E}_T}+C\E^{-3}T\|f_n^\E\|_{\mathcal{E}_T}^3 }
\end{eqnarray}
and
\begin{equation}\label{Sflow1}
\|\partial_x\psi_{n,\EE}^{\pm1}\|_{L^\infty_TX}\leq Ce^{CT+CT\E^{-1}\|f_n^\E\|_{\mathcal{E}_T}+CT\E^{-3}\|f_n^\E\|_{\mathcal{E}_T}^3}.
\end{equation}
Similarly we get
\begin{eqnarray*}
\|\partial_x\{u_2^{\E}(f)\}\|_{L^\infty_TX}&\leq& C\E^{-2}\|\partial_xf\|_{L^\infty_TX}\|f\|_{L^\infty_TX}+C\E^{-4}\|\partial_xf\|_{L^\infty_TL^\infty}\|f\|_{L^\infty_TL^\infty}\|f\|_{L^\infty_TX}^2\\
&\leq& C\E^{-2}\|f\|_{\mathcal{E}_T}^2+C\E^{-4}\|f\|_{\mathcal{E}_T}^4.
\end{eqnarray*}

Combining this estimate with law products and \eqref{Sflow1} we deduce that
\begin{eqnarray*}
\|\partial_x\big\{u_{2}^{\EE}(f)\big(\tau,\psi_{n,\EE}(\tau,\psi_{n,\EE}^{-1})\big)\big\}\|_{X}\leq  C\big(\E^{-2}\|f\|_{\mathcal{E}_T}^2+\E^{-4}\|f\|_{\mathcal{E}_T}^4\big)e^{CT+CT\E^{-1}\|f_n^\E\|_{\mathcal{E}_T}+CT\E^{-3}\|f_n^\E\|_{\mathcal{E}_T}^3}.
\end{eqnarray*}
Putting together this estimate with \eqref{SSin1} we find that
$$
\|\partial_x \mathcal{N}(f)\|_{L^\infty_T X}\le C\Big(\|\partial_xf_0\|_{X}+T\E^{-2}\|f\|_{\mathcal{E}_T}^2+T\E^{-4}\|f\|_{\mathcal{E}_T}^4\Big)e^{CT+CT\E^{-1}\|f_n^\E\|_{\mathcal{E}_T}+CT\E^{-3}\|f_n^\E\|_{\mathcal{E}_T}^3}$$
which yields in view of \eqref{FirstX1}
$$
\| \mathcal{N}(f)\|_{\mathcal{E}_T}\le C\Big(\|f_0\|_{L^\infty}+\|\partial_xf_0\|_{X}+T\E^{-2}\|f\|_{\mathcal{E}_T}^2+T\E^{-4}\|f\|_{\mathcal{E}_T}^4\Big)e^{CT+CT\E^{-1}\|f_n^\E\|_{\mathcal{E}_T}+CT\E^{-3}\|f_n^\E\|_{\mathcal{E}_T}^3}.
$$
We can assume that $0<T\leq1$  and then 
$$
\| \mathcal{N}(f)\|_{\mathcal{E}_T}\le C\Big(\|f_0\|_{L^\infty}+\|\partial_xf_0\|_{X}+T\E^{-4}\|f\|_{\mathcal{E}_T}^4\Big)e^{CT\E^{-3}\|f_n^\E\|_{\mathcal{E}_T}^3}.
$$
Consider now the closed  ball 
$$
B=\Bigg\{f\in \mathcal{E}_T, \|f\|_{\mathcal{E}_T}\leq 2C\Big(\|f_0\|_{L^\infty}+\|\partial_xf_0\|_{X}\Big)e^{CT\E^{-3}\|f_n^\E\|_{\mathcal{E}_T}^3}\Bigg\},
$$
then if we choose $T$ such that 
\begin{equation}\label{Condd1}
16C^3\E^{-4}T\Big(\|f_0\|_{L^\infty}+\|\partial_xf_0\|_{X}\Big)^3e^{5CT\E^{-3}\|f_n^\E\|_{\mathcal{E}_T}^3}\leq1
\end{equation}
then $\mathcal{N}:B\to B$ is well-defined and proceeding as before we can show under this condition that it is also a contraction. This implies the existence in this ball of a unique solution   to the fixed point problem and so one can construct a solution  $f_{n+1}^\E\in \mathcal{E}_T$ to \eqref{scheme} and we have the estimates
$$
 \forall n\in\N,\,\|f_{n+1}^\E\|_{\mathcal{E}_T}\leq 2C\Big(\|f_0\|_{L^\infty}+\|\partial_xf_0\|_{X}\Big)e^{CT\E^{-3}\|f_n^\E\|_{\mathcal{E}_T}^3}.
 $$
Now we select $T$ such that it satisfies also
\begin{equation}\label{Condd2}
64C^4\Big(\|f_0\|_{L^\infty}+\|\partial_xf_0\|_{X}\Big)^3 T\E^{-3}\le\ln2
\end{equation}
then we get the uniform estimates
$$
\forall n\in\N, \quad \|f_n\|_{\mathcal{E}_T}\leq  4C\Big(\|f_0\|_{L^\infty}+\|\partial_xf_0\|_{X}\Big).
$$
In order to satisfy mutually the conditions \eqref{Condd1} and  \eqref{Condd2} it suffices to take 
\begin{equation}\label{T_E}
T_\E:=C_0\E^2
\end{equation}
with $C_0$ depending only on $\|f_0\|_{L^\infty}+\|\partial_xf_0\|_{X}$ such that 
\begin{equation}\label{uniformX1}
\forall n\in\N,\quad \|f_n\|_{\mathcal{E}_T}\leq  4C\Big(\|f_0\|_{L^\infty}+\|\partial_xf_0\|_{X}\Big).
\end{equation}
Now we shall check that we can remove the localization in space through the cut-off function $\chi$. To do so, it suffices to get suitable information on the support  of $(f_n^\E)$. We shall prove that 
\begin{equation}\label{suppz1}
\forall n\in \N,\quad \textnormal{supp }f_{n}^\EE(t)\subset K_0
\end{equation}
where $K_0$ is the convex hull of the support of $f_0$. Before giving the proof let us assume for a while this property and see  how to get rid of the localizations in the velocity fields. From the expression of $u_2^\EE(f_{n+1}^\E)$ one has
\begin{eqnarray*}
 u_2^{\E}(f_{n+1}^\E)(t,x)&=&\frac{1}{4\pi}\bigintsss_{|y|\geq\E}\tiny{\log\Bigg(\frac{y^2+(f_{n+1}^\E(t,x+y)-f_{n+1}^\E(t,x))^2}{y^2+(f_{n+1}^\E(t,x+y)+f_{n+1}^\E(t,x))^2} \Bigg)}dy\\
 &-&\frac{1}{4\pi}\bigintsss_{|y|\geq\E}\tiny{[1-\chi(y)] \log\Bigg(\frac{y^2+(f_{n+1}^\E(t,x+y)-f_{n+1}^\E(t,x))^2}{y^2+(f_{n+1}^\E(t,x+y)+f_{n+1}^\E(t,x))^2} \Bigg)}dy.
\end{eqnarray*} 
Since $\forall\, x\notin  K_0$ we have $f_{n+1}(t,x)=0$ and hence $u_2^\EE(f_{n+1}^\E)(t,x)=0$. Then $\textnormal{supp }u_2^\EE(f_{n+1}^\E)(t)\subset  K_0$. Thus $\forall\, x\in K_0$
\begin{eqnarray*}
&&\bigintsss_{|y|\geq\E}\tiny{[1-\chi(y)] \log\Bigg(\frac{y^2+(f_{n+1}^\E(t,x+y)-f_{n+1}^\E(t,x))^2}{y^2+(f_{n+1}^\E(t,x+y)+f_{n+1}^\E(t,x))^2} \Bigg)}dy=\\&&\bigintsss_{\{|y|\geq|\E\}\cap K_0-K_0}\tiny{[1-\chi(y)] \log\Bigg(\frac{y^2+(f_{n+1}^\E(t,x+y)-f_{n+1}^\E(t,x))^2}{y^2+(f_{n+1}^\E(t,x+y)+f_{n+1}^\E(t,x))^2} \Bigg)}dy=0
\end{eqnarray*} 
because $\chi=1$ on $K_0-K_0$. Now we claim that in the advection term  $ u_1^{\E}(f_{n+1}^\E)(t,x)\partial_x f_{n+1}^\EE$ of the equation \eqref{scheme} one can remove the cut-off function. Since $\partial_x f_{n+1}^\EE=0$ outside $K_0$ then one gets immediately  $\chi(x)\partial_x f_{n+1}^\EE=\partial_x f_{n+1}^\EE$. Similarly one has

\begin{eqnarray*}
\nonumber u_1^{\E}(g)(t,x)&\triangleq&\frac{1}{2\pi}\bigintsss_{|y|\geq\E}\tiny{\Bigg\{\arctan\Bigg(\frac{g(t,x+y)-g(t,x)}{y}\Bigg)+\arctan\Bigg(\frac{g(t,x+y)+g(t,x)}{y}\Bigg)\Bigg\}} dy\\
&-&\frac{1}{2\pi}\bigintsss_{|y|\geq\E}\tiny{{(1-\chi}(y))\Bigg\{\arctan\Bigg(\frac{g(t,x+y)-g(t,x)}{y}\Bigg)+\arctan\Bigg(\frac{g(t,x+y)+g(t,x)}{y}\Bigg)\Bigg\}} dy
\end{eqnarray*} 
and for  $x\in K_0$ it is clear that
\begin{eqnarray*}
&&\bigintsss_{|y|\geq\E}\tiny{{(1-\chi}(y))\Bigg\{\arctan\Bigg(\frac{g(t,x+y)-g(t,x)}{y}\Bigg)+\arctan\Bigg(\frac{g(t,x+y)+g(t,x)}{y}\Bigg)\Bigg\}} dy=\\
&&\bigintsss_{\{y|\geq\E\}\cap K_0-K_0}\tiny{{(1-\chi}(y))\Bigg\{\arctan\Bigg(\frac{g(t,x+y)-g(t,x)}{y}\Bigg)+\arctan\Bigg(\frac{g(t,x+y)+g(t,x)}{y}\Bigg)\Bigg\}} dy\\
&&\qquad\qquad = 0.
\end{eqnarray*} 
Now let us come back  to the proof of \eqref{suppz1} and provide further qualitative properties.
Now similarly to the identity \eqref{maximum} one obtains
$$
u_2^{n+1,\E}(t,x)=f_{n+1}^\E(t,x)\big(1+U_{n+1,\E}(t,x)\big),\quad  \|U_{n+1,\E}(t)\|_{L^\infty}\leq C(1+\|f_{n+1}(t)\|_{D}^6\big).
$$
So following the same line of the proof of  Proposition \ref{prop20} we get a similar formulae to \eqref{flot2} which implies that the positivity result,
$$
 f_{n+1}(t,x)\geq0
$$ 
where we have used in particular that the initial data $f_{n+1}^{\EE}(0,x)=f_0(x)\geq0$. Thus we obtain
$$
\forall n\in \N,\quad  f_{n}(t,x)\geq0.
$$ 
 As $u_{n,\EE}^2(t,x)\leq0$ then following the same proof of Proposition \ref{prop20} we get the maximum principle
$$
\forall \, n\in\N,\quad \|f_{n}^\EE(t)\|_{L^\infty}\leq \|f_0\|_{L^\infty}.
$$
The proof of the confinement of the support \eqref{suppz1} follows exactly  the same lines of the proof of \mbox{Proposition \ref{prop20}-$(3)$}. 
Now we shall study the strong convergence of the sequence $(f_n^\E)_n$. Set
$$
\theta_n^\E(t,x):=f_{n+1}(t,x)-f_n(t,x).
$$
Then
$$
\partial_t\theta_{n+1}^\E+u_1^{\E}(f_{n+1}^\E)\partial_x\theta_{n+1}^\E=-\big[u_1^{\E}(f_{n+1}^\E)-u_1^{\E}(f_{n}^\E)\big]\partial_xf_{n+1}^\E+u_2^{\E}(f_{n+2}^\E)-u_2^{\E}(f_{n+1}^\E)
$$
According to the mean value theorem one has for $a>0, x, y\in\R$
$$
|\arctan(x)-\arctan(y)|\leq |x-y|\quad\hbox{and}\quad |\log(a+|x|)-\log(a+|y|
)|\leq |x-y| a^{-1}$$
which imply that
\begin{eqnarray*}
\big\|u_1^{\E}(f_{n+1}^\E)(t)-u_1^{\E}(f_{n}^\E)(t)\big\|_{L^\infty}&\leq &C\|f_{n+1}^\E(t)-f_{n}^\E(t)\|_{L^\infty} \int_{|y|\geq\E}\frac{\chi(y)}{|y|}dy\\
&\leq&C\E^{-1}\|f_{n+1}^\E(t)-f_{n}^\E(t)\|_{L^\infty}.
\end{eqnarray*}
Similarly, we obtain
\begin{eqnarray*}
\big\|u_2^{\E}(f_{n+1}^\E)(t)-u_2^{\E}(f_{n}^\E)(t)\big\|_{L^\infty}&\leq &C\E^{-2}\big(\|f_{n+1}^\E(t)\|_{L^\infty}+\|f_{n}^\E(t)\|_{L^\infty}\big)\|f_{n+1}^\E(t)-f_{n}^\E(t)\|_{L^\infty}.
\end{eqnarray*}
Using the uniform estimates \eqref{uniformX1} we get for any $t\in[0,T_\E]$
\begin{eqnarray*}
\big\|u_2^{\E}(f_{n+1}^\E)(t)-u_2^{\E}(f_{n}^\E)(t)\big\|_{L^\infty}&\leq &C\|f_0^\prime\|_X\E^{-2}\|f_{n+1}^\E(t)-f_{n}^\E(t)\|_{L^\infty},\quad \|\partial_xf_{n+1}^\E(t)\|_{L^\infty}\leq C\|f_0^\prime\|_X.
\end{eqnarray*}
 Using the maximum principle for transport equation allows to get for any $t\in [0,T_\E]$
\begin{eqnarray*}
\big\|\theta_{n+1}(t)\big\|_{L^\infty}&\leq &C\E^{-2}\|f_0^\prime\|_X\|\int_0^t\Big[\big\|\theta_{n+1}(\tau)\big\|_{L^\infty}+\big\|\theta_{n}(\tau)\big\|_{L^\infty}\Big]d\tau.
\end{eqnarray*}
By virtue of Gronwall lemma  one finds that    for any $t\in [0,T_\E]$
\begin{eqnarray*}
\big\|\theta_{n+1}(t)\big\|_{L^\infty}&\leq &e^{C\E^{-2}\|f_0^\prime\|_X T_\E}\int_0^t\big\|\theta_{n}(\tau)\big\|_{L^\infty}d\tau.
\end{eqnarray*}
 Hence we obtain in view of \eqref{T_E}
 \begin{eqnarray*}
\big\|\theta_{n+1}(t)\big\|_{L^\infty}&\leq &C_0\int_0^t\big\|\theta_{n}(\tau)\big\|_{L^\infty}d\tau.
\end{eqnarray*}
By induction we find
$$
\forall n\in \N,\,\forall t\in [0,T_\E],\quad \big\|\theta_{n}\big\|_{L^\infty_{t}L^\infty}\leq C_0^n \frac{t^n}{n!}\|\theta_0\|_{L^\infty_{t}L^\infty}.
$$
This implies the convergence of the series
$$
\sum_{n\in\N}\big\|\theta_{n+1}\big\|_{L^\infty_{T_\E}L^\infty}<\infty.
$$
Therefore $(f_n^\E)_n$ converges strongly in $L^\infty_{T_\E}L^\infty$ to an element $f^\E\in L^\infty_{T_\E}L^\infty$. From the uniform estimates  \eqref{uniformX1}  we deduce that $f^\E\in \mathcal{E}_{T_\E}$. This allows to pass to the limit in the equation \eqref{scheme} and obtain that $f^\E$ is solution to 
  
  \begin{equation}\label{scheme01}
\left\lbrace
\begin{array}{l}
\partial_t f^\EE+u_1^{\E}(f^\E)\partial_x f^\E=u_2^{\E}(f^\E), \\
f_0^\E(t,x)=f_0(x)
\end{array}
\right.
\end{equation}
    with
  \begin{eqnarray}\label{fieldsX06}
\nonumber u_1^{\E}(f^\EE)(t,x)&\triangleq&\frac{1}{2\pi}\bigintsss_{|y|\geq\E}\tiny{\Bigg\{\arctan\Bigg(\frac{f^\EE(t,x+y)-f^\EE(t,x)}{y}\Bigg)+\arctan\Bigg(\frac{f^\EE(t,x+y)+f^\EE(t,x)}{y}\Bigg)\Bigg\}} dy\\
 && u_2^{\E}(f^\EE)(t,x)\triangleq\frac{1}{4\pi}\bigintsss_{|y|\geq\E}\tiny{\log\Bigg(\frac{y^2+(f^\EE(t,x+y)-f^\EE(t,x))^2}{y^2+(f^\EE(t,x+y)+f^\EE(t,x))^2} \Bigg)}dy.
\end{eqnarray} 
Now, looking to  the proofs used to get  the a priori estimates, they  can be adapted to the equation \eqref{scheme01} supplemented with \eqref{fieldsX06}. For instance  the a priori estimates obtained in Proposition \ref{prop20} hold for the modified equation  \eqref{scheme01} independently on vanishing $\E$. In particular  one can bound uniformly in $\E$ the solution $f^\E$ in the space $X_{T_\E}$ and therefore $T_\E$ is not maximal and by a standard bootstrap argument we can continue the solution  up to the local  time $T$  constructed  in \mbox{Proposition \ref{prop20}.} It follows that $f^\E$ belongs to $ \mathcal{E}_T$ uniformly with respect to small $\E$. This  yields according once again to the Proposition \ref{prop20} and the inequalities \eqref{maximum} and \eqref{zerda}
\begin{eqnarray*}
\sup_{\varepsilon\in [0,1]}\|\partial_t f^\E\|_{L^\infty_TL^\infty}&\le&\|u_1^{\E}(f^\E)\|_{L^\infty_TL^\infty}\|\partial_x f^\E\|_{L^\infty_TL^\infty}+\|u_2^{\E}(f^\E)\|_{L^\infty_TL^\infty}\\
&\leq&C_0,
\end{eqnarray*}
and $C_0$ is a constant depending on the size of the initial data.
Now from the compact \mbox{embedding $C_K^s\to C_b$} and Ascoli lemma we deduce that up to a sequence $(f^\E)$ converges strongly  in $L^\infty_TL^\infty$ to some element $f$ which belongs in turn to $\mathcal{E}_T$. This allows to pass to the limit in \eqref{scheme01} and \eqref{fieldsX06} and find a solution to the initial value problem \eqref{scheme}. We point out that by working more one may  obtain the strong convergence of the full sequence $(f^\E)$ to $f$. Note finally that the uniqueness follows easily  from    the arguments used to prove that $(\theta_n)$ is a Cauchy sequence.

  \section{Global well-posedness}
We are concerned here with the global existence of  strong  solutions already constructed in Theorem \ref{thm1}.  This will be established under a smallness condition on the initial data and it is probable  that for arbitrary large initial data the graph structure might be destroyed   in finite time. The basic ingredient  which allows to balance the energy amplification during the time evolution  is a damping effect generated by the source terms. Note that this damping effect  is plausible  from  the graph equation  \eqref{graph1} according to the identity  \eqref{VY1}. However, as we shall see in the next section, it is quite complicate  to extend  this behavior for higher regularity  at the level of the resolution space due to the existence of linear part  in  the source term governing the motion of the slope \eqref{graphder}. This part could in general  bring  an amplification  in time of  the energy. To circumvent this difficulty we establish a weakly dissipative property of the linearized operator associated to  the source term that we combine with  the time decay of the  solution for weak regularity using an interpolation argument. 
  \subsection{Weak and strong damping   behavior of the source term} 
  Note from Proposition \ref{prop10} that $F$ does not contribute at the linear level which is not the case of  the functional  $G$. We shall prove that actually there is no linear contribution for $G$. This will be done by establishing a  dissipative property that  occurs at least at the linear level.  This is described by the following proposition.
  \begin{proposition}\label{prop30}
   Let $K$ be a compact set of $\R$ and $s\in (0,1)$. Then for any $f\in C^s_K$ we have the decomposition 
   
   $$
   G(x)=2\pi f^\prime(x)+ {L}(x)+{N}(x)
   $$
   with
 $$
\|{L}\|_s\leq2\pi\big( \|f^\prime\|_s+2\|f^\prime\|_{L^\infty}\big)+C\|f^\prime\|_{L^\infty}^s\|f^\prime\|_s\quad \hbox{and}\quad  \|N\|_{s}\le C \|f^\prime\|_{D}^{\frac13}\Big(\|f^\prime\|_s+\|f^\prime\|_s^{16}\Big),
 $$
 where $C>0$ is a constant depending only on $K$. Moreover,
 $$
 \|L\|_{L^\infty}\le C\min\big(\|f\|_{L^\infty}^s\|f^\prime\|_s,\|f^\prime\|_{L^\infty}\big)\quad \hbox{and}\quad \|N\|_{L^\infty}\leq C\|f^\prime\|_{L^\infty}\Big(\|f^\prime\|_{D}+\|f^\prime\|_{D}^3\Big).
 $$
 \end{proposition}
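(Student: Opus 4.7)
The strategy is to isolate the leading linear piece $2\pi f'(x)$ from the $G_{11}$ term in the decomposition $G = G_1 + G_2$ of Proposition \ref{prop10}, and to split the remainder into a near-linear part $L$ and a strictly nonlinear part $N$. The spirit parallels the derivation of \eqref{VY1} where we found $u_2 = -f(1+R)$: morally, differentiating the $-f$ leading term produces the $2\pi f'$ contribution to $G$. The starting point is the representation for $G_{11}$ established in the proof of Proposition \ref{prop10} via the change of variables $y = f(x) z$:
\begin{equation*}
G_{11}(x) = 2\int_\R \frac{f'(x) + f'(x + f(x) z)}{\varphi(x,z)}\, dz, \qquad \varphi(x,z) = z^2 + \Bigl(2 + z\!\int_0^1\! f'(x+\theta f(x) z)\, d\theta\Bigr)^2,
\end{equation*}
which remains valid for $f(x) = 0$ since both sides then vanish (by positivity of $f$ and Lemma \ref{lem1}). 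Setting $f' \equiv 0$ inside the integrand yields the target $2 f'(x) \int_\R \frac{2}{z^2+4}\, dz = 2\pi f'(x)$, so I would write $G_{11}(x) - 2\pi f'(x) = A(x) + B(x)$ with
\begin{equation*}
A(x) = 2 \int_\R \frac{f'(x+f(x) z) - f'(x)}{\varphi(x,z)} dz, \qquad B(x) = 4 f'(x) \int_\R \Bigl( \frac{1}{\varphi(x,z)} - \frac{1}{z^2+4} \Bigr) dz.
\end{equation*}

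For $A$, the Hölder bound $|f'(x+f(x)z) - f'(x)| \le \|f'\|_s |f(x)|^s |z|^s$ combined with the uniform estimate $1/\varphi \le C(1+\|f'\|_{L^\infty}^2)/(1+z^2)$ from \eqref{Eqw1} gives $\|A\|_{L^\infty} \le C \|f\|_{L^\infty}^s \|f'\|_s$, while the trivial $|f'(x+f(x)z) - f'(x)| \le 2\|f'\|_{L^\infty}$ yields $\|A\|_{L^\infty} \le C\|f'\|_{L^\infty}$; both halves of the $\min$ in the $\|L\|_{L^\infty}$ bound are thus covered. The $C^s$ bound $\|A\|_s \le C \|f'\|_{L^\infty}^s \|f'\|_s$ follows by applying the composition law \eqref{comp1} and law product \eqref{lawX3} to the shifts $\operatorname{Id} + \theta f$, in the spirit of the chain \eqref{Dqs1}--\eqref{dezz1}, and supplies the $C\|f'\|_{L^\infty}^s\|f'\|_s$ term of the claimed $L$-estimate. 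For $B$, the expansion $\varphi - (z^2+4) = 4z I(x,z) + z^2 I(x,z)^2$ with $I(x,z) = \int_0^1 f'(x + \theta f(x) z)\, d\theta$ yields $|\varphi - (z^2+4)| \le C(|z|\|f'\|_{L^\infty} + z^2 \|f'\|_{L^\infty}^2)$, so the integral is of order $\|f'\|_{L^\infty}$ and $B(x) = f'(x) \cdot O(\|f'\|_{L^\infty})$, which fits the nonlinear profile required for $N$.

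The remaining pieces $G_{2,1}, G_{2,2}^1, G_{2,2}^2$ are directly nonlinear by the estimates \eqref{Tata13}, \eqref{Tata14}, \eqref{Tata17} of Proposition \ref{prop10} and go into $N$. The arctan residual $-2 f'(x) \arctan(f(x))$ generates the exact-coefficient contribution to $L$: by the Hölder law product,
\begin{equation*}
\|f'\arctan(f)\|_s \le \|\arctan(f)\|_{L^\infty} \|f'\|_s + \|f'\|_{L^\infty}\|\arctan(f)\|_s,
\end{equation*}
and since $\|\arctan(f)\|_{L^\infty} \le \pi/2$ together with the interpolation $\omega_{\arctan f}(r) \le \min(r\|f'\|_{L^\infty}, \pi)$ giving $\|\arctan f\|_s \le \pi^{1-s}\|f'\|_{L^\infty}^s \le \pi$, the claimed $2\pi(\|f'\|_s + 2\|f'\|_{L^\infty})$ piece of the $L$ bound is reached. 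Setting $L = A - 2 f'(x) \arctan(f(x))$ and $N = B + G_{2,1} + G_{2,2}^1 + G_{2,2}^2$ produces the desired decomposition, and the $L^\infty$ bounds $\|N\|_{L^\infty} \le C\|f'\|_{L^\infty}(\|f'\|_D + \|f'\|_D^3)$ follow by gathering the $L^\infty$ estimates already established in the proof of Proposition \ref{prop10}.

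The main obstacle I anticipate is matching the \emph{precise} coefficient $2\pi$ in the $\|L\|_s$ bound, which forces careful accounting of constants both in the leading-order extraction from $G_{11}$ and in the arctan residual. A secondary subtlety concerns the $\|L\|_{L^\infty}$ estimate involving $\min(\|f\|_{L^\infty}^s\|f'\|_s, \|f'\|_{L^\infty})$: while $A$ supplies both bounds by construction, the arctan residual naturally gives only $\|f'\|_{L^\infty}$, so to recover the $\|f\|_{L^\infty}^s\|f'\|_s$ branch one must promote the pointwise bound via Lemma \ref{lem1}'s inequality $|f'(x)| \le C\|f'\|_s^{1/(1+s)}|f(x)|^{s/(1+s)}$ combined with the $L^\infty$ bound $|\arctan(f(x))| \le C|f(x)|$. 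Aside from these bookkeeping issues, the proof is a reorganization of material already developed in Propositions \ref{prop10} and \ref{prop20}.
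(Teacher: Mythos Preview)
Your strategy is essentially right, but there is a genuine gap in the $C^s$ estimate for $A$, and the bookkeeping does not close. You claim $\|A\|_s \le C\|f'\|_{L^\infty}^s \|f'\|_s$, which is false: for fixed $z$ the map $h_z(x) = f'(x+f(x)z)-f'(x)$ has $C^s$ seminorm of order $\|f'\|_s$, since the composition law \eqref{comp1} gives $\|f'\circ(\mathrm{Id}+zf)\|_s \ge \|f'\|_s(1-o(1))$ and there is no cancellation with $-f'$ at the seminorm level. The $L^\infty$ smallness $|h_z(x)| \le \|f'\|_s|f(x)|^s|z|^s$ does not transfer to the H\"older seminorm, because the latter is insensitive to the size of the shift $f(x)z$. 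Consequently $\|A\|_s$ carries a linear-in-$\|f'\|_s$ term whose constant is governed by $\int_\R \|1/\varphi(\cdot,z)\|_{L^\infty}\,dz$; since $\varphi$ depends on $x$ this constant is not exactly $\pi$, and you cannot extract the precise $2\pi$ required downstream (the coefficient $1$ in front of $\|f'\|_s$ in \eqref{Tham72} is critical for the Gronwall argument of Proposition~\ref{prop-glob}).

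The paper avoids this by \emph{freezing} the denominator: it sets
\[
L(x)=2\int_\R\frac{f'(x+f(x)z)-f'(x)}{z^2+4}\,dz,
\]
so the only $x$-dependence sits in the numerator and the exact value $\int_\R dz/(z^2+4)=\pi/2$ delivers $\|L\|_s \le 2\pi(\|f'\|_s+2\|f'\|_{L^\infty})+C\|f'\|_{L^\infty}^s\|f'\|_s$ directly. The remainder $\mathcal N=-2\int_\R[f'(x)+f'(x+f(x)z)]\,\psi(x,z)\,[\varphi(x,z)(z^2+4)]^{-1}dz$, which algebraically equals your $B+(A-L)$, is then genuinely nonlinear and goes into $N$ together with $H=G_{12}+G_2$. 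There is also a bookkeeping error in your decomposition: the three $f'\arctan f$ contributions ($-4f'\arctan f$ from $G_{12}$ and two copies of $+2f'\arctan f$ arising in the $G_2$ splitting) cancel exactly, so your proposed $2\pi f'+L+N$ equals $G-2f'\arctan f\neq G$. The $\arctan$ residual therefore plays no role in $L$; the constant $2\pi(\|f'\|_s+2\|f'\|_{L^\infty})$ comes entirely from the integral with frozen denominator $z^2+4$.
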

 \begin{proof}
 In view of \eqref{daca1},\eqref{daca2}, \eqref{dgh1}, \eqref{Tata13}, \eqref{Tata14} and \eqref{Tata17} one gets
 $$
 G(x)=G_{11}(x)+H(x), \quad H=G_{12}+G_2
 $$
 with 
 \begin{equation}\label{H890}
 \|H\|_s\leq C\|f^\prime\|_{D}^{\frac13}\Big(\|f^\prime\|_s+\|f^\prime\|_s^{16}\Big).
 \end{equation}
 Note also that from \eqref{kar1} and \eqref{kar8} we get
 \begin{equation}\label{H870}
 \|H\|_{L^\infty}\leq C\|f^\prime\|_{L^\infty}\Big(\|f^\prime\|_s+\|f^\prime\|_s^{3}\Big).
 \end{equation}
 Now from \eqref{Expbb} we get
 \begin{equation*}
         \nonumber G_{11}(x)=2\int_{\R}\frac{f^\prime(x)+f^\prime(x+f(x)z) }{   \varphi(x,z)} dz  
           \end{equation*}  
with 
       $$
       \varphi(x,z)=z^2+\bigg(2+z \int_0^1f^\prime\big(x+\theta f(x) z\big)d\theta\bigg)^2.
       $$
We shall split again $G_{11}$ as follows
 \begin{eqnarray*}
G_{11}(x)&=&2\int_{\R}\frac{f^\prime(x)+f^\prime\big(x+f(x)z\big) }{  z^2+4} dz\\
&-&2\int_{\R}\frac{\big[f^\prime(x)+f^\prime\big(x+f(x)z\big)\big]\psi(x,z)}{ \varphi(x,z)( z^2+4)} dz\\
&\triangleq&\mathcal{L}(x)+\mathcal{N}(x),
 \end{eqnarray*}
 with
 $$
 \psi(x,z)\triangleq 4z \int_0^1f^\prime\big(x+\theta f(x) z\big)d\theta+z^2\Big( \int_0^1f^\prime\big(x+\theta f(x) z\big)d\theta\Big)^2.
 $$
 From  \eqref{Eqw1} one gets
 \begin{equation}\label{kar5}
 \|\mathcal{N}\|_{L^\infty}\leq C\|f^\prime\|_{L^\infty}^2\big(1+\|f^\prime\|_{L^\infty}^3\big).
 \end{equation}
 Using  the  law product \eqref{lawX3} we get
  \begin{eqnarray*}
\Bigg\|\frac{\big[f^\prime+f^\prime\circ\big(\textnormal{Id}+zf\big)\big]\psi(\cdot,z)}{ \varphi(\cdot,z)}\Bigg\|_s &\le& 2\|f^\prime\|_{L^\infty}\|\psi(\cdot,z)\|_{L^\infty}\big\|1/\varphi(\cdot,z)\big\|_s\\
&+&2\|f^\prime\|_{L^\infty}\|\psi(\cdot,z)\|_s\|1/\varphi(\cdot,z)\|_{L^\infty}\\
&+&\big[\|f^\prime\|_s+\|f^\prime\circ\big(\textnormal{Id}+zf\big)\|_s\big] \|\psi(\cdot,z)\|_{L^\infty} \|1/\varphi(\cdot,z)\|_{L^\infty}.
\end{eqnarray*}
In addition, it is clear that
$$
 \|\psi(\cdot,z)\|_{L^\infty} \le 4|z|\|f^\prime\|_{L^\infty}+|z|^2\|f^\prime\|_{L^\infty}^2.
$$
Performing the composition law  \eqref{comp1} we deduce that
\begin{eqnarray*}
\|\psi(\cdot,z)\|_s&\le& C|z|\|f^\prime\|_s\big[1+|z|^s\|f^\prime\|_{L^\infty}^s\big]+C|z|^2\|f^\prime\|_{L^\infty}\|f^\prime\|_s\big[1+|z|^s\|f^\prime\|_{L^\infty}^s\big].
\end{eqnarray*}
Combining this latter estimate  with \eqref{dezz1} and \eqref{Eqw1} yields

\begin{eqnarray*}
\bigg\|\frac{\big[f^\prime+f^\prime\circ\big(\textnormal{Id}+zf\big)\big]\psi(\cdot,z)}{ \varphi(\cdot,z)}\bigg\|_s \le  C\|f^\prime\|_{L^\infty}\|f^\prime\|_s\big[1+\|f^\prime\|_{L^\infty}^{7+s}\big]\big(1+|z|^s\big).
\end{eqnarray*}
Hence we get according to the embedding $C^s_K\hookrightarrow L^\infty$
\begin{eqnarray*}
\big\|\mathcal{N}\big\|_s& \le&  C\|f^\prime\|_{L^\infty}\|f^\prime\|_s\big[1+\|f^\prime\|_{L^\infty}^{7+s}\big]\\
&\leq&C\|f^\prime\|_{L^\infty}^{\frac13}\big[\|f^\prime\|_s^{\frac53}+\|f^\prime\|_{s}^{\frac{26}{3}+s}\big]\\
&\leq& C\|f^\prime\|_{L^\infty}^{\frac13}\big[\|f^\prime\|_s+\|f^\prime\|_{s}^{10}\big].
\end{eqnarray*}
Setting $N=\mathcal{N}+H$ and combining the latter estimate with \eqref{H890} we find the desired estimate for $N$ stated in the proposition. Putting together  \eqref{H870} and \eqref{kar5} combined with Sobolev embedding we find
$$
\|N\|_{L^\infty}\leq C\|f^\prime\|_{L^\infty}\Big(\|f^\prime\|_{s}+\|f^\prime\|_{s}^4\Big).
$$

Coming back to $\mathcal{L}$ one may write
  \begin{eqnarray}\label{L67}
\nonumber\mathcal{L}(x)&=&4f^\prime(x)\bigintsss_{\R}\frac{1 }{  z^2+4} dz+2\bigintsss_{\R}\frac{f^\prime\big(x+f(x)z\big)-f^\prime(x) }{  z^2+4} dz\\
&\triangleq&2\pi f^\prime(x)+L(x).
\end{eqnarray}
To estimate $L$ in $C^s$ we simply write
\begin{eqnarray*}
\|L\|_s&\le& 2\bigintsss_{\R}\frac{\|f^\prime\circ\big(\textnormal{Id}+zf\big)\|_s+\|f^\prime\|_s }{  z^2+4} dz.
\end{eqnarray*}
Combined with \eqref{comp1} we find
\begin{eqnarray*}
\|f^\prime\circ\big(\textnormal{Id}+zf\big)\|_s&\le&\big(\|f^\prime\|_s+2\|f^\prime\|_{L^\infty}\big)\big(1+|z|\|f^\prime\|_{L^\infty}\big)^s\\
&\leq& \big(\|f^\prime\|_s+2\|f^\prime\|_{L^\infty}\big)\big(1+|z|^s\|f^\prime\|_{L^\infty}^s\big),
\end{eqnarray*}
where in the last line we have use the inequality: $\forall s\in (0,1), \forall x, y\geq0$ one has

$$
(x+y)^s\leq x^s+y^s.
$$
Using \eqref{Imbed1}, it follows that
\begin{eqnarray*}
\|L\|_s&\le&2\pi\big(\|f^\prime\|_s +2\|f^\prime\|_{L^\infty}\big)+ C\|f^\prime\|_s\|f^\prime\|_{L^\infty}^s.
\end{eqnarray*}
The estimate of  $L$ in $L^\infty$ is easier and one gets according to \eqref{L67},
\begin{eqnarray*}
|L(x)|&\le&2|f(x)|^s\|f^\prime\|_s\int_{\R}\frac{|z|^s }{  z^2+4} dz\\
&\le&C|f(x)|^s\|f^\prime\|_s.
\end{eqnarray*}
Therefore we obtain
\begin{equation*}
\|L\|_{L^\infty}\le C\|f\|_{L^\infty}^s\|f^\prime\|_s.
\end{equation*}
We point out that we have obviously 
$$
\|L\|_{L^\infty}\le 2\pi\|f^\prime\|_{L^\infty}.
$$
Therefore we find 
\begin{equation}\label{Tham761}
\|L\|_{L^\infty}\le C\min\big(\|f\|_{L^\infty}^s\|f^\prime\|_s,\|f^\prime\|_{L^\infty}\big).
\end{equation}
This achieved the proof of Proposition \ref{prop30}.
 \end{proof}
\subsection{Global a priori estimates}

The main goal of this section is to show how we may use the weakly damping effect of the source terms  stated in Proposition \ref{prop30} in order to get global a priori estimates when the initial data  is small enough. The basic result reads as follows.
\begin{proposition}\label{prop-glob}
Let $K$ be a compact set of $\R$ and $s\in (0,1)$. There exists a constant $\varepsilon>0$ such that
 if $\|f^\prime_0\|_s\leq \varepsilon$ then the equation \eqref{graph1} admits a unique  global solution
 $$
 f^\prime\in L^\infty(\R_+; C^s_K).
 $$
 Moreover, there exists a constant $C_0$ depending on the initial data such that
 $$
 \forall\, t\geq0,\quad \| f^\prime(t)\|_{L^\infty}\leq C_0e^{-t}.
 $$
\end{proposition}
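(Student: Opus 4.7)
The plan is to extend the local solution provided by Theorem \ref{thm1} globally via a bootstrap argument. Set $\varepsilon=\|f_0'\|_s$ and let $T^\star$ be the maximal time on which
\[
\|f'(t)\|_s \leq 2\varepsilon \quad\text{and}\quad e^{t}\|f'(t)\|_{L^\infty} \leq C_0
\]
hold, with $C_0$ large depending only on the initial data. The task is to improve both inequalities strictly on $[0,T^\star]$ so that the local existence result forces $T^\star=+\infty$. The key structural input is Proposition \ref{prop30}, whose decomposition $G = 2\pi f' + L + N$ recasts the slope equation \eqref{graphder} as the damped transport equation
\[
(\partial_t + u_1\partial_x + 1)g = \tfrac{1}{2\pi}(F - L - N), \qquad g = \partial_x f,
\]
with damping rate $1$ exactly matching the announced exponential decay.

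For the $L^\infty$ step I would integrate along the characteristics $\psi$ of $u_1$. Their Lipschitz constant stays uniformly bounded, because \eqref{zerda} together with the interpolation $\|f'\|_D \lesssim \|f'\|_{L^\infty}^{1/2}\|f'\|_s^{1/2}$ gives $\int_0^t\|\partial_x u_1\|_{L^\infty}d\tau<+\infty$ under the bootstrap. Duhamel then yields
\[
g(t,\psi(t,x)) = e^{-t}g_0(x) + \tfrac{1}{2\pi}\int_0^t e^{-(t-\tau)}(F-L-N)(\tau,\psi(\tau,x))\,d\tau.
\]
The $L^\infty$ bounds of Propositions \ref{prop10} and \ref{prop30}, together with the sharp pointwise decay $\|f(t)\|_{L^\infty}\leq C\|f_0\|_{L^\infty}e^{-t}$ obtained by applying the identity \eqref{VY1} along characteristics (once $\|R(t)\|_{L^\infty}$ is shown to be exponentially integrable in $t$), reduce each of $\|F\|_{L^\infty}$, $\|L\|_{L^\infty}$, $\|N\|_{L^\infty}$ to a product of powers of $\|f'\|_{L^\infty}$, $\|f'\|_s$ and $\|f\|_{L^\infty}$. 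The smallness of $\varepsilon$ then produces an overall prefactor $\varepsilon^{1+\eta}$ in the Duhamel integral, which strictly improves the bootstrap bound on $e^{t}\|f'(t)\|_{L^\infty}$.

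For the $C^s$ step the weak damping does not reach the resolution scale (as noted in the remark following Theorem \ref{thm1}), so I would fall back on the classical Hölder transport estimate \eqref{Tham1}
\[
\|g(t)\|_s \leq C e^{V(t)}\Big[\|g_0\|_s + \int_0^t \|F-G\|_s(\tau)\,d\tau\Big], \qquad V(t)=\int_0^t\|\partial_x u_1\|_{L^\infty}d\tau,
\]
combined with Proposition \ref{prop10}, which provides $\|F-G\|_s \leq C(1+\|f'\|_D^{1/3})(\|f'\|_s+\|f'\|_s^{16})$. Every factor besides the leading $\|f'\|_s$ inherits a decaying prefactor from the $L^\infty$ bound of Step~1 via interpolation, so the time integral is dominated by $C\varepsilon^{1+\eta'}$, strictly improving $\|f'(t)\|_s\leq 2\varepsilon$ when $\varepsilon$ is small. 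Uniqueness then follows verbatim from the contraction estimate of Section \ref{Const778}.

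The main obstacle is the $L$-term of Proposition \ref{prop30}: the estimate $\|L\|_{L^\infty}\lesssim \|f\|_{L^\infty}^s\|f'\|_s$ decays only at rate $e^{-s\tau}$, which is strictly slower than the damping rate for $s<1$. Fitting this into a pure $e^{-t}$ bootstrap is the delicate point and requires using the sharp pointwise decay $\|f(t)\|_{L^\infty}\lesssim e^{-t}$ coming from \eqref{VY1} (rather than the weaker $e^{-t/2}$ obtainable from $L^1$-interpolation) together with the bootstrap smallness $\|f'\|_s\lesssim\varepsilon$, so that the Duhamel convolution contributes a prefactor of size $\varepsilon^{1+s}$ which is enough to absorb the slow exponential within the bootstrap constants.
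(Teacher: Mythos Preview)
Your $C^s$ step does not close. In the undamped estimate \eqref{Tham1} the source is $F-G$, and by Proposition~\ref{prop10} the bound on $\|G\|_s$ has a genuinely linear leading term $C\|f'\|_s$ carrying \emph{no} decaying prefactor (the factor $1+\|f'\|_D^{1/3}$ contributes a constant $1$). Under your bootstrap $\|f'\|_s\le 2\varepsilon$ this gives $\int_0^t\|F-G\|_s\,d\tau\ge c\,\varepsilon\,t$, which grows without bound, so the claim that the time integral is $O(\varepsilon^{1+\eta'})$ is false. You have also inverted the remark after Theorem~\ref{thm1}: the damping \emph{is} available in the sub-critical (H\"older) case, and the paper uses the damped equation \eqref{kar17} for the $C^s$ estimate as well. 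There the linear part $\|L\|_s\le 2\pi(\|f'\|_s+2\|f'\|_{L^\infty})+\ldots$ is absorbed by a Gronwall inequality applied to $K(t)=e^{-V(t)}e^{t}\|f'(t)\|_s$, leaving only terms controlled by $\|f'(\tau)\|_{L^\infty}$; these are handled through the interpolation $\|f'\|_{L^\infty}\lesssim\|f\|_{L^1}^{s/(2+s)}\|f'\|_s^{2/(2+s)}$ and the exact decay $\|f(t)\|_{L^1}=e^{-t}\|f_0\|_{L^1}$, yielding the integrable rate $e^{-\frac{s}{2+s}t}$ without any bootstrap assumption on $\|f'\|_{L^\infty}$.

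Your simultaneous bootstrap of $e^{t}\|f'(t)\|_{L^\infty}\le C_0$ also fails for the reason you yourself identify: $\|L\|_{L^\infty}\lesssim\|f\|_{L^\infty}^s\|f'\|_s\lesssim \varepsilon\,e^{-st}$, and the Duhamel convolution $\int_0^t e^{-(t-\tau)}e^{-s\tau}\,d\tau\sim e^{-st}/(1-s)$ forces $e^{t}\|f'(t)\|_{L^\infty}$ to pick up a term of order $\varepsilon\,e^{(1-s)t}$. A small prefactor $\varepsilon^{1+s}$ cannot compensate exponential growth, so the bootstrap cannot be improved. The paper avoids this by \emph{not} assuming the sharp $e^{-t}$ decay during the bootstrap: it first closes the $C^s$ bound alone (via the mechanism above), and only \emph{afterwards} upgrades to $\|f'(t)\|_{L^\infty}\le C_0e^{-t}$ through a Gronwall argument on \eqref{tun44}, using that $\|F\|_{L^\infty}+\|N\|_{L^\infty}\lesssim\|f'\|_{L^\infty}\,\|f'\|_D$ with $\|f'(\tau)\|_D$ already known to be time-integrable.
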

\begin{proof}
According to  the decomposition of Proposition \ref{prop30} combined with the equation \eqref{graphder} and \eqref{Ham11} we get that $g=\partial_xf$ satisfies the equation
\begin{equation}\label{kar17}
\partial_tg(t,x)+u_1(t,x)\partial_1g(t,x)+g(t,x)=\mathcal{R}(t,x),\quad \mathcal{R}\triangleq\frac{1}{2\pi}(F-L-N).
\end{equation}
Using  Proposition \ref{prop10} and Proposition \ref{prop30}  combined with the \eqref{Imbed1} we find
\begin{eqnarray*}
\|\mathcal{R}\|_s&\le&  \|f^\prime\|_s+2\|f^\prime\|_{L^\infty}+C\|f^\prime\|_D\big(\|f^\prime\|_s+\|f^\prime\|_s^3\big)\\
&+&C\|f^\prime\|_{L^\infty}^s\|f^\prime\|_s+C \|f^\prime\|_{L^\infty}^{\frac13}\Big(\|f^\prime\|_s+\|f^\prime\|_s^{16}\Big).
\end{eqnarray*}
The embedding $C^{\frac{s}{2}}_K\subset C^\star_K$ combined with interpolation inequalities in H\"{o}lder spaces  yield
\begin{equation}\label{zerda2}
\|f^\prime\|_D\leq C\|f^\prime\|_{L^\infty}^{\frac12}\|f^\prime\|_{s}^{\frac12}.
\end{equation}
Set $s_0=\min(s,\frac13)$ then it is easy to get
\begin{equation}\label{Tham72}
\|\mathcal{R}\|_s\le \|f^\prime\|_s+2\|f^\prime\|_{L^\infty}+C \|f^\prime\|_{L^\infty}^{s_0}\Big(\|f^\prime\|_s+\|f^\prime\|_s^{16}\Big).
\end{equation}
Let $h(t,x)\triangleq g(t,\psi(t,x))$, where $\psi$ is the flow introduced in \eqref{flot1}. Then it is obvious that
$$
\partial_t h(t,x)+h(t,x)=\mathcal{R}(t,\psi(t,x)).
$$
This allows to deduce the following  Duhamel integral representation 
$$
e^tg(t,x)=g_0(\psi^{-1}(t,x))+\int_0^{t}e^\tau \mathcal{R}\big(\tau,\psi\big(\tau,\psi^{-1}(t,x)\big)\big) d\tau.
$$
Thus
$$
e^t\|g(t)\|_s\le \|g_0(\psi^{-1}(t))\|_s+\int_0^te^\tau\|\mathcal{R}\big(\tau,\psi\big(\tau,\psi^{-1}(t)\big)\big)\|_s d\tau.
$$
According to  \eqref{Tham01} and \eqref{comp1} we obtain
$$
 \|g_0(\psi^{-1}(t))\|_s\leq C\|g_0\|_s e^{V(t)}, V(t)=\int_0^{t}\|\partial_x u_1(\tau)\|_{L^\infty} d\tau
$$
and
\begin{eqnarray*}
\|\mathcal{R}\big(\tau,\psi\big(\tau,\psi^{-1}(t)\big)\big)\|_s&\le&\big[\|\mathcal{R}(\tau)\|_{s}+2\|\mathcal{R}(\tau)\|_{L^\infty}\big] e^{V(t)-V(\tau)}.
\end{eqnarray*}
Note that the estimate of $\mathcal{R}$ in $C^s$ has been already stated in  \eqref{Tham72}. However to get a suitable estimate in $L^\infty$ we use  Proposition \ref{prop10} and  Proposition \ref{prop30} combined with Sobolev  embedding,
\begin{eqnarray}\label{tun33}
\nonumber\|\mathcal{R}(t)\|_{L^\infty}&\leq& C\|f^\prime(t)\|_{L^\infty}\Big(\|f^\prime(t)\|_{D}+\|f^\prime(t)\|_{D}^3\Big)+C\min\big(\|f(t)\|_{L^\infty}^s\|f^\prime(t)\|_s,\|f^\prime(t)\|_{L^\infty}\big)\\
&\leq& C\big(\|f^\prime(t)\|_{L^\infty}+\|f^\prime(t) \|_{L^\infty}^s\big)\Big(\|f^\prime(t)\|_{s}+\|f^\prime(t)\|_{s}^3\Big)\\
\nonumber&\leq& C\|f^\prime(t)\|_{L^\infty}^{s_0}\Big(\|f^\prime(t)\|_{s}+\|f^\prime(t)\|_{s}^4\Big).
\end{eqnarray}
It follows that
\begin{eqnarray*}
\|\mathcal{R}\big(\tau,\psi\big(\tau,\psi^{-1}(t)\big)\big)\|_s&\le& \big(\|f^\prime(\tau)\|_s+2\|f^\prime(\tau)\|_{L^\infty}\big)e^{V(t)-V(\tau)}\\
&+&C\|f^\prime(\tau)\|_{L^\infty}^{s_0}\big(\|f^\prime(\tau)\|_s+\|f^\prime(\tau)\|_s^{16}\big)e^{V(t)-V(\tau)}.
\end{eqnarray*}
Set 
$
K(t)= e^{-V(t)} e^{t}\|f^\prime(t)\|_s$ and
$$
 S(t)=Ce^{t} e^{-V(t)}\Big[\|f^\prime(t)\|_{L^\infty}+\|f^\prime(t)\|_{L^\infty}^{s_0}\big(\|f^\prime(t)\|_s+\|f^\prime(t)\|_s^{16}\big)\Big]
$$
then
$$
K(t)\le C K(0)+\int_0^t K(\tau)d\tau+\int_0^t S(\tau) d\tau.
$$
By virtue of  Gronwall lemma we deduce that
$$
K(t)\leq Ce^{t}K(0)+\int_0^t e^{t-\tau}S(\tau) d\tau.
$$
This implies that
\begin{eqnarray}\label{JunX1}
\nonumber\|f^\prime(t)\|_s&\le&Ce^{V(t)}\|f^\prime_0\|_s+C e^{V(t)}\int_0^t\|f^\prime(\tau)\|_{L^\infty} d\tau\\
&+&e^{V(t)}\int_0^t \|f^\prime(\tau)\|_{L^\infty}^{s_0}\Big(\|f^\prime(\tau)\|_s+\|f^\prime(\tau)\|_s^{16}\Big)d\tau.
\end{eqnarray}
Combining  the following interpolation inequality
$$
\|f^\prime_t\|_{L^\infty}\leq C\|f_t\|_{L^1}^{\frac{s}{2+s}}\|f^\prime_t\|_{s}^{\frac{2}{2+s}},
$$
with Proposition \ref{prop20}-$(2)$ we obtain
\begin{equation}\label{zerda3}
\|f^\prime(t)\|_{L^\infty}\leq Ce^{-\frac{s}{2+s}t}\|f_0\|_{L^1}^{\frac{s}{2+s}}\|f^\prime(t)\|_{s}^{\frac{2}{2+s}}.
\end{equation}


Plugging this estimate into  \eqref{zerda}  we find
\begin{equation}\label{zarda88}
\|\partial_x u_1(t)\|_{L^\infty}\le Ce^{-\frac{s}{2+s}t}\|f_0\|_{L^1}^{\frac{s}{2+s}}\Big(\|f^\prime(t)\|_{s}^{\frac{2}{2+s}}+\|f^\prime(t)\|_{s}^{\frac{4+s}{2+s}}\Big).
\end{equation}
It is quite obvious  from \eqref{Imbed1} and the compactness of the support that  
$$
\|f_0\|_{L^1}\leq C\|f_0^\prime\|_{s}
$$
with $C$ a constant depending on the size of the support of $f_0.$ Set
$$
\rho(T)=\sup_{t\in[0,T]}\|f^\prime(t)\|_{s}
$$ 
then combining \eqref{JunX1} with \eqref{zerda3} and \eqref{zarda88} yields to 
$$
\rho(T)\leq C e^{C\|f_0^\prime\|_{s}^{\frac{s}{2+s}}\big([\rho(T)]^{\frac{2}{2+s}}+[\rho(T)]^{\frac{4+s}{2+s}}\big)}\mu(T)
$$
with
$$
\mu(T)=\|f_0^\prime\|_s+\|f_0^\prime\|_{s}^{\frac{s}{2+s}}[\rho(T)]^{\frac{2}{2+s}}+\|f_0^\prime\|_{s}^{\frac{ss_0}{2+s}}[\rho(T)]^{\frac{2s_0}{2+s}}\Big(\rho(T)+[\rho(T)|^{16}\Big)
$$
This implies the existence of small number $\varepsilon>0$ depending only on $C$ and therefore on the size of the support of $f_0$ such that if
\begin{equation}\label{GW1}
\|f_0^\prime\|_{s}\leq \varepsilon \Longrightarrow \forall T>0,\quad \rho(T)\leq \delta(\|f_0^\prime\|_{s})
\end{equation}
with $\lim_{x\to 0}\delta(x)=0$. 
 This gives the global a priori estimates. 
 
 What is left is to  establish the precise  time decay of $\|f^\prime(t)\|_{L^\infty}$ stated in Proposition \ref{prop-glob} . From the equation  \eqref{kar17} it is easy to establish the following estimate using the characteristic method,
\begin{equation}\label{tun44}
\|g(t)\|_{L^\infty}\leq e^{-t}\|g_0\|_{L^\infty}+\int_{0}^t e^{-(t-\tau)}\|\mathcal{R}(\tau)\|_{L^\infty}d\tau.
\end{equation} 
According to    \eqref{tun33} we obtain
 $$
 e^t\|f^\prime(t)\|_{L^\infty}\leq \|f_0^\prime\|_{L^\infty}+C \int_0^t e^\tau\|f^\prime(\tau)\|_{L^\infty}\big(\|f^\prime(\tau)\|_D+\|f^\prime\|_D^3\big)d\tau.
 $$
 
 Using Gronwall lemma we obtain
 $$
 e^t\|f^\prime(t)\|_{L^\infty}\leq \|f_0^\prime\|_{L^\infty} e^{W(t)},\quad W(t)= C\int_0^t \big(\|f^\prime(\tau)\|_D+\|f^\prime\|_D^3\big)d\tau.
 $$

Putting together  \eqref{zerda2} with \eqref{zerda3} we obtain
$$
\|f^\prime(t)\|_D\leq Ce^{-\frac{s}{4+2s}t}\|f_0\|_{L^1}^{\frac{s}{4+2s}}\|f^\prime(t)\|_{s}^{\frac{4+s}{4+2s}}.
$$
Hence we deduce from \eqref{GW1} that
$$
\forall t\geq0, W(t)\leq C_0
$$
and therefore
\begin{equation}\label{tik0}
\forall t\geq0,\quad \|f^\prime(t)\|_{L^\infty}\leq C_0 e^{-t} ,\quad \|f^\prime(t)\|_D\leq C_0e^{-\frac{s}{4+2s}t},
\end{equation}
for a  suitable constant $C_0$  depending on the initial data. Inserting these estimates into \eqref{tun33} we obtain
\begin{equation}\label{tikgh0}
\forall t\geq0,\quad \|\mathcal{R}(t)\|_{L^\infty}\leq C_0 e^{-t}.\end{equation}
 Since $f_t$ is compactly supported in a fixed compact then
\begin{equation}\label{tik1}
\forall t\geq0,\quad \|f(t)\|_{L^\infty}\leq C_1 e^{-t}.
\end{equation}
Finally, we point out  that all the constants involved in the preceding estimates and related to the support of $f_t$ are actually  independent of the time due to the fact that the support of $f_t$ is confined  in the convex hull of the support of the initial data, as it has been stated in Proposition \ref{prop20}-$(3)$.

\end{proof}

\section{Scattering and collapse to singular measure}
The aim of the last  section is to analyze and identify  the longtime  behavior of the global solutions stated  in Theorem \ref{thm2}. It attempts to investigate the time evolution of the following probability measure,\begin{eqnarray*}
 dP_t(x)&\triangleq& \frac{\rho(t,x)}{\|\rho_t\|_{L^1}}dA(x)\\
&=&e^t{\bf{1}}_{D_t}(x) dA(x)
\end{eqnarray*}
where $dA$ denotes the usual Lebesgue measure. Note that without loss of generality we have assumed in the last line  that $\|\rho_0\|_{L^1}=1.$ As we shall see this measure converges weakly  as $t$ goes to infinity to a probability measure concentrated on the real line and absolutely continuous with respect to   Lebesgue measure on the real line. The description of the density and the support of this limiting measure will be the subject of the next two sections.
\subsection{Structure of the singular   measure }
In this section we shall prove the part of Theorem \ref{thm2} dealing with the weak convergence of the measure $dP_t$ when $t$ goes to $+\infty.$ First,
it is obvious that the  probability measure $dP_t$ is absolutely continuous with respect to the Lebesgue measure. The  convergence of the family $\{dP_t, t\geq0\}$ will be done in a weak sense as follows.
 Let $\varphi\in\mathcal{D}(\R^2)$ be a test function, one can write using Fubini's theorem
\begin{eqnarray*}
 I_t&\triangleq&\int_{\R^2}\varphi(x,y)dP_t\\
 &=&e^{t}\int_{\R}\int_{-f_t(x)}^{f_t(x)}\varphi(x,y)dy.
\end{eqnarray*}
According to  Taylor expansion in the second variable one gets
$$
\forall (x,y)\in\R^2,\quad \varphi(x,y)=\varphi(x,0)+ y\psi(x,y)\quad \textnormal{and}\quad \|\psi\|_{L^\infty}\leq C.
$$
This implies that
\begin{equation}\label{meas1}
I_t=2e^t\int_{\R} f_t(x)\varphi(x,0) dx+I_t^1, \quad I_t^1\triangleq e^{t}\int_{\R}\int_{-f_t(x)}^{f_t(x)}y\psi(x,y)dy.
\end{equation}
We shall  check taht the term $I_t^1$ does not contribute in the limiting behavior. Actually it vanishes for $t$ going to infinity. Indeed,
\begin{eqnarray*}
|I_t^1|&\leq&e^t\|\psi\|_{L^\infty}\int_{\R} [f_t(x)]^2 dx.
\end{eqnarray*}
Using \eqref{tik1} and the localization of the support of $f_t$ in the convex hull of the initial support, we deduce that
$$
|I_t^1|\leq C e^{-t}
$$
and thus
$$
\lim_{t\to+\infty}I_t^1=0.
$$
Combining \eqref{graph1}, \eqref{VY1}, \eqref{tik0}, \eqref{tik1} and \eqref{GW1} we deduce that
\begin{equation}\label{vezX1}
 \partial_t f(t,x)+u_1\partial_x f(t,x)+f(t,x)=-f(t,x)R(t,x)
\end{equation}
with 
\begin{eqnarray}\label{integra1}
 \nonumber \|R(t)\|_{L^\infty}&\leq& \|f^\prime(t)\|_{D}\big(1+\|f^\prime(t)\|_{\infty}^5\big)\\
&\le& C e^{-\frac{s}{4+2s}{t}}.
\end{eqnarray}
From the characteristic method developed in studying \eqref{kar17} we get the representation
\begin{equation}\label{verd1}
e^tf\big(t,\psi(t,x)\big)=f_0(x)e^{\int_0^t R(\tau,\psi(\tau, x)) d\tau}.
\end{equation}
From the integrability property \eqref{integra1} we deduce that $\{e^t f(t,\psi(t))\}$ converges uniformly as $t$ goes to $+\infty$  to the positive function
\begin{equation}\label{zaf1}
x\mapsto f_0(x)e^{\int_0^{+\infty}R(\tau,\psi(\tau, x)) d\tau}\triangleq R_2(x).
\end{equation}
More precisely, using straightforward computations we easily get
\begin{eqnarray}\label{verd0}
\nonumber \|e^t f_t\circ\psi(t)-R_2\|_{L^\infty}&\leq&\|R_2\|_{L^\infty}\int_{t}^{+\infty}\|R(\tau)\|_{L^\infty} d\tau\\
&\leq& Ce^{-\frac{s}{4+2s}{t}} .
\end{eqnarray}
The next goal is  prove that the flow  $\psi(t)$ converges uniformly as $t$ goes to infinity  to some homeomorphism $\psi_\infty:\R\to\R$ which belongs to the bi-Lipschitz class. First, recall from the definition \eqref{flot1} that
$$
\psi(t,x)=x+\int_0^t u_1\big(\tau,\psi(\tau,x)\big) d\tau.
$$
Recall from Section \ref{Sec-graph} that $u_1(x)=v_1(x,f(x))$ and the velocity is computed from the density $\rho$ according to the second equation of \eqref{sqg}. Hence we get
$$
\|u_1(t)\|_{L^\infty} \le \|\Delta^{-1}\nabla \rho\|_{L^\infty}.
$$
Now using the  classical interpolation inequality 
$$
\|\Delta^{-1}\nabla \rho\|_{L^\infty}\le C\|\rho\|_{L^1}^{\frac12}\|\rho\|_{L^\infty}^{\frac12}
$$
combined with the decay rate stated in Proposition \ref{prop20}-$(2)$ we deduce that
\begin{equation}\label{Ra1}
\|u_1(t)\|_{L^\infty} \leq C e^{-t/2}
\end{equation}
Consequently, it follows that $\psi(t)$ converges uniformly to the function
$$
\psi_\infty(x)\triangleq x+\int_0^{+\infty} u_1\big(\tau,\psi(\tau,x)\big) d\tau.
$$
More precisely, we have
\begin{eqnarray}\label{dima1}
\nonumber\|\psi(t)-\psi_\infty\|_{L^\infty}&\leq& \int_{t}^{+\infty}\|u_1(\tau)\|_{L^\infty}d\tau\\
&\le&Ce^{-t/2}.
\end{eqnarray}
It remains to check that $\psi_\infty$ is bi-Lipschitz. First we know that
$$
\|\partial_x\psi(t)\|_{L^\infty}\leq e^{V(t)}, \quad V(t)=\int_0^t \|\partial_x u_1(\tau)\|_{L^\infty} d\tau.
$$
Using \eqref{zarda88} and \eqref{GW1} we deduce that
\begin{equation}\label{mqs1}
\forall t\geq0, \, \|\partial_x\psi(\tau)\|_{L^\infty}\leq C, \quad\|\partial_x u_1(t)\|_{L^\infty}\leq C \varepsilon^{\frac{s}{2+s}} e^{-\frac{s}{2+s}t}.
\end{equation}
Differentiating $\psi_\infty$ and using the triangle inequality we get
$$
1-\int_0^{+\infty}\|\partial_x u_1(\tau)\|_{L^\infty}\|\partial_x\psi(\tau)\|_{L^\infty} d\tau\le\psi_\infty^\prime(x)\leq 1+\int_0^{+\infty}\|\partial_x u_1(\tau)\|_{L^\infty}\|\partial_x\psi(\tau)\|_{L^\infty} d\tau.
$$
Therefore we obtain
$$
\forall x\in\R,\quad 1-C\varepsilon^{\frac{s}{2+s}}\leq\psi_\infty^\prime(x)\leq 1+C\varepsilon^{\frac{s}{2+s}}.
$$
Taking $\varepsilon$ small enough,  meaning that the initial data is very small, we get 
\begin{equation}\label{Lip1}
\forall x\in\R,\quad \frac12\leq\psi_\infty^\prime(x)\leq \frac32\cdot
\end{equation}
This shows that $\psi_\infty$ is a bi-Lipschitz function from $\R$ to $\R$. Further,  it is obvious that 
$$
\psi_\infty(x)=\psi(t,x)+\int_t^{+\infty} u_1(\tau,\psi(\tau,x))d\tau
$$
and hence
$$
\psi_\infty\big((\psi^{-1}(t,x)\big)=x+\int_t^{+\infty} u_1\big(\tau,\psi\big(\tau,\psi^{-1}(t,x)\big)\big)d\tau.
$$
Combined this identity with $\psi_\infty\circ\psi_\infty^{-1}=\hbox{Id}$ and \eqref{Ra1} yields
\begin{eqnarray*}
\big|\psi_\infty\big((\psi^{-1}(t,x)\big)-\psi_\infty(\psi_\infty^{-1}x)\big|&\leq& \int_{t}^{+\infty}\|u_1(\tau)\|_{L^\infty}d\tau\\
&\le&Ce^{-t/2}.
\end{eqnarray*}
Applying  \eqref{Lip1} we deduce that
$$
\big\|\psi^{-1}(t)-\psi_\infty^{-1}\big\|_{L^\infty}\leq C e^{-t/2}.
$$

This shows that $\psi^{-1}(t)$ converges uniformly to $\psi_\infty^{-1}$ with an exponential rate. Set 
\begin{equation}\label{Tf1}
\Phi=R_2\circ\psi_\infty^{-1}
\end{equation}
and assume for a while that $R_2$  belongs to $C^\alpha$ for any $\alpha\in(0,1)$, then we deduce from the preceding estimates, especially  \eqref{verd0}  and \eqref{verd1}, that
\begin{eqnarray*}
\|e^t f(t)-\Phi\|_{L^\infty}&\leq&\|e^t f(t)-R_2\circ\psi^{-1}(t)\|_{L^\infty}+\|R_2\circ\psi^{-1}(t)-R_2\circ\psi_\infty^{-1}\|_{L^\infty}\\
&\le& Ce^{-\frac{s}{4+2s}t}+\| R_2\|_{\alpha}\|\psi^{-1}(t)-\psi_\infty^{-1}\|_{L^\infty}^\alpha\\
&\le&
 C e^{-\frac{s}{4+2s}t}+C e^{-\alpha t/2}.
\end{eqnarray*}
Taking $\alpha=\frac{2s}{4+2s}$ we get
\begin{eqnarray}\label{Domm6}
\|e^t f(t)-\Phi\|_{L^\infty}&\leq&
 C e^{-\frac{s}{4+2s}t}.
\end{eqnarray}
Let us now check that $R_2$ belongs to $C^\alpha$ for any $\alpha\in(0,1).$ For this goal we shall express differently the  function $R_2.$ Set $R_1(t,x)=-f(t,x)R(t,x)$ then from 
 the characteristic method the solution to \eqref{vezX1} may be recovered as follows
\begin{equation*}
e^tf\big(t,\psi(t,x)\big)=f_0(x)+\int_0^t e^{\tau}R_1\big(\tau,\psi(\tau, x)\big) d\tau.
\end{equation*}
Putting together  \eqref{integra1} and \eqref{tik1} we deduce that
\begin{equation}\label{Cozz1}
\|R_1\big(\tau,\psi(\tau)\big) \|_{L^\infty}\leq C e^{-\frac{4+3s}{4+2s} t}.
\end{equation}
Therefore we find the identity
\begin{equation}\label{Fqs1}
R_2(x)=f_0(x)+\int_0^{+\infty} e^{\tau}R_1\big(\tau,\psi(\tau, x)\big) d\tau.
\end{equation}
We shall now study the regularity of $R_2$ through the use of this representation.

Differentiating in $x$ the equation \eqref{vezX1} and comparing it to the equation \eqref{kar17} we get the identity
$$
\partial_x R_1(t,x)=\mathcal{R}(t,x)+\partial_x u_1(t,x)\partial_xf(t,x).
$$
Using  \eqref{tik0}, \eqref{tikgh0} and \eqref{mqs1} we find
$$
\forall t\geq0,\quad \|\partial_x R_1(t)\|_{L^\infty}\leq Ce^{-t}.
$$
Combining this latter estimate  with  Leibniz formula and  \eqref{mqs1} implies
\begin{equation}\label{Cozz2}
\forall t\geq0,\quad \|\partial_x \big(R_1(t, \psi(t,\cdot))\big)\|_{L^\infty}\leq Ce^{-t}.
\end{equation}
It suffices now to apply the following classical interpolation inequality: for any $\alpha\in(0,1)$ there exists $C>0$ such that 
$$
\|h\|_{\alpha}\leq C\|h\|_{L^\infty}^{1-\alpha}\|h^\prime\|_{L^\infty}^{\alpha}
$$
which implies that according to \eqref{Cozz1} and \eqref{Cozz2}
\begin{equation}\label{Cozz3}
\forall t\geq0,\quad \|R_1(t, \psi(t,\cdot))\|_{\alpha}\leq Ce^{-t} e^{-t(1-\alpha)\frac{s}{4+2s}}.
\end{equation}
Returning to the identity \eqref{Fqs1}, one obtains in view of  \eqref{Cozz3}
\begin{eqnarray*}
\| R_2\|_{\alpha}&\leq& \|f_0\|_\alpha+\int_0^{+\infty} e^\tau \|R_1\big(\tau,\psi(\tau, \cdot)\big)\|_{\alpha} d\tau\\
&\leq& C,
\end{eqnarray*}
for any $\alpha\in(0,1).$ As an immediate consequence of \eqref{Tf1}, \eqref{Lip1} and \eqref{comp1} we find that $\Phi$ belongs to $C^\alpha$ for any $\alpha\in (0,1).$ We guess the profile $\Phi$ to keep the same regularity as $f_0$, that is, in $C^{1+s}$ but this could require much more refined analysis.

Now coming back to  \eqref{meas1} we find in view of \eqref{Domm6} and Lebesgue theorem
$$
\lim_{t\to+\infty} I(t)=2\int_{\R} \Phi(x)\varphi(x,0)dx.
$$
This is equivalent to write  in   the weak sense
\begin{eqnarray}\label{Mezz1}
\nonumber\lim_{t\to+\infty}dP_t&=&2\Phi\, \delta_{\R\otimes\{0\}}\\
&\triangleq& dP_\infty.
\end{eqnarray}
Now we shall discuss some properties  of $\Phi.$ From  \eqref{zaf1} and \eqref{Tf1}
 we have
\begin{equation}\label{SupX}
 \textnormal{supp } \Phi=\psi_\infty(K_0),\quad K_0=\textnormal{supp }f_0.
\end{equation}
 According to \eqref{Lip1}, the measure of $\textnormal{supp } \Phi$ is strictly positive with 
\begin{equation}\label{DomS1}
 | \textnormal{supp } \Phi|\geq \frac12 |K_0|.
\end{equation}
  It remains to check that $dP_\infty$ is a probability measure on the real axis, which reduces to verify that
 $$
2\int_{\R}\Phi(x) dx=1.
$$
First note that using Proposition \ref{prop20}-$(2)$ one obtains for any $t\geq0$,
$$
1=2\int_{\R} e^t f(t,x) dx.
$$ 
To exchange limit and integral it suffices to apply  
Lebesgue theorem thanks to the conditions  \eqref{Domm6} and and the fact that $\textnormal{supp } f_t\in \textnormal{Conv} K_0$, recall that for simplicity we have assumed that $\|\rho_0\|_{L^1}=1,$ 
$$
\lim_{t\to+\infty}\int_{\R} e^t f(t,x) dx=\int_{\R}\Phi(x)dx,
$$ 
which provides the desired result. We point out that with the normalization $\|\rho_0\|_{L^1}=1$ one gets instead of \eqref{Mezz1}
$$
dP_\infty=\frac{\Phi}{\|f_0\|_{L^1}}\delta_{\R\otimes\{0\}}
$$
which gives the structure of the limiting measure stated in Theorem \ref{thm2} thanks to \eqref{zaf1} and \eqref{Tf1}.
 \subsection{Concentration of the support}
 In this section we shall complete the study of the limiting measure $dP_\infty$  and  identify its support denoted by $K_\infty$. What is left to conclude  the proof of Theorem \ref{thm2} is just to check that the support  $D_t$ of the solution $\rho_t$ converges in the Hausdorff sense to $K_\infty.$ 
 Recall that  $K_0$ is  the support of $f_0$ and is assumed to be  is a finite collection of increasing segments $[a_i;b_i], i=1,..n,$ such that $a_i<b_i<a_{i+1}$. According to \eqref{SupX} one has
$$
 \textnormal{supp }\Phi=\psi_\infty(K_0)\triangleq K_\infty.
$$ 
Since $\Psi_\infty$ is strictly increasing due to \eqref{Lip1} one deduces easily that 
 $$
  \textnormal{supp }\Phi=\cup_{i=1}^n[a_i^\infty, b_i^\infty],\quad a_i^\infty\triangleq\psi_\infty( a_i),\quad  b_i^\infty\triangleq\psi_\infty( b_i).
 $$
 Using more again \eqref{Lip1}  one may easily obtain that 
 $$
\forall \, i,\quad  |a_i^\infty-b_i^\infty|\geq \frac12 |a_i-b_i|.
 $$
 Now to establish the convergence in the Hausdorff sense of $D_t$ towards  $K_\infty$ it suffices to prove the result for each connected component, that is,
 $$
 \forall i=1,..,n,\quad d_H\big(\Gamma^i_t,[a_i^{\infty}, b_i^\infty]\big)\leq C e^{-t}
 $$
 with 
 $$
 \Gamma^i_t\triangleq\Big\{\big(x, f_t(x)\big), x\in [a_i^t, b_i^t]  \Big\}.
 $$
 By straightforward analysis using \eqref{tik1} one obtains
 $$
 d_H\big(\Gamma^i_t,[a_i^{\infty}, b_i^\infty]\big)\le C e^{-t}+C\max\big(|a_i^t-a_i^\infty|, |b_i^t-b_i^\infty|\big).
 $$
 From \eqref{dima1} one gets
 $$
 \max\big(|a_i^t-a_i^\infty|, |b_i^t-b_i^\infty|\big)\leq C e^{-t}
 $$
and therefore 
$$
\forall t\geq 0,\quad d_H(D_t, K_\infty)\leq C e^{-t}.
$$
The proof of Theorem \ref{thm1} is now achieved.
  
 \begin{ackname}
The first author gratefully acknowledges the hospitality of Dong Li and the Hong Kong University of Science and Technology where  portions of this work were achieved.\end{ackname}

   \end{document}